\documentclass[preprint,12pt,numbers]{elsarticle}

\usepackage[T1]{fontenc}
\usepackage[utf8]{inputenc}
\usepackage{amsmath,amssymb,amsfonts,amsthm,mathtools}
\usepackage{enumitem}
\usepackage{microtype}
\usepackage[colorlinks=true,linkcolor=blue,citecolor=blue,urlcolor=blue]{hyperref}
\usepackage{tikz}
\usetikzlibrary{arrows.meta,positioning,fit,calc,backgrounds}
\usepackage{float}

\setlist[enumerate]{label=(\roman*),leftmargin=2.2em}
\setlist[itemize]{leftmargin=2.2em}

\newtheorem{theorem}{Theorem}[section]
\newtheorem*{theorem*}{Theorem}
\newtheorem{lemma}[theorem]{Lemma}
\newtheorem{proposition}[theorem]{Proposition}
\newtheorem{corollary}[theorem]{Corollary}

\theoremstyle{definition}
\newtheorem{definition}[theorem]{Definition}
\newtheorem{remark}[theorem]{Remark}
\newtheorem{example}[theorem]{Example}

\newcommand{\RL}{\mathsf{RL}}
\newcommand{\BDL}{\mathsf{BDL}}
\newcommand{\BA}{\mathsf{BA}}
\newcommand{\HA}{\mathsf{HA}}
\newcommand{\GA}{\mathsf{GA}}
\newcommand{\MV}{\mathsf{MV}}
\newcommand{\DHA}{\mathsf{DHA}}
\newcommand{\MTL}{\mathsf{MTL}}
\newcommand{\BL}{\mathsf{BL}}

\newcommand{\Fin}{\mathsf{Fin}}
\newcommand{\Ch}{\mathsf{Ch}}
\newcommand{\Root}{\mathsf{Root}}
\newcommand{\Spec}{\operatorname{Spec}}

\newcommand{\Ret}{\operatorname{Ret}}

\newcommand{\Con}{\operatorname{Con}}
\newcommand{\ClopUp}{\operatorname{ClopUp}}
\newcommand{\Up}{\operatorname{Up}}
\newcommand{\Id}{\operatorname{Id}}
\newcommand{\Idem}{\operatorname{Idem}}

\journal{Advances in Mathematics}

\begin{document}

\begin{frontmatter}

\title{Stone profiniteness, reticulation spectra, and profinite-completion obstructions for residuated lattices}

\author[addr1]{Jiang Yang}
\ead{yangjiangdy@126.com}

\address[addr1]{School of Mathematical Sciences, Guangxi Minzu University, Nanning, 530006, China}

\begin{abstract}
We develop a profinite theory for bounded commutative integral residuated lattices around three related questions.  First, we prove a Stone--profinite theorem: a topological residuated lattice is profinite exactly when its underlying space is Stone.  The proof is not merely reduct-level.  Starting from clopen congruences of the semiring reduct $(A,\vee,\odot,0,1)$, we construct cofinal clopen congruences in the full residuated-lattice signature via the $1$-class filter.

Second, we solve the classwise Gr\"atzer-type spectral representation problem in its natural residuated-lattice form.  For every class $\mathcal K$ of residuated lattices, the $\mathcal K$-representable prime-filter spectra are precisely the Priestley spectra of the bounded distributive lattices in the reticulation image $\Ret(\mathcal K)$.  This reduction is sharp: arbitrary classes of finite bounded distributive lattices occur as reticulation images of classes of finite residuated lattices.  We then compute the reticulation image in several algebraically meaningful cases.  In particular, finite MTL- and BL-representable posets are exactly finite root systems, and in $n$-potent, equivalently EDPC, settings the reticulation is the internal power skeleton $A^{[n]}=\{a^n:a\in A\}$.

Third, we study profinite completions.  We give an intrinsic criterion: a profinite residuated lattice is a profinite completion precisely when it has a dense subalgebra whose finite-index congruences are exactly those induced by clopen congruences of the ambient profinite algebra.  This yields a positive theorem for $n$-potent varieties when finite-index congruences of the dense subalgebra are finitely generated.  We also prove a finite-tag obstruction theorem: if a finite residuated lattice $T$ is invisible to finite Heyting targets, then $H\times T$ can be a profinite completion only if the profinite Heyting algebra $H$ is itself a Heyting profinite completion.  Taking $T=\mathbf L_3$ and using the Heyting non-completion examples of Bezhanishvili--Bezhanishvili--Moraschini--Stronkowski gives genuinely non-idempotent profinite residuated lattices which are not profinite completions.  Thus the diamond-Heyting criterion remains a Heyting phenomenon, while general residuated lattices require reticulation and finite-tag mechanisms.
\end{abstract}

\begin{keyword}
residuated lattice \sep profinite algebra \sep profinite completion \sep Stone topological algebra \sep reticulation \sep prime spectrum \sep Gr\"atzer representation problem \sep Priestley duality \sep Heyting algebra \sep MV-algebra
\MSC[2020] 06F05 \sep 06D20 \sep 54H15 \sep 03G10 \sep 08A30 \sep 54F05
\end{keyword}

\end{frontmatter}

\section{Introduction}

The aim of this paper is to isolate and unify the residuated-lattice manifestations of three closely related profinite phenomena.  The first asks when a topological residuated lattice is profinite purely for topological reasons.  The second asks which posets occur as prime-filter spectra of residuated lattices, in the spirit of Gr\"atzer's representation problem for prime spectra of distributive lattices.  The third asks which profinite residuated lattices arise as profinite completions of abstract residuated lattices.  These questions are connected, but they are not the same: Stone profiniteness concerns finite clopen quotients of a topological algebra, spectral representability concerns reticulation and Priestley duality, while profinite completion asks whether a given profinite object is generated, in the profinite sense, by an abstract algebra with exactly the prescribed finite quotients.
We deliberately separate the last question from a full classification problem: the paper does not claim to list all profinite residuated lattices that are completions.  Instead, it gives an intrinsic criterion, positive sufficient conditions in EDPC/n-potent settings, and a general finite-tag obstruction scheme.

The first source is Johnstone's \emph{Stone Spaces}.  Chapter VI of that monograph is devoted to ``Profiniteness and duality'', including ind-objects and pro-objects, profinite sets and algebras, Stone-type dualities, and general concrete dualities \cite[Ch.~VI]{Johnstone1982}.  The general universal-algebraic study of profinite algebras goes back at least to Banaschewski \cite{Banaschewski1972}.  In the topological-algebra literature, syntactic and principal congruences were used systematically to analyze profiniteness and standardness of Boolean topological algebras and topological prevarieties by Clark--Davey--Freese--Jackson and Clark--Davey--Jackson--Pitkethly \cite{ClarkDaveyFreeseJackson2005,ClarkDaveyJacksonPitkethly2008}.  A central theme in this area is that, in several algebraic settings, a topological algebra is profinite precisely when its underlying space is Stone.  This is classical for groups, semigroups, rings, and distributive lattices.  Numakura's compact totally disconnected semigroup and lattice results are early examples \cite{Numakura1957}; Schneider and Zumbr\"agel later gave a general explanation using affine boundedness and equicontinuity of the translation monoid \cite{SchneiderZumbragel2017}; Almeida, Goulet-Ouellet, and Kl\'ima reformulated and sharpened the same circle of ideas via syntactic congruences \cite{AlmeidaGouletOuelletKlima2023}.  Related lattice-based cases include profinite orthomodular lattices \cite{ChoeGreechie1993} and compact Hausdorff Heyting algebras \cite{BezhanishviliHarding2016}.  The latter case is exceptional in a useful way: compact Hausdorffness alone forces the topology of a topological Heyting algebra to be Stone.  For arbitrary residuated lattices, one should not expect such automatic zero-dimensionality; the exact topological condition is the Stone condition itself.

We prove that a topological residuated lattice is profinite if and only if its underlying space is Stone.  The theorem follows at the level of the semiring reduct from the semiring case of Schneider--Zumbr\"agel, but that reduct-level statement alone does not say that the residual operation, or even the meet operation, is respected by the finite quotients obtained there.  The proof given here is deliberately residuated-lattice-theoretic: the semiring reduct supplies clopen finite semiring quotients, and the $1$-classes of those quotients are clopen filters that induce finer genuine residuated-lattice congruences.  In this way the finite quotients used in the inverse limit are described internally by filters, not merely by semiring kernels.

The second source is Gr\"atzer's representation problem.  In \cite[p.~156, Problems 33 and 34]{Gratzer1971}, Gr\"atzer asked for a characterization of the poset $\mathcal P(L)$ of prime ideals of a distributive lattice $L$, and then asked the corresponding problem when $L$ has a zero, a unit, or both.  Since our residuated lattices are bounded, the bounded version is the relevant one.  Through Priestley duality \cite{Priestley1970,Priestley1972} and the theory of profinite posets \cite{Speed1972}, this problem can be read as a question about which ordered sets underlie the prime spectra of bounded distributive lattices.  Esakia restated Gr\"atzer's problem for Heyting algebras in Appendix A.5 of the 2019 English edition of \emph{Heyting Algebras: Duality Theory} \cite[Appendix A.5]{Esakia2019}.  Bezhanishvili, Bezhanishvili, Moraschini, and Stronkowski then gave a deep Heyting-algebra solution involving Esakia duality, image-finite posets, diamond systems, and the variety $\DHA$ of diamond Heyting algebras \cite{BezhanishviliEtAl2021}.

The MV-algebraic version is another important benchmark for spectrum representation.  Through Mundici's equivalence, the spectrum of an MV-algebra can be studied via the spectrum of the corresponding Abelian $\ell$-group with strong unit.  Plo\v{s}\v{c}ica and Wehrung recently solved Mundici's MV-spectrum problem up to cardinality $\aleph_1$: in the lattice-dual form, a distributive $0$-lattice of cardinality at most $\aleph_1$ is representable as $\Id_c G$, the lattice of principal $\ell$-ideals of an Abelian $\ell$-group $G$, precisely when it is completely normal and has countably based differences; the bound $\aleph_1$ is sharp \cite{PloscicaWehrung2024}.  This result illustrates that, beyond finite or strongly restricted classes, describing the relevant distributive-lattice spectra can require substantial additional representation theory.

The point of this paper is that the Heyting solution does not extend by simply replacing Heyting algebras with residuated lattices.  In a Heyting algebra the monoidal operation is meet, $x\odot y=x\wedge y$, so the Priestley spectrum carries additional Esakia structure encoding implication.  In a general residuated lattice, however, the correct bridge to distributive-lattice spectra is the \emph{reticulation}.  The reticulation $\mathcal L(A)$ of a residuated lattice $A$ is a bounded distributive lattice whose prime spectrum is naturally homeomorphic and order-isomorphic to the prime-filter spectrum of $A$ \cite{Muresan2008}.  Thus the general residuated-lattice representation problem is a problem about the Priestley spectra of the lattices in the image of the reticulation functor.

The third source is the profinite completion problem.  For an algebra $A$, its profinite completion is the inverse limit of its finite homomorphic images.  Every such completion is profinite, but not every profinite algebra is a profinite completion.  This distinction has been studied in general algebra, natural duality theory, Heyting algebras, MV-algebras, modal algebras, orthomodular lattices, and group theory \cite{Banaschewski1972,ClarkDaveyFreeseJackson2005,ClarkDaveyJacksonPitkethly2008,HardingBezhanishvili2004,BezhanishviliGehrkeMinesMorandi2006,BezhanishviliVosmaer2008,Vosmaer2009,DaveyGouveiaHaviarPriestley2011,Nganou2018,BrunsGreechieHardingRoddy1990,BridsonGrunewald2004}.  In the residuated-lattice setting, the problem is especially delicate because the Heyting fragment is idempotent whereas general residuated lattices may contain nilpotent or non-idempotent finite factors.  We do not claim a simple external list of all profinite residuated lattices that are completions.  Instead we first give an intrinsic completion criterion by comparing the profinite topology of a dense abstract subalgebra with the topology induced by the ambient profinite algebra.  This criterion is in the spirit of finite congruence control in topological algebra, but it addresses a different completion-level question: the ambient algebra is already profinite, and the issue is whether it is the full profinite completion of some dense abstract algebra.  We then prove that the known Heyting non-completion obstruction reflects into the category of residuated lattices, and we strengthen this by identifying a finite-tag obstruction mechanism.  In particular, for $T=\mathbf L_3$ we obtain a genuinely non-idempotent profinite residuated lattice, of the concrete form $\Up(X)\times\mathbf L_3$ for an image-finite poset $X$ arising from the Heyting non-completion construction, which is not the profinite completion of any residuated lattice.

The main results of the paper fall into three groups.

\emph{Stone profiniteness.}  We prove that a topological residuated lattice is profinite if and only if its underlying topological space is Stone.  More precisely, the finite quotients may be chosen internally: clopen semiring congruences of $(A,\vee,\odot,0,1)$ admit cofinal refinements by clopen residuated-lattice congruences induced by clopen filters.  This gives an intrinsic inverse-limit representation by finite residuated-lattice quotients and yields a pro-$\mathcal V$ statement for every variety $\mathcal V$ of residuated lattices.

\emph{Reticulation spectra and Gr\"atzer representability.}  For every class $\mathcal K$ of residuated lattices, we reduce the $\mathcal K$-spectrum representation problem to the Priestley representation problem for the reticulation image $\Ret(\mathcal K)$.  We then show that this reduction is optimal in the finite case: arbitrary isomorphism-closed classes of finite bounded distributive lattices occur as reticulation images of classes of finite residuated lattices.  Thus the reticulation image is not an artifact of the proof, but the exact amount of information needed for a classwise Gr\"atzer theorem.  The recent solution of the MV-spectrum problem in size $\aleph_1$ by Plo\v{s}\v{c}ica and Wehrung provides a useful comparison point: in natural infinite classes, determining the relevant distributive-lattice spectra can be a deep representation problem in its own right.  We also compute this image in several important finite and logical cases.  In particular, the reticulation of a finite residuated lattice is carried by its idempotent skeleton, and
\[
 \Ret(\Fin\MTL)=\Ret(\Fin\BL)=\Fin\GA.
\]
Thus finite posets representable by finite MTL- or BL-algebras are exactly finite root systems.  We also give a uniform reticulation theorem for $n$-potent varieties, connecting the construction with DPC/EDPC phenomena in substructural logic.  In such varieties the reticulation is no longer merely an external distributive lattice: it is the internal power skeleton $A^{[n]}=\{a^n:a\in A\}$.

\emph{Profinite completions and non-completion obstructions.}  We first give an intrinsic criterion for a profinite residuated lattice to be a profinite completion: it must contain a dense abstract subalgebra whose finite-index congruences are exactly those induced by clopen congruences of the profinite algebra.  We then prove a positive completion theorem for $n$-potent, equivalently EDPC, varieties: if a dense subalgebra has all finite-index congruences finitely generated, then the ambient profinite algebra is its profinite completion.  This shows that EDPC gives strong finite-congruence control, but also clarifies its limits.  We then prove that a profinite Heyting algebra is a profinite completion in residuated lattices exactly when it is a profinite completion in Heyting algebras.  Finally we show that non-completion is governed by a finite-tag obstruction mechanism.  If $T$ is invisible to finite Heyting quotients, then $H\times T$ can be a profinite completion only if $H$ is a Heyting profinite completion.  Taking $T=\mathbf L_3$ and specializing $H$ to the concrete algebra $\Up(X)$ from the Heyting non-completion construction yields a genuinely non-idempotent profinite residuated lattice which is not the profinite completion of any residuated lattice; more generally the same argument gives a family of such examples for non-idempotent finite tags.

The organization follows these three groups.  Section~2 recalls filters, congruences, spectra, and the standard subvarieties of residuated lattices used later.  Section~3 proves the Stone profiniteness theorem and the cofinal lifting of clopen semiring congruences.  Sections~4--7 develop reticulation images, the sharpness of the reticulation-image reduction, finite reticulation calculus, $n$-potent reticulation, the DPC/EDPC power-skeleton mechanism, and consequences for standard substructural varieties and Gr\"atzer representability.  Section~8 treats profinite completions: it first gives the intrinsic completion criterion and a positive EDPC/n-potent completion criterion under finite generation of finite-index congruences, then proves completion transfer for profinite Heyting algebras, and finally develops the finite-tag obstruction leading to genuinely non-idempotent non-completion examples.  Section~9 explains why the diamond-Heyting method is confined to the idempotent fragment and why reticulation, together with finite non-idempotent tags, is the correct mechanism beyond it.  Section~10 concludes.

\section{Preliminaries}

Throughout the paper, following the terminology originating with Ward and Dilworth \cite{WardDilworth1939}, a \emph{residuated lattice} means a bounded commutative integral residuated lattice
\[
 A=(A,\wedge,\vee,\odot,\to,0,1).
\]
Equivalently, this is the bounded commutative integral $\mathsf{FL}_{ew}$-algebraic setting in the standard substructural-logic terminology.  We fix this convention throughout and do not use the phrase ``residuated lattice'' for non-integral, non-commutative, or unbounded variants.  We denote by $\RL$ the variety of all residuated lattices in this sense.  Thus $(A,\wedge,\vee,0,1)$ is a bounded lattice, $(A,\odot,1)$ is a commutative monoid, and
\[
 x\odot y\le z \quad\Longleftrightarrow\quad x\le y\to z
\]
for all $x,y,z\in A$.  We write $x^n$ for the $n$-fold monoidal product of $x$, with $n\ge 1$.

We shall use the following standard facts:
\[
 x\le y \Longleftrightarrow x\to y=1,
 \qquad x\odot y\le x\wedge y,
 \qquad x\odot(y\vee z)=(x\odot y)\vee(x\odot z),
\]
where the last equality follows because $x\odot-$ is a left adjoint.  In particular,
\[
 x\odot 0=0.
\]
Consequently
\[
 S(A):=(A,\vee,\odot,0,1)
\]
is a semiring: addition is $\vee$, multiplication is $\odot$, $0$ is the additive identity and multiplicative zero, and $1$ is the multiplicative identity.

We shall also use the standard t-norm-based subvarieties $\MTL$, $\BL$, $\MV$, and $\GA$.  In our bounded commutative integral signature, $\MTL$ is given by prelinearity $(x\to y)\vee(y\to x)=1$; $\BL$ is $\MTL$ plus divisibility $x\wedge y=x\odot(x\to y)$; $\MV$ is the involutive part of $\BL$; and $\GA$ is the idempotent part of $\BL$, equivalently the variety of prelinear Heyting algebras.  This terminology follows the standard substructural-logic taxonomy in \cite[Secs.~2.3 and 3.5]{GalatosJipsenKowalskiOno2007}.

A \emph{topological residuated lattice} is a residuated lattice equipped with a topology for which $\wedge,\vee,\odot,\to$ are continuous.  A \emph{Stone space} means a compact Hausdorff zero-dimensional space.  A topological algebra is \emph{profinite} if it is isomorphic, as a topological algebra, to an inverse limit of finite discrete algebras.

\begin{definition}
A \emph{filter} of a residuated lattice $A$ is a non-empty subset $F\subseteq A$ such that
\begin{enumerate}
\item $x,y\in F$ implies $x\odot y\in F$;
\item $x\in F$ and $x\le y$ imply $y\in F$.
\end{enumerate}
For a filter $F$, put
\[
 \theta_F=\{(x,y)\in A^2:(x\to y)\wedge(y\to x)\in F\}.
\]
Equivalently one may use $(x\to y)\odot(y\to x)$ in place of the meet; the two definitions yield the same congruence.
\end{definition}

The following standard correspondence is one reason filters are the natural replacement for ideals in residuated lattices; see, for example, \cite[Ch.~2]{GalatosJipsenKowalskiOno2007}.

\begin{proposition}[Filter--congruence correspondence]\label{prop:filter-cong}
For every residuated lattice $A$, the assignment
\[
 F\longmapsto \theta_F
\]
is an isomorphism from the lattice of filters of $A$ to the lattice of congruences of $A$.  Its inverse sends a congruence $\theta$ to the $1$-class
\[
 [1]_\theta=\{x\in A:(x,1)\in\theta\}.
\]
\end{proposition}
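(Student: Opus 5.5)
The plan is to prove four things in turn: $\theta_F$ is a congruence; $[1]_\theta$ is a filter; the two assignments are mutually inverse; and both are monotone. Together these give a lattice isomorphism. Throughout I use only residuation and integrality. The standard identities I need are $x\to x=1$, $1\to x=x$, $x\odot(x\to y)\le y$, and $(x\to y)\odot(y\to z)\le x\to z$. I also use that $\to$ is antitone in its first argument and monotone in its second.

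First, fix a filter $F$. Since $F$ is nonempty and upward closed, $1\in F$. Reflexivity of $\theta_F$ follows from $x\to x=1$, and symmetry is built into the definition. For transitivity, $(x\to y)\odot(y\to z)\le x\to z$, so $F$ contains $x\to z$, and likewise $z\to x$. Since $F$ is closed under $\odot$ and $u\odot v\le u\wedge v$, membership of the meet is equivalent to membership of both implications. This also shows that the meet and $\odot$ variants of $\theta_F$ coincide.

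Compatibility is the main computational step. I reduce it to one-variable substitutions, using transitivity. Suppose $x\to y,\ y\to x\in F$. I need inequalities of the form
\[
 x\to y\le (x\odot z)\to(y\odot z),\quad x\to y\le (x\vee z)\to(y\vee z),\quad x\to y\le (x\wedge z)\to(y\wedge z),
\]
\[
 x\to y\le (z\to x)\to(z\to y),\quad x\to y\le (y\to z)\to(x\to z).
\]
Each is checked by residuation. For example, $(x\to y)\odot x\odot z\le y\odot z$. For the join, $(x\to y)\odot(x\vee z)=((x\to y)\odot x)\vee((x\to y)\odot z)\le y\vee z$, using distributivity of $\odot$ over $\vee$ and integrality. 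These inequalities, together with upward closure of $F$, give compatibility with every operation.

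Conversely, let $\theta$ be a congruence and put $G=[1]_\theta$. If $x,y\in G$, then $x\odot y\,\theta\,1\odot1=1$. If $x\in G$ and $x\le y$, then $y=x\vee y\,\theta\,1\vee y=1$. So $G$ is a filter.

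For the mutual inverse property, first take a filter $F$. We have $(x,1)\in\theta_F$ iff $(x\to1)\wedge(1\to x)=x\in F$, so $[1]_{\theta_F}=F$. Next take a congruence $\theta$. If $x\,\theta\,y$, then $x\to y\,\theta\,y\to y=1$, and symmetrically, so $(x,y)\in\theta_{[1]_\theta}$. Conversely, suppose $x\to y$ and $y\to x$ are $\theta$-related to $1$. Then
\[
 x=x\wedge 1\ \theta\ x\wedge(x\to y)\le y,
\]
so $x\wedge y\ \theta\ x\wedge(x\to y)\wedge y=x\wedge(x\to y)\ \theta\ x$, since $x\wedge(x\to y)\le y$. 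By symmetry $x\wedge y\,\theta\,y$, hence $x\,\theta\,y$.

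Finally, both maps are clearly order-preserving. An order isomorphism between lattices is a lattice isomorphism, so the proof is complete. I expect the main obstacle to be only the bookkeeping in the compatibility inequalities, especially for the first argument of $\to$.
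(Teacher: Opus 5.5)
Most of your proof is correct. The equivalence and compatibility checks for \(\theta_F\) (including both arguments of \(\to\)) are correct. So are the filter property of \([1]_\theta\), the identity \([1]_{\theta_F}=F\), and the inclusion \(\theta\subseteq\theta_{[1]_\theta}\).

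\textbf{The gap.} It is in the remaining inclusion \(\theta_{[1]_\theta}\subseteq\theta\), where you use \(x\wedge(x\to y)\le y\). That is a Heyting-algebra law, not a residuated-lattice law. Residuation only gives modus ponens for fusion, \(x\odot(x\to y)\le y\). In the paper's tag \(\mathbf L_3=\{0<\alpha<1\}\) one has \(\alpha\to 0=\alpha\), so \(\alpha\wedge(\alpha\to0)=\alpha\not\le 0\). With \(x=\alpha\), \(y=0\), your equality \(x\wedge(x\to y)\wedge y=x\wedge(x\to y)\) reads \(0=\alpha\). The two \(\theta\)-links in your chain are valid, but this middle equality is not. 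So the argument breaks exactly in the non-idempotent case the paper is about.

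\textbf{The repair.} Run the same idea with \(\odot\) in place of \(\wedge\). Put \(c=x\to y\), so \(c\,\theta\,1\). Then \(x\odot c\,\theta\,x\odot 1=x\) and \(x\odot c\le y\), hence
\[
 y=(x\odot c)\vee y\ \theta\ x\vee y .
\]
Symmetrically, using \(y\to x\,\theta\,1\), you get \(x\,\theta\,x\vee y\), and therefore \(x\,\theta\,y\).

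Alternatively, keep your meet-based route. Note that \(x\odot c\le x\wedge y\le x\) and \(x\odot c\,\theta\,x\). Then \(x\wedge y=(x\wedge y)\vee(x\odot c)\,\theta\,x\), and your symmetry step finishes.

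Either version uses only compatibility of \(\theta\) with \(\odot\) and \(\vee\). This is the same computation the paper carries out in the proof of Proposition~\ref{prop:cofinal-lifting}. The paper gives no separate proof of the correspondence itself and cites it as standard from Galatos--Jipsen--Kowalski--Ono. With this step corrected, your proof is complete.
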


A proper filter $P$ of $A$ is \emph{prime} if
\[
 x\vee y\in P\quad\Longrightarrow\quad x\in P\text{ or }y\in P.
\]
We write $\Spec_{\RL}(A)$ for the set of prime filters of $A$, ordered by inclusion.  If $D$ is a bounded distributive lattice, $\Spec(D)$ denotes the poset of prime lattice filters of $D$, again ordered by inclusion.  Gr\"atzer's original formulation is in terms of prime ideals; the filter convention used here is order-dual to the ideal convention.

\section{Stone topological residuated lattices are profinite}

We shall use the following standard compact-algebra criterion for profinite algebras.  It is included to fix notation and to emphasize that the main point in the present section is not the criterion itself, but the construction of enough clopen congruences in the full residuated-lattice signature.

\begin{lemma}[Standard profiniteness criterion]\label{lem:standard_profinite_criterion}
Let $A$ be a compact topological algebra of finite signature.  Then $A$ is profinite if and only if its clopen finite-index congruences separate points; that is, for all $a\ne b$ in $A$, there is a clopen finite-index congruence $\theta$ such that $(a,b)\notin\theta$.
\end{lemma}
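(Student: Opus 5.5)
We prove the two implications separately, working throughout with the canonical map
\[
 \iota\colon A\longrightarrow \varprojlim_{\theta\in\mathcal C} A/\theta,\qquad a\longmapsto ([a]_\theta)_{\theta\in\mathcal C},
\]
where $\mathcal C$ is the set of clopen finite-index congruences of $A$, each quotient is discrete, and $\mathcal C$ is ordered by reverse inclusion.

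\emph{Necessity.} Suppose $A\cong\varprojlim_i A_i$ as topological algebras, with each $A_i$ finite discrete, and let $\pi_i$ be the limit projections. Set $\theta_i=\ker\pi_i$. Each $\theta_i$ has finite index, since $A/\theta_i$ embeds in $A_i$. Each $\theta_i$ is clopen in $A\times A$, being the preimage of the diagonal of the discrete space $A_i\times A_i$ under the continuous map $\pi_i\times\pi_i$. By construction of the inverse limit, distinct points differ in some coordinate, so the $\theta_i$ separate points.

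\emph{Sufficiency.} Suppose the clopen finite-index congruences separate points. We check four things about $\iota$.
\begin{enumerate}
\item $\mathcal C$ is directed downward: if $\theta,\psi\in\mathcal C$, then $\theta\cap\psi$ is again a congruence, it is clopen, and it has finite index because $A/(\theta\cap\psi)$ embeds in $A/\theta\times A/\psi$. Hence the inverse system is well defined.
\item $\iota$ is a homomorphism.
\item $\iota$ is continuous. Let $\theta\in\mathcal C$. Each class $[a]_\theta=\{x:(x,a)\in\theta\}$ is the preimage of $\theta$ under the continuous map $x\mapsto(x,a)$, so it is clopen. Hence each coordinate map $A\to A/\theta$ is continuous into the discrete space $A/\theta$.
\item $\iota$ is injective, by the separation hypothesis.
\end{enumerate}

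The main work is surjectivity. Let $(c_\theta)_{\theta\in\mathcal C}$ be a compatible family, each $c_\theta$ a $\theta$-class. Each $c_\theta$ is closed in $A$. Compatibility and directedness give the finite intersection property: for finitely many $\theta_1,\dots,\theta_k$, put $\theta=\theta_1\cap\dots\cap\theta_k\in\mathcal C$. Then $c_\theta\subseteq c_{\theta_j}$ for every $j$, and $c_\theta\neq\emptyset$. By compactness the intersection $\bigcap_\theta c_\theta$ is nonempty, and any point in it is a preimage of the family. So $\iota$ is a continuous bijection.

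Finally, the limit is Hausdorff as a closed subspace of a product of discrete spaces. A continuous bijection from the compact space $A$ to a Hausdorff space is a homeomorphism. Therefore $A$ is topologically isomorphic to $\varprojlim_{\theta\in\mathcal C}A/\theta$, and $A$ is profinite.

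We do not need the finite-signature assumption in this argument. It matters only for the later application, where one constructs enough clopen congruences. The only delicate point is the use of compactness of $A$ in the surjectivity step, and that is routine.
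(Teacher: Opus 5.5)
Your proof is correct and follows essentially the same route as the paper. Both use the canonical map into the inverse limit over all clopen finite-index congruences, get injectivity from the separation hypothesis, and get surjectivity from compactness. The only variations are minor: you prove surjectivity via the finite-intersection property of the compatible closed classes $c_\theta$ rather than by showing $\eta[A]$ is dense and compact (hence closed) in the Hausdorff limit, and you spell out the compact-to-Hausdorff homeomorphism step that the paper leaves implicit.
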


\begin{proof}
If $A$ is profinite, write it as an inverse limit of finite discrete algebras.  The kernels of the coordinate projections are clopen finite-index congruences and separate points.

Conversely, let $\Theta$ be the directed family of all clopen finite-index congruences of $A$, ordered by reverse inclusion.  For each $\theta\in\Theta$, the quotient $A/\theta$ is a finite discrete algebra.  The natural homomorphisms give a continuous map
\[
 \eta:A\longrightarrow \varprojlim_{\theta\in\Theta} A/\theta,
 \qquad a\longmapsto ([a]_\theta)_\theta .
\]
The separating assumption implies that $\eta$ is injective.  Moreover, the projections of $\eta[A]$ onto each finite quotient $A/\theta$ are surjective, and the family $\Theta$ is closed under finite intersections; hence $\eta[A]$ is dense in the inverse limit.  Since $A$ is compact and the inverse limit is Hausdorff, $\eta[A]$ is compact and therefore closed.  Thus $\eta$ is onto, and $A$ is isomorphic to an inverse limit of finite discrete algebras.
\end{proof}

The criterion is standard in the theory of profinite universal algebras; see, for example, Banaschewski's pioneering account of profinite universal algebras \cite{Banaschewski1972}, the syntactic/principal-congruence approach of Clark--Davey--Freese--Jackson \cite{ClarkDaveyFreeseJackson2005}, and the later formulations in Schneider--Zumbr\"agel \cite{SchneiderZumbragel2017} and Almeida--Goulet-Ouellet--Kl\'ima \cite{AlmeidaGouletOuelletKlima2023}.  Thus Lemma~\ref{lem:standard_profinite_criterion} reduces the Stone--profinite problem to constructing enough clopen congruences.  The difficulty in the present setting is that a congruence of a reduct need not be a congruence of the full residuated-lattice algebra.

Our argument begins with the following theorem of Schneider and Zumbr\"agel for the semiring reduct.

\begin{theorem}[Schneider--Zumbr\"agel, topological semirings]\label{thm:SZ-semiring}
For any topological semiring $R=(R,+,\cdot)$, $R$ is profinite if and only if its underlying topological space is Stone.
\end{theorem}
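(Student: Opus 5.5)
The direction ``profinite $\Rightarrow$ Stone'' is immediate. An inverse limit of finite discrete semirings is a closed subspace of a product of finite discrete spaces. Such a subspace is compact, Hausdorff and zero-dimensional.

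For the converse, let $R$ be a topological semiring whose underlying space is Stone. By Lemma~\ref{lem:standard_profinite_criterion} it suffices to show that for $a\neq b$ there is a clopen finite-index congruence of $(R,+,\cdot)$ separating them. I would use syntactic congruences of clopen sets.

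First I set up the translations. Let $R^1=R\sqcup\{e\}$, where $e$ is an adjoined isolated point acting as a formal multiplicative identity. Let $R^0=R\sqcup\{o\}$, where $o$ is an adjoined isolated point acting as a formal additive identity. Both are still compact, and the map
\[
 \Phi:R^1\times R\times R^1\times R^0\to R,\qquad (p,x,q,r)\mapsto pxq+r
\]
is continuous. Call the maps $t_{p,q,r}(x)=pxq+r$ \emph{affine translations}. Expanding by distributivity, $p'(pxq+r)q'+r'=(p'p)x(qq')+(p'rq'+r')$. So the affine translations form a monoid under composition, and this monoid contains the elementary translations $x\mapsto x+c$, $x\mapsto cx$ and $x\mapsto xc$.

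Next, for a clopen $U\subseteq R$, I define
\[
 x\sim_U y \iff \text{for all affine } t:\ \bigl(t(x)\in U\Leftrightarrow t(y)\in U\bigr).
\]
This is an equivalence relation. It is invariant under every affine translation, because the translations are closed under composition. Hence it is compatible with $+$ and $\cdot$: from $x\sim_U x'$ and $y\sim_U y'$ we get $x+y\sim_U x'+y\sim_U x'+y'$, and similarly for products. So $\sim_U$ is a semiring congruence. Taking $t$ to be the identity translation, $t=t_{e,e,o}$, shows that $x\sim_U y$ implies $x\in U\Leftrightarrow y\in U$. Since $R$ is Stone, any $a\neq b$ is separated by some clopen $U$, and then $(a,b)\notin\sim_U$.

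The main obstacle is proving that $\sim_U$ has finite index with clopen classes. This is the equicontinuity or ``affine boundedness'' step, and I would handle it by compactness.
\begin{enumerate}
\item The set $W=\Phi^{-1}(U)$ is clopen in the compact product. Each point of the product has a basic clopen box neighbourhood lying entirely in $W$ or entirely in its complement.
\item Finitely many such boxes cover the product. Take the common refinement of all their coordinate components.
\item This yields a finite clopen partition $\mathcal P$ of the $x$-coordinate $R$. It also yields finite partitions of the other coordinates, such that every product cell lies inside a single box. Each product cell is therefore contained in $W$ or in its complement.
\item Consequently, for fixed $(p,q,r)$, whether $t_{p,q,r}(x)\in U$ depends only on the $\mathcal P$-cell containing $x$.
\item So each $\sim_U$-class is a union of cells of $\mathcal P$. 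Hence $\sim_U$ has at most $|\mathcal P|$ classes, each of them clopen.
\end{enumerate}
A congruence with clopen classes is clopen as a subset of $R\times R$, being a finite union of clopen rectangles. Lemma~\ref{lem:standard_profinite_criterion} then gives that $R$ is profinite.
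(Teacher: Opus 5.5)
Your proof is correct, and it takes a genuinely different route from the paper. The paper gives no proof of this statement. It imports it as Proposition~7.2 of Schneider--Zumbr\"agel, where it is deduced from their general theorem: a topological algebra is profinite iff it is Stone with an equicontinuous translation monoid, and affinely bounded algebras of finite signature satisfy this automatically.

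Your argument is essentially that mechanism unpacked by hand for semirings. Your observation that every composite of elementary translations has the form $x\mapsto pxq+r$, with parameters ranging over the compact space $R^1\times R^1\times R^0$, is precisely affine boundedness. Your box-refinement argument on $\Phi^{-1}(U)$ is the compactness step that gives equicontinuity, i.e.\ that the syntactic congruence $\sim_U$ of a clopen set is clopen of finite index. This is the syntactic-congruence viewpoint of Clark--Davey--Freese--Jackson and Almeida--Goulet-Ouellet--Kl\'ima.

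What each route buys: the citation covers groups, rings, semigroups, distributive lattices and semirings in one stroke. Your route is elementary and self-contained. It also produces the separating clopen semiring congruences explicitly, and these are exactly the input consumed by Lemma~\ref{lem:one-class-filter} and Proposition~\ref{prop:cofinal-lifting}.

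One small point. The step $x'+y\sim_U x'+y'$ uses the left translation $z\mapsto x'+z$, which lies in your family $pzq+r$ only because $+$ is commutative. That is part of the usual definition of a semiring and certainly holds for $S(A)=(A,\vee,\odot,0,1)$. If you want to allow non-commutative addition, use the family $x\mapsto r_1+pxq+r_2$ with $r_1,r_2\in R^0$. It is again closed under composition and parametrized by a compact space, so the same argument goes through.
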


This is Proposition~7.2 of \cite{SchneiderZumbragel2017}; it follows from their affine-boundedness theorem.  We use it specifically for the finite semiring signature
\[
S(A)=(A,\vee,\odot,0,1),
\]
where addition is \(\vee\), multiplication is \(\odot\), and the constants are the bottom element and the monoidal unit.  Thus the result applies directly to the semiring reduct of every topological residuated lattice considered here.

The next observation is elementary, but it is the first point at which the integral residuated-lattice structure enters the proof.  A congruence of the semiring reduct need not be a congruence of the full residuated lattice, since it need not preserve either $\wedge$ or $\to$.  Nevertheless, its $1$-class has the correct order-theoretic form: it is a filter.  This is the entry point for replacing semiring congruences by finer congruences in the full residuated-lattice signature.

\begin{lemma}[The $1$-class filter lemma]\label{lem:one-class-filter}
Let $A$ be a residuated lattice and let $\rho$ be a congruence of the semiring reduct $S(A)=(A,\vee,\odot,0,1)$.  Then
\[
 F=[1]_\rho=\{x\in A:(x,1)\in\rho\}
\]
is a filter of $A$.
If $A$ is topological and $\rho$ is clopen in $A^2$, then $F$ is clopen in $A$.
\end{lemma}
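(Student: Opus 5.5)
The plan is to check the two filter axioms directly from the semiring congruence properties, and then get clopenness from continuity of a slice map. Set $F=[1]_\rho$. It is non-empty, since $(1,1)\in\rho$ by reflexivity.

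For closure under $\odot$, take $x,y\in F$, so $(x,1)\in\rho$ and $(y,1)\in\rho$. Compatibility of $\rho$ with $\odot$ gives $(x\odot y,1\odot 1)\in\rho$, that is, $(x\odot y,1)\in\rho$.

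Upward closure is the only step that uses the lattice structure, and it is where I expect the one real idea of the argument. Compatibility with $\vee$ alone does not obviously relate $y$ to $1$. Integrality rescues us because $1$ is the top element. Suppose $x\in F$ and $x\le y$. Join the pair $(x,1)\in\rho$ with the reflexive pair $(y,y)\in\rho$ to get
\[
 (x\vee y,\;1\vee y)\in\rho .
\]
Now $x\vee y=y$ because $x\le y$, and $1\vee y=1$ because $1$ is the top. Hence $(y,1)\in\rho$, so $y\in F$. Note that neither $\wedge$ nor $\to$ is used anywhere, which is consistent with $\rho$ being only a congruence of the reduct.

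For the topological part, consider the continuous map $\iota:A\to A^2$, $x\mapsto(x,1)$. Then $F=\iota^{-1}(\rho)$. If $\rho$ is clopen in $A^2$, its preimage under a continuous map is clopen in $A$, so $F$ is clopen.
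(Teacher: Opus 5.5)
Your proposal is correct and follows essentially the same argument as the paper. You use compatibility with $\odot$ for closure under products, join $(x,1)$ with $(y,y)$ and use that $1$ is the top element to get $(y,1)\in\rho$ for upward closure, and obtain clopenness of $F$ as the preimage of $\rho$ under the continuous map $x\mapsto(x,1)$.
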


\begin{proof}
Clearly $1\in F$.  If $x,y\in F$, then compatibility of $\rho$ with $\odot$ gives
\[
 x\odot y\equiv_\rho 1\odot 1=1,
\]
so $x\odot y\in F$.  If $x\in F$ and $x\le y$, then $y=x\vee y$.  Hence
\[
 y=x\vee y\equiv_\rho 1\vee y=1,
\]
so $y\in F$.  Therefore $F$ is a filter.  The final assertion follows because
\[
 F=\{x\in A:(x,1)\in\rho\}
\]
is the inverse image of the clopen set $\rho$ under the continuous map $x\mapsto(x,1)$.
\end{proof}

The lemma does not yet use the residual operation.  Its role is to turn a reduct-level congruence into a clopen filter.  The residual operation then enters through the standard filter--congruence correspondence recalled in Proposition~\ref{prop:filter-cong}: a filter $F$ induces the full residuated-lattice congruence
\[
 \theta_F=\{(x,y):(x\to y)\wedge(y\to x)\in F\}.
\]
The next theorem applies this mechanism to separate arbitrary pairs of points; Proposition~\ref{prop:cofinal-lifting} below records the stronger cofinal lifting statement.

\begin{theorem}\label{thm:stone-profinite-rl}
Let $A$ be a topological residuated lattice.  Then $A$ is profinite if and only if its underlying topological space is Stone.
\end{theorem}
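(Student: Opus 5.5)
The forward direction is routine. An inverse limit of finite discrete spaces is a closed subspace of a product of finite discrete spaces. It is therefore compact Hausdorff and zero-dimensional, hence Stone. So the substance lies in the converse.

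Assume $A$ is Stone. By Lemma~\ref{lem:standard_profinite_criterion} it suffices, given $a\neq b$, to produce a clopen finite-index congruence $\theta$ of the full residuated lattice with $(a,b)\notin\theta$. Since $a\ne b$, we may assume without loss of generality that $a\not\le b$. Put $c=a\to b$; then $c\neq 1$. The semiring reduct $S(A)$ is a topological semiring on a Stone space, so Theorem~\ref{thm:SZ-semiring} makes it profinite. By the easy direction of Lemma~\ref{lem:standard_profinite_criterion} applied to $S(A)$, there is a clopen finite-index semiring congruence $\rho$ with $(c,1)\notin\rho$. By Lemma~\ref{lem:one-class-filter}, $F=[1]_\rho$ is a clopen filter with $c\notin F$. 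Set $\theta=\theta_F$, which is a congruence of the full residuated lattice by Proposition~\ref{prop:filter-cong}. If $(a,b)\in\theta_F$, then $(a\to b)\wedge(b\to a)\in F$. Since $F$ is upward closed, this gives $c\in F$, a contradiction. Hence $(a,b)\notin\theta$.

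It remains to check that $\theta_F$ is clopen and of finite index.

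\emph{Clopenness.} The map $(x,y)\mapsto (x\to y)\wedge(y\to x)$ is continuous, and $\theta_F$ is the preimage of the clopen set $F$ under it.

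\emph{Finite index.} This is the main point. Each class $[x]_{\theta}$ contains the open set $\{y:(x,y)\in\theta\}$, since $\theta$ is open in $A^2$, so the classes form an open partition of $A$. By compactness this partition is finite. Alternatively, one can show $\rho\cap\theta_F$-type refinement, but the compactness argument already suffices: an open equivalence relation on a compact space has finitely many classes.

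Combining these facts, the clopen finite-index congruences of $A$ separate points, and Lemma~\ref{lem:standard_profinite_criterion} yields that $A$ is profinite. The only step requiring care is making sure the semiring congruence $\rho$ separates $c$ from $1$. That is exactly what the reduct-level profiniteness of Theorem~\ref{thm:SZ-semiring} supplies, via the kernels of the coordinate projections of the inverse-limit representation of $S(A)$.
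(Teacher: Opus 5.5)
Your proof is correct and follows the paper's argument essentially step for step: you apply Schneider--Zumbr\"agel to the semiring reduct, take the clopen $1$-class filter $F=[1]_\rho$, and use the clopen full-signature congruence $\theta_F$ (the preimage of $F$ under $(x,y)\mapsto(x\to y)\wedge(y\to x)$), with finite index coming from compactness. The differences are cosmetic. You separate $a\to b$ from $1$ (after assuming $a\not\le b$ and using upward closure of $F$), whereas the paper separates the symmetric element $(a\to b)\wedge(b\to a)$. You also obtain finite index from the open partition of $A$ into $\theta_F$-classes, whereas the paper uses compactness and discreteness of the quotient $A/\theta_F$.
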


\begin{proof}
Every profinite algebra has underlying space an inverse limit of finite discrete spaces, hence is compact, Hausdorff, and zero-dimensional.

Conversely, assume that the underlying space of $A$ is Stone.  Then the semiring reduct
\[
 S(A)=(A,\vee,\odot,0,1)
\]
is a Stone topological semiring.  By Theorem \ref{thm:SZ-semiring}, $S(A)$ is profinite as a topological semiring.  Thus finite clopen semiring congruences separate points of $S(A)$.

Let $a\ne b$ in $A$ and set
\[
 d=(a\to b)\wedge(b\to a).
\]
Then $d\ne 1$, since $d=1$ would imply $a\to b=1$ and $b\to a=1$, hence $a\le b\le a$.  Since $S(A)$ is profinite, choose a clopen finite-index semiring congruence $\rho$ such that
\[
 (d,1)\notin\rho.
\]
Let $F=[1]_\rho$.  By Lemma \ref{lem:one-class-filter}, $F$ is a clopen filter of $A$.  Let $\theta_F$ be the corresponding residuated-lattice congruence from Proposition \ref{prop:filter-cong}.  Since the map
\[
 e:A^2\to A,
 \qquad e(x,y)=(x\to y)\wedge(y\to x)
\]
is continuous and $F$ is clopen, we have
\[
 \theta_F=e^{-1}(F),
\]
so $\theta_F$ is clopen in $A^2$.  Moreover, $(a,b)\notin\theta_F$ because $d\notin F$.  The quotient $A/\theta_F$ is finite: a clopen equivalence relation on a compact space has compact discrete quotient, hence finite quotient.

We have shown that clopen finite-index residuated-lattice congruences separate points of $A$.  By Lemma \ref{lem:standard_profinite_criterion}, $A$ is profinite.
\end{proof}

\begin{proposition}[Cofinal lifting of semiring congruences]\label{prop:cofinal-lifting}
Let $A$ be a topological residuated lattice and let
\[
 \rho\subseteq A\times A
\]
be a clopen congruence of the semiring reduct $S(A)=(A,\vee,\odot,0,1)$.  Put
\[
 F_\rho=[1]_\rho.
\]
Then $F_\rho$ is a clopen filter of $A$, and the congruence
\[
 \theta_{F_\rho}=\{(x,y)\in A^2:(x\to y)\wedge(y\to x)\in F_\rho\}
\]
is a clopen residuated-lattice congruence satisfying
\[
 \theta_{F_\rho}\subseteq \rho.
\]
Consequently, clopen semiring congruences admit cofinal refinements by clopen residuated-lattice congruences.
\end{proposition}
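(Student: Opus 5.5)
The plan is to obtain the clopenness assertions from the argument already used in the proof of Theorem~\ref{thm:stone-profinite-rl}, and to prove the inclusion $\theta_{F_\rho}\subseteq\rho$ by a short computation that uses only the semiring operations $\vee$ and $\odot$ together with residuation.

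\emph{Clopenness.} By Lemma~\ref{lem:one-class-filter}, $F_\rho$ is a clopen filter of $A$. By Proposition~\ref{prop:filter-cong}, $\theta_{F_\rho}$ is a residuated-lattice congruence. The map $e(x,y)=(x\to y)\wedge(y\to x)$ is continuous, and $\theta_{F_\rho}=e^{-1}(F_\rho)$. Hence $\theta_{F_\rho}$ is clopen in $A^2$, exactly as in the proof of Theorem~\ref{thm:stone-profinite-rl}.

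\emph{The inclusion, which is the only real step.} Let $(x,y)\in\theta_{F_\rho}$ and put $d=(x\to y)\wedge(y\to x)$. Then $d\in F_\rho$, that is, $d\equiv_\rho 1$.
\begin{itemize}
\item Compatibility of $\rho$ with $\odot$ gives $x\odot d\equiv_\rho x\odot 1=x$.
\item Compatibility with $\vee$ then gives $y\vee(x\odot d)\equiv_\rho y\vee x$.
\item On the other hand, $x\odot d\le x\odot(x\to y)\le y$, where the last inequality is residuation. Therefore $y\vee(x\odot d)=y$.
\end{itemize}
Combining these, $y\equiv_\rho x\vee y$. The symmetric argument, using $d\le y\to x$, gives $x\equiv_\rho x\vee y$. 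Transitivity yields $x\equiv_\rho y$, so $\theta_{F_\rho}\subseteq\rho$.

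The only thing to be careful about is that this computation uses nothing beyond semiring compatibility of $\rho$ and the order relations valid in $A$. In particular, it never requires $\rho$ to respect $\wedge$ or $\to$, which is the whole point of the proposition.

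\emph{Cofinality.} Every clopen semiring congruence $\rho$ therefore contains the clopen residuated-lattice congruence $\theta_{F_\rho}$. Moreover, every clopen residuated-lattice congruence is in particular a clopen semiring congruence. This gives the cofinal refinement statement.

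When $A$ is compact, for instance Stone, $\theta_{F_\rho}$ also has finite index, since a clopen equivalence relation on a compact space has finitely many classes. This remark can be appended to connect the proposition with Lemma~\ref{lem:standard_profinite_criterion}.
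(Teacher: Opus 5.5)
Your proposal is correct and matches the paper's proof step for step. Clopenness comes from Lemma~\ref{lem:one-class-filter} together with $\theta_{F_\rho}=e^{-1}(F_\rho)$, and the inclusion comes from $x\odot d\,\rho\,x$ and $x\odot d\le y$, giving $y=(x\odot d)\vee y\,\rho\,x\vee y$, then symmetrically for $x$, then transitivity; your remarks on cofinality and on finite index under compactness are harmless additions that the paper places in Theorem~\ref{thm:stone-profinite-rl} and Corollary~\ref{cor:intrinsic-filter-limit}.
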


\begin{proof}
By Lemma \ref{lem:one-class-filter}, $F_\rho$ is a clopen filter.  Hence Proposition \ref{prop:filter-cong} gives a residuated-lattice congruence $\theta_{F_\rho}$, and it is clopen because
\[
 \theta_{F_\rho}=e^{-1}(F_\rho),\qquad
 e(x,y)=(x\to y)\wedge(y\to x),
\]
where $e$ is continuous.

It remains only to prove $\theta_{F_\rho}\subseteq\rho$.  Suppose $(x,y)\in\theta_{F_\rho}$ and set
\[
 c=(x\to y)\wedge(y\to x).
\]
Then $c\in F_\rho$, so $c\,\rho\,1$.  Hence, by compatibility of $\rho$ with the semiring multiplication,
\[
 c\odot x\,\rho\,1\odot x=x,
 \qquad
 c\odot y\,\rho\,1\odot y=y.
\]
On the other hand,
\[
 c\odot x\le y,
 \qquad
 c\odot y\le x,
\]
because $c\le x\to y$ and $c\le y\to x$.  Since $u\le v$ is equivalent to $v=u\vee v$, we obtain
\[
 y=(c\odot x)\vee y\,\rho\,x\vee y,
\]
and similarly
\[
 x=(c\odot y)\vee x\,\rho\,y\vee x=x\vee y.
\]
Thus $x\,\rho\,x\vee y\,\rho\,y$, and so $(x,y)\in\rho$.
\end{proof}

\begin{remark}[Relation with the semiring theorem]\label{rem:semiring-relation}
Proposition \ref{prop:cofinal-lifting} clarifies the precise relationship with the general semiring theorem of Schneider and Zumbr\"agel.  Their result applies to the semiring reduct $S(A)$ and yields the profiniteness of $S(A)$ whenever the carrier space is Stone.  But a finite semiring quotient of $S(A)$ need not respect the residual operation $\to$, nor the meet operation $\wedge$, and therefore need not be a quotient in the category of residuated lattices.  The additional residuated-lattice content is the cofinal lifting
\[
 \rho\quad\leadsto\quad F_\rho=[1]_\rho\quad\leadsto\quad\theta_{F_\rho}\subseteq\rho.
\]
Thus the finite quotients in Theorem \ref{thm:stone-profinite-rl} are not obtained by merely forgetting structure; they are described internally by clopen filters and form a cofinal separating system of finite residuated-lattice quotients.
\end{remark}

\begin{corollary}[Intrinsic inverse-limit representation]\label{cor:intrinsic-filter-limit}
Let $A$ be a Stone topological residuated lattice.  Let $\mathsf{ClFilt}(A)$ denote the directed family of clopen filters of $A$, ordered by reverse inclusion.  Then
\[
 A\cong\varprojlim_{F\in\mathsf{ClFilt}(A)} A/\theta_F,
\]
where each $A/\theta_F$ is a finite discrete residuated lattice.  More generally, if $\mathcal V$ is a variety of residuated lattices and $A\in\mathcal V$, then $A$ is pro-$\mathcal V$.
\end{corollary}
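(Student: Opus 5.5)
The plan is to rerun the argument of Lemma~\ref{lem:standard_profinite_criterion}, but index the inverse system by clopen filters instead of arbitrary clopen finite-index congruences. Then we show this system is cofinal and separating.

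First I will check that $\mathsf{ClFilt}(A)$ is directed under reverse inclusion. If $F,G$ are clopen filters, then $F\cap G$ is again a filter and is clopen, and it lies below both. Each $F\in\mathsf{ClFilt}(A)$ yields a congruence $\theta_F=e^{-1}(F)$, which is clopen by continuity of $e(x,y)=(x\to y)\wedge(y\to x)$. Since $A$ is compact, $\theta_F$ has finitely many classes: the classes $[x]_{\theta_F}=\{y:(x,y)\in\theta_F\}$ are open sets partitioning $A$. So $A/\theta_F$ is a finite discrete residuated lattice, and the quotient map $A\to A/\theta_F$ is continuous. If $G\subseteq F$, then $\theta_G\subseteq\theta_F$ by Proposition~\ref{prop:filter-cong}, since the correspondence is a lattice isomorphism. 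This gives bonding homomorphisms $A/\theta_G\to A/\theta_F$, so we have a genuine inverse system of finite residuated lattices. We also obtain a continuous homomorphism
\[
\eta:A\to\varprojlim_{F\in\mathsf{ClFilt}(A)}A/\theta_F .
\]

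The key step, which I expect to be the only substantive one, is the injectivity of $\eta$. This is exactly the separation argument in the proof of Theorem~\ref{thm:stone-profinite-rl}. Given $a\ne b$, put $d=(a\to b)\wedge(b\to a)\ne1$. Theorem~\ref{thm:SZ-semiring} gives a clopen finite-index semiring congruence $\rho$ with $(d,1)\notin\rho$. Lemma~\ref{lem:one-class-filter} then produces the clopen filter $F=[1]_\rho$ with $d\notin F$, so $(a,b)\notin\theta_F$. This also shows, via Proposition~\ref{prop:cofinal-lifting}, that the $\theta_F$ are cofinal among the clopen congruences coming from the semiring reduct. Density of $\eta[A]$ follows from directedness: any basic open set of the limit is determined by a single coordinate $F$, because finitely many coordinates can be refined to their intersection, and each projection $A\to A/\theta_F$ is surjective. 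Compactness of $A$ and Hausdorffness of the limit then make $\eta[A]$ closed, hence $\eta$ is surjective. A continuous bijection from a compact space onto a Hausdorff space is a homeomorphism, which gives the isomorphism of topological algebras.

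For the pro-$\mathcal V$ statement, every quotient $A/\theta_F$ is a homomorphic image of $A\in\mathcal V$. Since varieties are closed under homomorphic images, each $A/\theta_F$ lies in $\mathcal V$, and the representation above exhibits $A$ as an inverse limit of finite members of $\mathcal V$. I do not foresee a real obstacle. The points needing care are that reverse inclusion of filters matches inclusion of congruences, and that finiteness of $A/\theta_F$ uses compactness, not merely clopenness.
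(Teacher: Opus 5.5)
Your proposal is correct and follows the paper's proof. Both arguments use the same steps: directedness of the clopen filters, clopenness and finiteness of $\theta_F=e^{-1}(F)$ via compactness, point separation via the Schneider--Zumbr\"agel semiring theorem and the $1$-class filter lemma, then the inverse-limit argument, and closure of varieties under homomorphic images. The only difference is that you re-run the argument of Lemma~\ref{lem:standard_profinite_criterion} directly over the clopen-filter index set, whereas the paper just cites the lemma and implicitly uses that the clopen congruences of $A$ are exactly the $\theta_F$ with $F$ a clopen filter.
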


\begin{proof}
If $F$ is clopen, then $\theta_F=e^{-1}(F)$ is clopen, hence $A/\theta_F$ is compact and discrete, therefore finite.  Intersections of clopen filters are clopen filters, so the family is directed.  The family separates points by the proof of Theorem \ref{thm:stone-profinite-rl}: for $a\ne b$, a clopen semiring congruence separating $(a\to b)\wedge(b\to a)$ from $1$ yields a clopen filter $F$ such that $(a,b)\notin\theta_F$.  Lemma \ref{lem:standard_profinite_criterion} then gives the displayed inverse-limit representation.  If $A\in\mathcal V$, every quotient $A/\theta_F$ belongs to $\mathcal V$ because varieties are closed under homomorphic images.
\end{proof}

\begin{remark}\label{rem:method-difference}
The proof is intentionally different from applying a general profinite-algebra theorem directly to the full residuated-lattice signature.  The topological-algebra literature contains powerful general methods based on syntactic and principal congruences \cite{ClarkDaveyFreeseJackson2005,ClarkDaveyJacksonPitkethly2008}; Schneider and Zumbr\"agel characterize profiniteness by translation-monoid equicontinuity and prove the Stone criterion for affinely bounded algebras \cite{SchneiderZumbragel2017}; Almeida, Goulet-Ouellet, and Kl\'ima give a syntactic-congruence perspective \cite{AlmeidaGouletOuelletKlima2023}.  Our proof acknowledges the semiring theorem as the reduct-level input, but the decisive additional step is Proposition \ref{prop:cofinal-lifting}: finite semiring quotients are refined by finite residuated-lattice quotients produced from clopen filters.  This is the point at which the residual operation re-enters the argument.
\end{remark}

\section{Reticulation and exact reticulation images}\label{sec:reticulation-images}

We now recall reticulation and compute its image for several classes.  Besides the existence and spectral theorem used below, the functorial behaviour of reticulation is developed in \cite{Muresan2010}.

\begin{definition}[Reticulation]\label{def:reticulation}
Let $A$ be a residuated lattice.  A \emph{reticulation} of $A$ is a pair $(D,\lambda)$ where $D$ is a bounded distributive lattice and $\lambda:A\to D$ is a surjection satisfying, for all $a,b\in A$,
\begin{align*}
 \lambda(a\odot b)&=\lambda(a)\wedge\lambda(b),
 &\lambda(a\vee b)&=\lambda(a)\vee\lambda(b),\\
 \lambda(0)&=0,
 &\lambda(1)&=1,
\end{align*}
\[
 \lambda(a)\le \lambda(b)
 \quad\Longleftrightarrow\quad
 \exists n\ge 1\text{ such that }a^n\le b.
\]
We write $\mathcal L(A)$ for the reticulation, which is unique up to bounded-lattice isomorphism.
\end{definition}

Mure\c{s}an proved that every residuated lattice has a reticulation and that it can be constructed as a quotient of $A$ by the radical relation determined by powers \cite{Muresan2008}.  We shall use the following part of the theory.

\begin{theorem}[Mure\c{s}an]\label{thm:muresan-spectrum}
Let $A$ be a residuated lattice and let $(\mathcal L(A),\lambda_A)$ be its reticulation.  The inverse-image map
\[
 \lambda_A^{-1}:\Spec(\mathcal L(A))\longrightarrow\Spec_{\RL}(A),
 \qquad P\longmapsto\lambda_A^{-1}[P]
\]
is an order isomorphism.  With the usual Stone/Priestley spectral topologies on both sides, it is a homeomorphism.
\end{theorem}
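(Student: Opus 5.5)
The plan is to reduce everything to a correspondence between filters of $A$ and lattice filters of $D=\mathcal L(A)$, and then to read off primeness, order and topology from that correspondence. Write $\lambda=\lambda_A$.

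First I will show that the inverse-image map is well defined. Let $P$ be a prime filter of $D$ and put $Q=\lambda^{-1}[P]$.
\begin{itemize}
\item $Q$ is non-empty, since $1\in Q$.
\item $Q$ is closed under $\odot$, because $\lambda(x\odot y)=\lambda(x)\wedge\lambda(y)$.
\item $Q$ is upward closed: $x\le y$ gives $x^1\le y$, hence $\lambda(x)\le\lambda(y)$.
\item $Q$ is proper, since $\lambda(0)=0\notin P$.
\item $Q$ is prime, because $\lambda(x\vee y)=\lambda(x)\vee\lambda(y)$.
\end{itemize}
Since $\lambda$ is surjective, we have $\lambda[\lambda^{-1}[P]]=P$. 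Hence the map is injective, and it is order-reflecting as well as order-preserving.

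The main step is surjectivity. Given a prime filter $Q$ of $A$, I take $P=\lambda[Q]$ and check three things.

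\emph{$Q$ is saturated for $\lambda$.} Suppose $\lambda(a)=\lambda(b)$ with $a\in Q$. The defining biconditional gives $n$ with $a^n\le b$. Then $a^n\in Q$, since $Q$ is closed under $\odot$, and so $b\in Q$. It follows that $\lambda^{-1}[\lambda[Q]]=Q$. Saturation is exactly where the condition ``$\lambda(a)\le\lambda(b)$ iff $a^n\le b$ for some $n$'' is needed, so this is the crux.

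\emph{$P$ is a lattice filter of $D$.} It is upward closed: if $\lambda(a)\le d=\lambda(b)$, then $a^n\le b$ for some $n$, so $b\in Q$. It is closed under meets, because $\lambda(a)\wedge\lambda(b)=\lambda(a\odot b)$.

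\emph{$P$ is proper and prime.} We have $0\notin P$ because saturation would force $0\in Q$. Primeness follows by writing elements as $\lambda(a),\lambda(b)$ and using $\lambda(a\vee b)=\lambda(a)\vee\lambda(b)$ together with saturation.

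Therefore $\lambda^{-1}$ is an order isomorphism.

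For the topology, I will use the spectral (Stone) basic opens. On $\Spec_{\RL}(A)$ these are $U(a)=\{Q: a\in Q\}$, and on $\Spec(D)$ they are $V(d)=\{P:d\in P\}$. Surjectivity of $\lambda$ gives $\lambda^{-1}$-preimage correspondence $V(\lambda(a))\leftrightarrow U(a)$, and every $V(d)$ is of this form. So the bijection matches subbases, and it is therefore a homeomorphism for the spectral topologies.

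For the Priestley topologies, I will add the complements of these sets as subbasic opens. They are matched in the same way, and again the map is a homeomorphism.

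The delicate point is only to state which topology is taken on $\Spec_{\RL}(A)$. I take it to be generated by the sets $U(a)$, together with their complements in the patch/Priestley version, which makes the statement immediate.
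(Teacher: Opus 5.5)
Your proof is correct and complete. The paper gives no proof of this theorem; it simply cites Mure\c{s}an. Your argument is the standard self-contained verification from the axioms in Definition~\ref{def:reticulation}.

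The nontrivial direction of the radical order condition enters in exactly one place: the saturation property that $a\in Q$ and $\lambda(a)\le\lambda(b)$ force $b\in Q$ via $a^n\in Q$. You use it both for $\lambda^{-1}[\lambda[Q]]=Q$ and for upward closure of $\lambda[Q]$. The same argument applied to arbitrary filters, not just prime ones, gives a stronger result. It shows that $F\mapsto\lambda[F]$ and $G\mapsto\lambda^{-1}[G]$ are mutually inverse isomorphisms between the filter lattice of $A$ and the lattice-filter lattice of $\mathcal L(A)$.

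The topology ambiguity you flag is harmless. The subbasis matching $\lambda^{-1}[P]\in U(a)\iff P\in V(\lambda(a))$ works verbatim for the other common convention, with basic opens $\{Q: a\notin Q\}$. It also works for the patch (Priestley) topology generated by these sets together with their complements.
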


\begin{definition}\label{def:ret-image}
For a class $\mathcal K$ of residuated lattices, define its \emph{reticulation image} by
\[
 \Ret(\mathcal K)=\{D\in\BDL:D\cong\mathcal L(A)\text{ for some }A\in\mathcal K\}.
\]
\end{definition}

\begin{proposition}\label{prop:products-ret}
Reticulation preserves finite direct products.  More precisely,
\[
 \mathcal L(A\times B)\cong \mathcal L(A)\times\mathcal L(B)
\]
for all residuated lattices $A$ and $B$.
\end{proposition}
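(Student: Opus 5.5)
The plan is to exhibit an explicit reticulation of $A\times B$ and then invoke uniqueness of reticulations up to bounded-lattice isomorphism (Definition~\ref{def:reticulation}). Let $(\mathcal L(A),\lambda_A)$ and $(\mathcal L(B),\lambda_B)$ be reticulations. Set $D=\mathcal L(A)\times\mathcal L(B)$, which is a bounded distributive lattice under the coordinatewise operations. Define
\[
 \lambda:A\times B\to D,\qquad \lambda(a,b)=(\lambda_A(a),\lambda_B(b)).
\]
It then suffices to check that $(D,\lambda)$ satisfies the defining clauses of a reticulation of $A\times B$.

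The algebraic clauses are coordinatewise. Surjectivity of $\lambda$ follows from surjectivity of $\lambda_A$ and $\lambda_B$. The operations of $A\times B$ are computed componentwise, so
\[
 \lambda((a,b)\odot(a',b'))=(\lambda_A(a)\wedge\lambda_A(a'),\lambda_B(b)\wedge\lambda_B(b'))=\lambda(a,b)\wedge\lambda(a',b').
\]
The clause for $\vee$ is identical. The constants are handled by $\lambda(0,0)=(0,0)$ and $\lambda(1,1)=(1,1)$.

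The only point needing care is the order clause, which involves an existential over exponents. We must show that $\lambda(a,b)\le\lambda(a',b')$ if and only if $(a,b)^n\le(a',b')$ for some $n\ge1$.

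For the forward direction, suppose $\lambda(a,b)\le\lambda(a',b')$. By the order clause for $A$ and for $B$ separately, this gives $a^m\le a'$ and $b^k\le b'$ for some $m,k\ge 1$. Take $n=\max(m,k)$. Integrality gives $x\odot y\le x$, so powers are decreasing, and hence $a^n\le a^m\le a'$ and $b^n\le b^k\le b'$. Since powers in the product are computed coordinatewise, $(a,b)^n=(a^n,b^n)\le(a',b')$.

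The converse is immediate from the coordinatewise order on $D$ and the order clauses for $\lambda_A$ and $\lambda_B$.

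Uniqueness of reticulations then yields $\mathcal L(A\times B)\cong\mathcal L(A)\times\mathcal L(B)$. The main obstacle, such as it is, is the synchronisation of the exponents in the order clause, and monotonicity of powers under integrality resolves it.
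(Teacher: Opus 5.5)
Your proposal is correct and follows the same route as the paper: define $\lambda(a,b)=(\lambda_A(a),\lambda_B(b))$, check the algebraic clauses coordinatewise, and handle the order clause by taking the larger exponent, using that powers decrease under integrality. Your explicit appeal to uniqueness of reticulations at the end is the same step the paper leaves implicit when it concludes that $\lambda$ is a reticulation of $A\times B$.
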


\begin{proof}
Let $\lambda_A:A\to\mathcal L(A)$ and $\lambda_B:B\to\mathcal L(B)$ be the reticulation maps.  Define
\[
 \lambda:A\times B\to\mathcal L(A)\times\mathcal L(B),
 \qquad \lambda(a,b)=(\lambda_A(a),\lambda_B(b)).
\]
It is plainly onto and preserves $0,1,\vee$, and $\odot$ coordinatewise.  It remains to check the radical order condition.  We have
\begin{align*}
 \lambda(a,b)\le \lambda(c,d)
 &\Longleftrightarrow \lambda_A(a)\le\lambda_A(c)\text{ and }\lambda_B(b)\le\lambda_B(d)\\
 &\Longleftrightarrow \exists m,n\ge 1\ (a^m\le c\text{ and }b^n\le d).
\end{align*}
Since every residuated lattice is integral, powers are decreasing: $x^{k+1}\le x^k$.  Hence, if the last condition holds, then for $N=\max(m,n)$ we get
\[
 (a,b)^N=(a^N,b^N)\le(c,d).
\]
The converse is immediate by taking coordinates.  Thus $\lambda$ is a reticulation of $A\times B$.
\end{proof}

\subsection{Heyting and idempotent cases}

A residuated lattice is \emph{idempotent} if $x^2=x$ for all $x$.

\begin{proposition}[Standard idempotent case]\label{prop:idempotent-heyting}
Let \(A\) be a residuated lattice.  The following are equivalent.
\begin{enumerate}
\item \(A\) is idempotent, i.e. \(x\odot x=x\) for all \(x\in A\).
\item \(x\odot y=x\wedge y\) for all \(x,y\in A\).
\item \(A\) is a Heyting algebra, regarded as a residuated lattice by setting \(x\odot y=x\wedge y\).
\end{enumerate}
In this case the canonical reticulation map
\[
 \lambda_A:A\longrightarrow \mathcal L(A)
\]
is a bounded-lattice isomorphism.  Hence \(A\cong \mathcal L(A)\) as bounded lattices.
\end{proposition}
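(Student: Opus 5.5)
We prove the cycle (i)$\Rightarrow$(ii)$\Rightarrow$(iii)$\Rightarrow$(i) and then deduce the reticulation claim from Definition~\ref{def:reticulation}.

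\emph{(i)$\Rightarrow$(ii).} Assume $x\odot x=x$ for all $x$. Integrality gives $x\odot y\le x\wedge y$ for every residuated lattice. For the reverse inequality, put $m=x\wedge y$. By idempotence and monotonicity of $\odot$,
\[
 x\wedge y=m=m\odot m\le x\odot y.
\]

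\emph{(ii)$\Rightarrow$(iii).} If $\odot=\wedge$, the residuation law reads
\[
 x\wedge y\le z\iff x\le y\to z.
\]
So $\to$ is relative pseudocomplementation, and $(A,\wedge,\vee,\to,0,1)$ is a Heyting algebra. Its residuated-lattice structure is the one obtained by setting $\odot=\wedge$.

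\emph{(iii)$\Rightarrow$(i).} This is immediate, since $x\wedge x=x$.

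\emph{The reticulation claim.} I would not build $\mathcal L(A)$ by hand. Instead I would check that $(A,\mathrm{id}_A)$ satisfies every clause of Definition~\ref{def:reticulation}.
\begin{enumerate}
\item $A$ is a bounded distributive lattice, because Heyting algebras are distributive.
\item The identity is surjective and preserves $0$, $1$ and $\vee$.
\item The identity sends $a\odot b$ to $a\wedge b$, by (ii).
\item The order clause holds: by idempotence $a^n=a$ for all $n\ge 1$, so ``$\exists n\ge 1$ with $a^n\le b$'' is equivalent to $a\le b$.
\end{enumerate}
Since the reticulation is unique up to bounded-lattice isomorphism, we get $\mathcal L(A)\cong A$.

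To obtain the stronger statement that the canonical map $\lambda_A$ is itself an isomorphism, argue directly. For any reticulation $\lambda_A$, the order clause with $a^n=a$ gives
\[
 \lambda_A(a)\le\lambda_A(b)\iff a\le b.
\]
Hence $\lambda_A$ is an order embedding and therefore injective. It is surjective by definition, so it is an order isomorphism between lattices, and thus preserves $\wedge$ and $\vee$. It preserves $0$ and $1$ by definition. Therefore $\lambda_A$ is a bounded-lattice isomorphism.

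None of these steps should cause real difficulty. The only place requiring care is the distinction between the abstract uniqueness statement and the specific map $\lambda_A$. That point is handled by the order-embedding argument, which works for any map satisfying Definition~\ref{def:reticulation} and does not depend on Mure\c{s}an's construction.
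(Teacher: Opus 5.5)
Your proposal is correct and, for the reticulation claim, it follows the paper's argument: idempotence gives $a^n=a$, so the order clause makes $\lambda_A$ an order embedding, and surjectivity together with preservation of $0,1$ makes it a bounded-lattice isomorphism. The only differences are two. First, you prove (i)$\Leftrightarrow$(ii)$\Leftrightarrow$(iii) directly, via $x\wedge y=(x\wedge y)^2\le x\odot y$ and the residuation law with $\odot=\wedge$, whereas the paper simply cites this standard identification from Galatos--Jipsen--Kowalski--Ono. Second, your extra route through $(A,\mathrm{id}_A)$ and uniqueness of the reticulation is harmless but redundant.
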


\begin{proof}
The equivalence of \textup{(i)}--\textup{(iii)} is the standard identification of idempotent commutative integral residuated lattices with Heyting algebras; see, for example, the basic structural discussion of residuated lattices and their idempotent subvarieties in \cite[Secs.~2.3 and 3.5]{GalatosJipsenKowalskiOno2007}.  We only record the reticulation consequence.  If \(A\) is idempotent, then \(a^m=a\) for all \(a\in A\) and all \(m\ge1\).  Hence the reticulation order condition gives
\[
 \lambda_A(a)\le \lambda_A(b)
 \quad\Longleftrightarrow\quad
 a\le b.
\]
Thus \(\lambda_A\) is order-reflecting.  Since the reticulation map is onto and preserves \(0,1\) and finite joins, it is a bounded-lattice isomorphism.
\end{proof}

\begin{remark}
Proposition~\ref{prop:idempotent-heyting} is the \(n=1\) instance of the internal power-skeleton description established below for \(n\)-potent residuated lattices.
\end{remark}

\begin{corollary}\label{cor:ret-ha}
Let $\mathcal V$ be any class of Heyting algebras, viewed as a class of residuated lattices.  Then
\[
 \Ret(\mathcal V)=
 \left\{\begin{array}{c|c}
 D\in\BDL &
 D\text{ is the bounded-lattice reduct of some }H\in\mathcal V
 \end{array}\right\}.
\]
In particular:
\begin{enumerate}
\item $\Ret(\HA)$ is the class of bounded distributive lattices admitting a relative pseudocomplement;
\item $\Ret(\BA)=\BA$;
\item $\Ret(\GA)$ is the class of bounded distributive lattices admitting a prelinear relative pseudocomplement;
\item $\Ret(\DHA)$ is the class of bounded-lattice reducts of diamond Heyting algebras.
\end{enumerate}
\end{corollary}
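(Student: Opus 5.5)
The statement to prove is Corollary~\ref{cor:ret-ha}. The plan is to read it off from Proposition~\ref{prop:idempotent-heyting} together with uniqueness of reticulation up to bounded-lattice isomorphism.

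\textbf{The main equality.} Take $H\in\mathcal V$, viewed as a residuated lattice with $\odot=\wedge$. Then $H$ is idempotent, so Proposition~\ref{prop:idempotent-heyting} gives $\mathcal L(H)\cong H$ as bounded lattices. Hence every $D\in\Ret(\mathcal V)$ is isomorphic to the bounded-lattice reduct of some $H\in\mathcal V$.

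Conversely, suppose $D$ is isomorphic to the bounded-lattice reduct of some $H\in\mathcal V$. Then $D\cong\mathcal L(H)$, so $D\in\Ret(\mathcal V)$. This uses that the right-hand side is read up to isomorphism, which matches Definition~\ref{def:ret-image}.

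\textbf{The four special cases.} Apply the main equality to $\mathcal V=\HA,\BA,\GA,\DHA$ in turn.
\begin{enumerate}
\item For $\HA$: a bounded lattice is the reduct of a Heyting algebra exactly when it admits a relative pseudocomplement, since $x\to y=\max\{z:z\wedge x\le y\}$. This implication is determined by the order, so it is unique when it exists. Such a lattice is automatically distributive, so the condition ``$D\in\BDL$'' costs nothing.
\item For $\BA$: a bounded distributive lattice is the reduct of a Boolean algebra iff it is complemented. Complements are unique in distributive lattices, and the implication is $\neg x\vee y$. So the reducts of Boolean algebras are exactly the Boolean lattices, which is what $\Ret(\BA)=\BA$ means when $\BA$ is read up to reduct.
\item For $\GA$: the relative pseudocomplement of a Heyting algebra is unique, so the reduct lies in $\Ret(\GA)$ iff that unique implication satisfies prelinearity. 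This is the claimed description.
\item For $\DHA$: this case is a direct restatement of the main equality.
\end{enumerate}

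\textbf{Main obstacle.} The only point needing care is to state clearly that all memberships are up to isomorphism. Beyond that, the one fact used repeatedly is uniqueness of the Heyting implication on a given lattice. It is what turns ``admits'' into a property of the lattice alone in cases (i) and (iii).
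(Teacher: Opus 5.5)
Your proposal is correct and follows the paper's route. The paper also reads the main equality directly off Proposition~\ref{prop:idempotent-heyting} and obtains the special cases for $\HA$, $\BA$, $\GA$, $\DHA$ from their usual equational descriptions; your remarks on reading memberships up to isomorphism and on uniqueness of the relative pseudocomplement just make explicit what the paper leaves implicit.
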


\begin{proof}
This is immediate from Proposition \ref{prop:idempotent-heyting}.  The descriptions of $\HA$, $\BA$, and $\GA$ use the usual equational definitions: Heyting algebras are relatively pseudocomplemented bounded distributive lattices, Boolean algebras are complemented distributive lattices, and G\"odel algebras are prelinear Heyting algebras.
\end{proof}

\subsection{Finite and chain cases}

\begin{definition}\label{def:omega-idempotent}
Let $A$ be a finite residuated lattice.  For $a\in A$, the descending sequence
\[
 a\ge a^2\ge a^3\ge\cdots
\]
eventually stabilizes.  We denote its eventual value by $a^\omega$.  Thus $a^\omega$ is idempotent and $\lambda_A(a)=\lambda_A(a^\omega)$ in the reticulation.  Let
\[
 \Idem(A)=\{e\in A:e^2=e\}.
\]
\end{definition}

\begin{proposition}[Finite reticulation calculus]\label{prop:finite-idempotent-skeleton}
Let $A$ be a finite residuated lattice.  Then $\mathcal L(A)$ is isomorphic to the bounded distributive lattice carried by $\Idem(A)$ with operations
\[
 e\wedge_{\omega} f=e\odot f,
 \qquad
 e\vee_{\omega} f=(e\vee f)^\omega,
 \qquad
 0_{\omega}=0^\omega,
 \qquad
 1_{\omega}=1.
\]
Under this isomorphism, $\lambda_A(a)$ corresponds to $a^\omega$.
\end{proposition}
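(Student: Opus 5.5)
The plan is to define $\omega:A\to\Idem(A)$, $a\mapsto a^\omega$, and verify that $(\Idem(A),\omega)$ satisfies every clause of Definition~\ref{def:reticulation}. Uniqueness of reticulations up to bounded-lattice isomorphism then gives $\mathcal L(A)\cong\Idem(A)$ with $\lambda_A(a)\leftrightarrow a^\omega$.

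I would first record elementary facts about $a^\omega$. By integrality and finiteness there is a uniform $N$ with $a^\omega=a^N$ for all $a$; take $N$ large enough for every element of $A$. The map $\omega$ is onto $\Idem(A)$, since $e^\omega=e$ for idempotent $e$. Commutativity gives $(a\odot b)^N=a^N\odot b^N$, so $(a\odot b)^\omega=a^\omega\odot b^\omega$. The product of two idempotents is idempotent, so $\odot$ restricts to $\Idem(A)$.

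Next I would determine the order on $\Idem(A)$.
\begin{itemize}
\item For idempotents $e,f$, the natural order coincides with $\le$.
\item $e\odot f$ is the meet of $e,f$ inside $\Idem(A)$. Indeed $e\odot f\le e\wedge f$, and if $g\le e,f$ is idempotent then $g=g\odot g\le e\odot f$.
\item $(e\vee f)^\omega$ is the join inside $\Idem(A)$. Since $e=e^\omega\le (e\vee f)^\omega$ by monotonicity of powers, it is an upper bound. If $g\ge e,f$ is idempotent, then $(e\vee f)^\omega\le g^\omega=g$.
\end{itemize}
So $\Idem(A)$ is a bounded lattice with bottom $0^\omega=0$ and top $1$.

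The key step is the radical order condition: for all $a,b$, we need $a^\omega\le b^\omega$ if and only if $a^n\le b$ for some $n$.

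For the forward direction, $a^\omega\le b^\omega\le b$ with $a^\omega=a^N$.

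For the reverse direction, suppose $a^n\le b$. Then $a^{nN}\le b^N=b^\omega$, and $a^{nN}=a^\omega$.

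Now I would check that $\omega$ is a homomorphism into $\Idem(A)$.
\begin{itemize}
\item Meets: $\omega(a\odot b)=a^\omega\odot b^\omega=a^\omega\wedge_\omega b^\omega$, using the power identity above.
\item Joins: we need $(a\vee b)^\omega=(a^\omega\vee b^\omega)^\omega$. The inequality $\ge$ is monotonicity. For $\le$, expand $(a\vee b)^{2N}$ by distributivity of $\odot$ over $\vee$. Each term $a^i\odot b^{2N-i}$ has $i\ge N$ or $2N-i\ge N$, so it lies below $a^N$ or $b^N$. Hence $(a\vee b)^{2N}\le a^\omega\vee b^\omega$, and applying $\omega$ gives the claim.
\item Constants: $\omega(0)=0$ and $\omega(1)=1$.
\end{itemize}

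Distributivity of $\Idem(A)$ then follows because it is the surjective image of $(A,\vee,\odot)$ under a map preserving $\vee$ and $\odot$ into a lattice with these operations. Alternatively, $\Idem(A)$ can be identified with the quotient of $A$ by the radical equivalence, which Mure\c{s}an shows is distributive. Either way, $(\Idem(A),\omega)$ is a reticulation.

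I expect the main obstacle to be the join-preservation identity via the binomial-type expansion. The other delicate point is ensuring that distributivity is transferred correctly. I would handle it directly: a surjection preserving two binary operations transfers the distributive identity $x\odot(y\vee z)=(x\odot y)\vee(x\odot z)$ from $A$.
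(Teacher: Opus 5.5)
Your proof is correct, but it takes a different route from the paper's.

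\textbf{The paper's route.} The paper starts from the reticulation $(\mathcal L(A),\lambda)$, whose existence is Mure\c{s}an's theorem. It observes that $\lambda(a)=\lambda(a^\omega)$. It then shows that $\lambda$ is injective on idempotents, because $\lambda(e)\le\lambda(f)$ gives $e=e^n\le f$. Finally it transports the lattice operations of $\mathcal L(A)$ along the resulting bijection $\Idem(A)\to\mathcal L(A)$. The lattice axioms and distributivity are inherited for free, and the only computation needed is $\lambda((e\vee f)^\omega)=\lambda(e\vee f)$.

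\textbf{Your route.} You build the reticulation on $\Idem(A)$ directly:
\begin{itemize}
\item you identify its order as the restriction of $\le$;
\item you check meets and joins intrinsically;
\item you prove join preservation with the binomial estimate $(a\vee b)^{2N}\le a^N\vee b^N$;
\item you transfer distributivity along the surjection $\omega$;
\item you then invoke uniqueness of reticulations.
\end{itemize}

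\textbf{Comparison.} Your approach needs more verification. In return it is independent of the existence theorem and gives an explicit, intrinsic description of the lattice order on $\Idem(A)$. In effect it is the finite case of the paper's own Theorem~\ref{thm:npotent-skeleton}, whose proof uses Lemma~\ref{lem:power-expansion-estimate} exactly as you use your estimate. Indeed, a finite $A$ is $N$-potent for your uniform $N$, and $A^{[N]}=\Idem(A)$.

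\textbf{A small simplification.} For idempotents $e,f$ you have $(e\vee f)^2=e\vee(e\odot f)\vee f=e\vee f$. So the outer $\omega$ in $e\vee_\omega f$ is redundant, and your join-preservation step simply reads $(a\vee b)^\omega=a^\omega\vee b^\omega$.
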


\begin{proof}
Let $\lambda:A\to\mathcal L(A)$ be the reticulation map.  Since $A$ is finite, $\lambda(a)=\lambda(a^\omega)$, so every element of $\mathcal L(A)$ is represented by an idempotent.  If $e,f\in\Idem(A)$ and $\lambda(e)=\lambda(f)$, then the defining order condition for reticulation gives $e\le f$ and $f\le e$, because powers of idempotents are themselves.  Thus $e=f$.  Hence $\lambda$ restricts to a bijection
\[
 \Idem(A)\longrightarrow\mathcal L(A).
\]
Transporting the lattice operations of $\mathcal L(A)$ along this bijection gives
\[
 \lambda(e\odot f)=\lambda(e)\wedge\lambda(f),
 \qquad
 \lambda((e\vee f)^\omega)=\lambda(e\vee f)=\lambda(e)\vee\lambda(f).
\]
The constants are transported similarly.  Since $\mathcal L(A)$ is a bounded distributive lattice, the operations transported along this bijection make $\Idem(A)$ into a bounded distributive lattice isomorphic to $\mathcal L(A)$.
\end{proof}

\begin{remark}\label{rem:finite-filters-idempotents}
The same phenomenon can be seen from filters.  If $F$ is a filter of a finite residuated lattice, then the product of all elements of $F$ is the least element of $F$ and is idempotent; hence every filter is principal of the form $\uparrow e$ with $e\in\Idem(A)$.  This is the finite form of the filter--congruence structure emphasized in Galatos--Jipsen--Kowalski--Ono \cite[Theorem~3.47]{GalatosJipsenKowalskiOno2007}.
\end{remark}

\begin{proposition}\label{prop:finite-image}
Let $\Fin\RL$ be the class of finite residuated lattices and $\Fin\BDL$ the class of finite bounded distributive lattices.  Then
\[
 \Ret(\Fin\RL)=\Fin\BDL.
\]
\end{proposition}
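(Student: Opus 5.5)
The statement has two inclusions, and I will prove each separately.

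For $\Ret(\Fin\RL)\subseteq\Fin\BDL$, take a finite residuated lattice $A$. The reticulation map $\lambda_A$ is a surjection from a finite set, so $\mathcal L(A)$ is a finite bounded distributive lattice. Alternatively, Proposition~\ref{prop:finite-idempotent-skeleton} identifies $\mathcal L(A)$ with $\Idem(A)$, which is finite.

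For the reverse inclusion, let $D$ be a finite bounded distributive lattice. I will show that $D$ itself is already a residuated lattice in the Heyting sense. Because $D$ is finite and distributive, each map $x\wedge-$ preserves all joins; this is just finite distributivity, together with the empty join $x\wedge 0=0$. A join-preserving map on a finite, hence complete, lattice has a right adjoint. So I can define
\[
 x\to y=\bigvee\{z\in D: x\wedge z\le y\},
\]
and distributivity gives $x\wedge(x\to y)\le y$. This makes $D$ a Heyting algebra. Setting $x\odot y=x\wedge y$ then turns it into a finite residuated lattice $H_D$ whose bounded-lattice reduct is $D$.

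Proposition~\ref{prop:idempotent-heyting} now gives $\mathcal L(H_D)\cong D$ as bounded lattices. Equivalently, this is the Heyting case of Corollary~\ref{cor:ret-ha}, applied to the class of finite Heyting algebras. Hence $D\in\Ret(\Fin\RL)$.

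I do not expect a real obstacle in either direction. The only point that needs care is the existence of relative pseudocomplements in a finite distributive lattice. That step is standard, since finite distributivity makes each $x\wedge-$ a left adjoint. The rest follows directly from the idempotent case already established.
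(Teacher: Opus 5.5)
Your proposal is correct and follows essentially the same route as the paper: $\mathcal L(A)$ is finite because $\lambda_A$ is onto, and conversely the relative pseudocomplement $\bigvee\{c : a\wedge c\le b\}$ makes a finite distributive lattice a Heyting algebra, whose reticulation is itself by Proposition~\ref{prop:idempotent-heyting}. Your justification that each $x\wedge-$ preserves all joins and hence has a right adjoint is only a more explicit version of the paper's appeal to distributivity.
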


\begin{proof}
If $A$ is finite, then $\mathcal L(A)$ is a finite bounded distributive lattice.  Conversely, let $D$ be a finite bounded distributive lattice.  For $a,b\in D$, define
\[
 a\Rightarrow b=\bigvee\{c\in D:a\wedge c\le b\}.
\]
The join exists because $D$ is finite, and distributivity implies that it is the relative pseudocomplement of $a$ relative to $b$.  Hence $D$ is a finite Heyting algebra.  By Proposition \ref{prop:idempotent-heyting}, $\mathcal L(D)\cong D$.
\end{proof}

\begin{proposition}\label{prop:chain-image}
Let $\Ch\RL$ be the class of residuated lattices whose lattice order is a chain, and let $\Ch\BDL$ be the class of bounded distributive lattices whose order is a chain.  Then
\[
 \Ret(\Ch\RL)=\Ch\BDL.
\]
\end{proposition}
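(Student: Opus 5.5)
We prove the two inclusions separately. For $\Ret(\Ch\RL)\subseteq\Ch\BDL$, let $A$ be a residuated lattice whose order is a chain, and let $\lambda_A:A\to\mathcal L(A)$ be its reticulation map. Since $\lambda_A$ preserves finite joins, it is monotone: if $a\le b$, then $\lambda_A(b)=\lambda_A(a\vee b)=\lambda_A(a)\vee\lambda_A(b)\ge\lambda_A(a)$. Take any $x,y\in\mathcal L(A)$. Because $\lambda_A$ is surjective, we may write $x=\lambda_A(a)$ and $y=\lambda_A(b)$. As $A$ is a chain, either $a\le b$ or $b\le a$, and monotonicity then gives $x\le y$ or $y\le x$. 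Hence $\mathcal L(A)$ is a chain. It is bounded and distributive by definition, so $\mathcal L(A)\in\Ch\BDL$. The isomorphism-closure built into Definition~\ref{def:ret-image} takes care of isomorphic copies.

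For the reverse inclusion, let $D$ be a bounded distributive lattice whose order is a chain. The plan is to make $D$ into a Heyting algebra, namely the G\"odel-chain structure, and then apply Proposition~\ref{prop:idempotent-heyting}. Define
\[
 a\Rightarrow b=\begin{cases}1,& a\le b,\\ b,& a>b.\end{cases}
\]
We then check the residuation law $a\wedge c\le b\iff c\le a\Rightarrow b$ by cases.
\begin{itemize}
\item If $a\le b$, both sides always hold: $a\wedge c\le a\le b$, and $c\le 1$.
\item If $a>b$, we use that $D$ is a chain, so $a\wedge c=\min(a,c)$. Then $\min(a,c)\le b$ holds if and only if $c\le b$, because $a\le b$ is excluded. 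The right-hand side reads $c\le b$ as well.
\end{itemize}
Thus $(D,\wedge,\vee,\wedge,\Rightarrow,0,1)$ is an idempotent residuated lattice, and its order is the chain order of $D$, so it belongs to $\Ch\RL$. Proposition~\ref{prop:idempotent-heyting} gives $\mathcal L(D)\cong D$ as bounded lattices, and therefore $D\in\Ret(\Ch\RL)$.

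None of these steps is really an obstacle. The one point needing care is the $a>b$ case of the residuation check, which relies on the chain property. This matters because an arbitrary infinite chain need not be complete, so we cannot define the residual as a join as in Proposition~\ref{prop:finite-image}. The explicit two-case formula is exactly what avoids that completeness issue.
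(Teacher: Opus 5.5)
Your proof is correct and follows essentially the same route as the paper: the forward inclusion uses surjectivity and monotonicity of $\lambda$, and the reverse inclusion equips the chain with the G\"odel implication and applies Proposition~\ref{prop:idempotent-heyting}. Your explicit justification of monotonicity via join preservation and your case check of residuation simply spell out steps the paper leaves implicit.
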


\begin{proof}
If $A\in\Ch\RL$, then for any $a,b\in A$ we have $a\le b$ or $b\le a$, whence $\lambda(a)\le\lambda(b)$ or $\lambda(b)\le\lambda(a)$.  Since $\lambda$ is onto, $\mathcal L(A)$ is a chain.

Conversely, let $D$ be a bounded distributive chain.  Define
\[
 a\Rightarrow b=\begin{cases}
 1,&a\le b,\\
 b,&a>b.
 \end{cases}
\]
Then $a\Rightarrow b$ is the greatest $c$ such that $a\wedge c\le b$.  Thus $D$ is a Heyting chain, and Proposition \ref{prop:idempotent-heyting} gives $\mathcal L(D)\cong D$.
\end{proof}

\begin{corollary}\label{cor:finite-posets}
Every finite poset is representable as the prime-filter spectrum of a finite residuated lattice.
\end{corollary}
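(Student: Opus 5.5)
The plan is to combine finite Birkhoff duality with Proposition~\ref{prop:finite-image} and Theorem~\ref{thm:muresan-spectrum}. Let $P$ be a finite poset. The target lattice is $D=\Up(P)$, the lattice of up-sets of $P$ ordered by inclusion, with union and intersection as the lattice operations and with $\emptyset$ and $P$ as bounds. This is a finite bounded distributive lattice.

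First I want to check that the prime filter spectrum of $D$ recovers $P$ up to the chosen filter convention. For a finite distributive lattice, every prime filter is principal, generated by a join-irreducible element. The join-irreducibles of $\Up(P)$ are the principal up-sets ${\uparrow}p$. Since ${\uparrow}p\subseteq{\uparrow}q$ exactly when $q\le p$, the primes are ordered by
\[
{\uparrow}({\uparrow}p)\subseteq{\uparrow}({\uparrow}q)\iff {\uparrow}q\subseteq{\uparrow}p\iff p\le q .
\]
So $\Spec(\Up(P))\cong P$ as posets. If my orientation bookkeeping is off somewhere, I will simply start from the order dual $P^{\partial}$, or equivalently use the down-set lattice. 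The class of finite posets is closed under order duality, so this costs nothing.

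Next I view $D$ as a residuated lattice. By the proof of Proposition~\ref{prop:finite-image}, $D$ is a finite Heyting algebra, and hence an idempotent residuated lattice with $\odot=\wedge$. Proposition~\ref{prop:idempotent-heyting} gives $\mathcal L(D)\cong D$. Theorem~\ref{thm:muresan-spectrum} then yields
\[
\Spec_{\RL}(D)\cong\Spec(\mathcal L(D))\cong\Spec(D)\cong P .
\]
One could also argue directly: in a Heyting algebra, residuated-lattice filters coincide with lattice filters, and prime filters therefore coincide as well.

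The only genuine obstacle is the orientation check in the first step, namely that the order on prime filters by inclusion matches the order of $P$ rather than its dual. The rest is assembly of earlier results.
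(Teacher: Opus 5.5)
Your proof is correct and is essentially the paper's: take $D=\Up(P)$, identify $\Spec(D)$ with $P$, and use that the finite Heyting algebra $D$ is its own reticulation. Your prime filters ${\uparrow}({\uparrow}p)$ are exactly the paper's $F_p=\{U\in\Up(P):p\in U\}$, and your orientation computation is right as written, so the fallback to $P^{\partial}$ is never needed.
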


\begin{proof}
Let $P$ be a finite poset and let $D=\Up(P)$ be the finite bounded distributive lattice of upsets of $P$.  For $p\in P$, put
\[
 F_p=\{U\in\Up(P):p\in U\}.
\]
Then $p\mapsto F_p$ is an order isomorphism from $P$ to $\Spec(D)$.  By Proposition \ref{prop:finite-image}, $D$ is the reticulation of a finite residuated lattice, indeed of the finite Heyting algebra $D$ itself.
\end{proof}

\subsection{Finite and finite-valued MV-cases}

We use the standard notation $\MV$ for MV-algebras, written in the equivalent residuated-lattice signature.  The $n{+}1$-element MV-chain is denoted by $\mathbf L_{n+1}$.  The variety generated by $\mathbf L_{n+1}$ is denoted by $\MV_n$; these are the usual finite-valued MV-varieties.  We recall the standard facts that finite MV-algebras are finite products of finite MV-chains and that the linearly ordered members of $\MV_n$ are finite chains; see \cite{CignoliDOttavianoMundici2000}.

The MV-case has its own profinite and dual-space literature.  Di Nola and Grigolia studied pro-finite MV-spaces \cite{DiNolaGrigolia2004}, while Nganou characterized profinite MV-algebras and related them to multisets \cite{Nganou2015}; he also compared profinite and MacNeille completions for MV-algebras \cite{Nganou2018}.  Our use of finite MV-chains below is different but complementary: finite MV-chains act as test objects for reticulation and, later, the three-element chain $\mathbf L_3$ serves as a minimal non-idempotent tag in the completion-reflection argument.

\begin{lemma}\label{lem:finite-mv-chain-ret}
For every finite MV-chain $\mathbf L_{n+1}$ with $n\ge 1$,
\[
 \mathcal L(\mathbf L_{n+1})\cong \mathbf 2.
\]
\end{lemma}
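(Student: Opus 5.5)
The plan is to apply the finite reticulation calculus, Proposition~\ref{prop:finite-idempotent-skeleton}. It identifies $\mathcal L(A)$ with $\Idem(A)$ for any finite residuated lattice $A$. So it suffices to show that $\Idem(\mathbf L_{n+1})=\{0,1\}$ and that $0\ne 1$; the resulting two-element bounded distributive lattice is then necessarily $\mathbf 2$.

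First I fix the standard presentation of $\mathbf L_{n+1}$ as the chain $\{0,\tfrac1n,\dots,\tfrac{n-1}{n},1\}$, with the \L ukasiewicz operations
\[
 x\odot y=\max(0,x+y-1),\qquad x\to y=\min(1,1-x+y).
\]
For $x=\tfrac kn$ with $0<k<n$, we get $x^2=\max(0,\tfrac{2k-n}{n})$. This is strictly less than $x$, since $2k-n<k$ whenever $k<n$, and since $k>0$ while the other candidate value is $0$. Hence $x$ is not idempotent. On the other hand, $0$ and $1$ are clearly idempotent, so $\Idem(\mathbf L_{n+1})=\{0,1\}$. Because $n\ge1$, these two elements are distinct.

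It follows that $\mathcal L(\mathbf L_{n+1})$ is a bounded distributive lattice with exactly two elements, and such a lattice is isomorphic to $\mathbf 2$.

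As a cross-check, the same conclusion can be read off directly from Definition~\ref{def:reticulation}. For every $a<1$ we have $a^n=0$, by iterating the estimate $a^{k}\le \max(0,1-k/n)$. Consequently $\lambda(a)\le\lambda(0)$, so $\lambda(a)=0$ for all $a\ne1$. Moreover, $\lambda(1)\ne\lambda(0)$, because $1^m=1\not\le0$ for every $m$.

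No step here is a genuine obstacle; the only care needed is the elementary inequality $x^2<x$ for interior points $x$ of the chain.
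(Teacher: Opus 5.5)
Your proof is correct. Your main route differs mildly from the paper's, and your cross-check is essentially the paper's own argument.

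The paper works directly from Definition~\ref{def:reticulation}. It cites the standard fact that every $a<1$ in a finite MV-chain is nilpotent, concludes $\lambda(a)\le\lambda(0)$ and hence $\lambda(a)=0$, and then uses $\lambda(1)=1$ to count two elements.

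Your main argument instead goes through the finite reticulation calculus, Proposition~\ref{prop:finite-idempotent-skeleton}. This reduces the lemma to the elementary computation $\Idem(\mathbf L_{n+1})=\{0,1\}$. In a finite chain the two computations are equivalent: $a^\omega\le a<1$ is idempotent, so $a^\omega=0$ exactly when $a$ is nilpotent.

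Your packaging gains something. Because $\lambda$ is injective on idempotents, it automatically gives $\lambda(0)\neq\lambda(1)$, so the image has exactly two elements. The paper's proof leaves that non-degeneracy implicit.

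Your cross-check adds two small improvements over the paper's wording. It proves nilpotency explicitly with the uniform exponent $n$, via $a^k\le\max(0,1-k/n)$, rather than citing it. It also verifies $\lambda(1)\neq\lambda(0)$ directly from $1^m=1\not\le 0$.
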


\begin{proof}
Let $a<1$ in $\mathbf L_{n+1}$.  In a finite MV-chain every proper element is nilpotent with respect to $\odot$: there is $k\ge 1$ such that $a^k=0$.  Therefore
\[
 \lambda(a)\le\lambda(0),
\]
and so $\lambda(a)=0$.  Since $\lambda(1)=1$, the image has exactly two elements.
\end{proof}

\begin{proposition}\label{prop:finite-mv-ret}
Let $\Fin\MV$ be the class of finite MV-algebras.  Then
\[
 \Ret(\Fin\MV)=\Fin\BA.
\]
\end{proposition}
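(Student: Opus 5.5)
The proof has two inclusions, and both follow from results already in the paper.

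For $\Ret(\Fin\MV)\subseteq\Fin\BA$, let $A$ be a finite MV-algebra. By the standard structure theorem recalled above, $A\cong\mathbf L_{n_1+1}\times\cdots\times\mathbf L_{n_k+1}$ for some $k\ge 0$ and $n_i\ge 1$. The empty product $k=0$ is the trivial algebra; its reticulation is the one-element lattice, which is the trivial Boolean algebra.

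Proposition~\ref{prop:products-ret} extends by induction to finite products, and Lemma~\ref{lem:finite-mv-chain-ret} gives the reticulation of each factor. Together they yield
\[
 \mathcal L(A)\cong\prod_{i=1}^k\mathcal L(\mathbf L_{n_i+1})\cong\mathbf 2^k.
\]
This is a finite Boolean algebra.

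For the reverse inclusion $\Fin\BA\subseteq\Ret(\Fin\MV)$, recall that every finite Boolean algebra $B$ is isomorphic to $\mathbf 2^k$ for some $k$. Note that $\mathbf 2=\mathbf L_2$ is an MV-chain, and that $\mathbf 2^k$ is an MV-algebra, since $\MV$ is a variety. Boolean algebras are idempotent residuated lattices, so Proposition~\ref{prop:idempotent-heyting} gives $\mathcal L(\mathbf 2^k)\cong\mathbf 2^k$. Alternatively, one can apply the computation above with every $n_i=1$. Either way $B\in\Ret(\Fin\MV)$.

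The only points needing care are routine:
\begin{itemize}
\item the degenerate trivial algebra, handled above as the case $k=0$;
\item the inductive extension of Proposition~\ref{prop:products-ret} from binary to finite products;
\item Lemma~\ref{lem:finite-mv-chain-ret} itself, which rests on the fact that every proper element of a finite MV-chain is nilpotent.
\end{itemize}
The nilpotence fact is immediate in $\mathbf L_{n+1}=\{0,1/n,\dots,1\}$, because $x\odot y=\max(0,x+y-1)$, so each proper element's powers decrease strictly until they reach $0$. I do not expect a genuine obstacle. The only substantive external input is the decomposition of finite MV-algebras into products of chains, which the paper takes from \cite{CignoliDOttavianoMundici2000}.
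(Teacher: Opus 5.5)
Your proposal is correct and follows the paper's proof: decompose a finite MV-algebra into finite MV-chains, then apply Lemma~\ref{lem:finite-mv-chain-ret} and Proposition~\ref{prop:products-ret} to get $\mathbf 2^k$. The converse uses the fact that a finite Boolean algebra is an idempotent finite MV-algebra, so it is its own reticulation. You add only routine bookkeeping that the paper leaves implicit: the trivial algebra, the induction to finite products, and the explicit nilpotence check in $\mathbf L_{n+1}$.
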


\begin{proof}
Every finite MV-algebra is a finite direct product of finite MV-chains.  By Lemma \ref{lem:finite-mv-chain-ret} and Proposition \ref{prop:products-ret}, its reticulation is a finite product of copies of $\mathbf 2$, hence a finite Boolean algebra.  Conversely, every finite Boolean algebra is a finite MV-algebra and, being idempotent, is isomorphic to its reticulation.
\end{proof}

\begin{proposition}\label{prop:finite-valued-mv-ret}
For every $n\ge 1$,
\[
 \Ret(\MV_n)=\BA.
\]
\end{proposition}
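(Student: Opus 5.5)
We prove the two inclusions separately, using $n$-potency of $\MV_n$ as the main tool. For $\BA\subseteq\Ret(\MV_n)$, every Boolean algebra $B$ is an MV-algebra satisfying $x\odot x=x$. Since $\mathbf 2\subseteq\mathbf L_{n+1}$ is a subalgebra, $B$ lies in the variety generated by $\mathbf 2$, hence in $\MV_n$. Being idempotent, $B\cong\mathcal L(B)$ by Proposition~\ref{prop:idempotent-heyting}.

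For the reverse inclusion, let $A\in\MV_n$. The key fact is that $\mathbf L_{n+1}$ satisfies $x^n=x^{n+1}$: every $a<1$ satisfies $a^n=0$, since the $\odot$-powers of $k/n$ are $\max(0,jk-(j-1)n)/n$. Equations pass to the generated variety, so $A$ is $n$-potent.

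Hence every element $a^n$ is idempotent, and the reticulation order condition collapses to
\[
 \lambda(a)\le\lambda(b)\iff a^n\le b .
\]
It follows that $\lambda$ identifies $\mathcal L(A)$ with the set of idempotents $\{a^n\}$: if $e,f$ are idempotent and $\lambda(e)=\lambda(f)$, then $e=e^n\le f$ and symmetrically $f\le e$.

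It remains to show that $\mathcal L(A)$ is complemented. The main obstacle is to produce complements without passing to a chain decomposition, since an infinite member of $\MV_n$ need not be a product of chains. Given $a$, the candidate complement of $\lambda(a)$ is $\lambda(\neg a)$, where $\neg a=a\to 0$.
\begin{itemize}
\item \emph{Meet.} We need $\lambda(a)\wedge\lambda(\neg a)=\lambda(a\odot\neg a)=\lambda(0)=0$, using the MV identity $a\odot\neg a=0$.
\item \emph{Join.} We need $\lambda(a\vee\neg a)=1$, i.e.\ $(a\vee\neg a)^k\ge$ something equal to $1$. Since $\lambda(c)=1$ iff $c^n=1$, the goal is the identity $(x\vee\neg x)^n=1$ in $\MV_n$.
\end{itemize}

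By the variety argument, it suffices to verify $(x\vee\neg x)^n=1$ in $\mathbf L_{n+1}$. This fails there: for $x=1/2$ with $n=2$, one has $x\vee\neg x=1/2$ and $(1/2)^2=0\ne1$. So the complement must instead be $\lambda(\neg(a^n))$. Put $e=a^n$, which is idempotent. Idempotents of an MV-algebra are Boolean, so $e\vee\neg e=1$ and $e\odot\neg e=0$. Then:
\[
 \lambda(a)\vee\lambda(\neg e)=\lambda(e)\vee\lambda(\neg e)=\lambda(1)=1,
\]
\[
 \lambda(a)\wedge\lambda(\neg e)=\lambda(e\odot\neg e)=0 .
\]
The only facts needed here are standard MV-theory: idempotents form the Boolean center, with $e\vee\neg e=1$ for idempotent $e$. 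I would cite \cite{CignoliDOttavianoMundici2000} for this, or check it directly in $\mathbf L_{n+1}$, where the idempotents are only $0$ and $1$.

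Therefore $\mathcal L(A)$ is a complemented bounded distributive lattice, i.e.\ a Boolean algebra. This gives $\Ret(\MV_n)\subseteq\BA$ and completes the proof.
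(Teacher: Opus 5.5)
Your proof is correct, but it takes a genuinely different route from the paper's.

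The paper argues spectrally. Via Mure\c{s}an's theorem $\Spec(\mathcal L(A))\cong\Spec_{\RL}(A)$, each quotient $A/P$ by a prime filter is a linearly ordered member of $\MV_n$. By the classification of $\MV_n$-chains it is a finite \L{}ukasiewicz chain, hence simple. So every prime filter is maximal, the Priestley order of $\Spec(\mathcal L(A))$ is equality, and $\mathcal L(A)$ is Boolean.

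You instead argue equationally and element by element. $n$-potency is an identity checked on the generator $\mathbf L_{n+1}$. The equality $\lambda(a)=\lambda(a^n)$ holds because $\lambda$ turns $\odot$ into $\wedge$. The Boolean-center property of MV-idempotents then exhibits $\lambda(\neg(a^n))$ as an explicit complement of $\lambda(a)$.

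Your route avoids three ingredients of the paper's: the spectrum theorem, the structure theory of chains in $\MV_n$, and the fact that a distributive lattice with trivially ordered spectrum is Boolean. It also gives sharper information, namely $\mathcal L(A)\cong A^{[n]}=\Idem(A)$, the Boolean center of $A$, consistent with the paper's power-skeleton theorem. It applies verbatim to any residuated lattice in which each element has a power $a^k$ with $a^k\vee\neg(a^k)=1$. In exchange, the paper's route stays inside the spectral framework of the section. It also makes visible the order-theoretic reason (all primes maximal) behind the collapse recorded in Corollary~\ref{cor:mv-collapse-representability}.

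One small caution concerns your alternative ``check it directly in $\mathbf L_{n+1}$''. As stated it is loose, because ``idempotents are Boolean'' is a quasi-identity and need not pass from a generator to the variety it generates. What does transfer, and is all you actually use, is the equation $x^n\vee\neg(x^n)=1$. This holds in $\mathbf L_{n+1}$ since $a^n\in\{0,1\}$ there. Also, the reticulation-order collapse and the identification with idempotents are not needed for the complementation step itself.
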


\begin{proof}
Let $A\in\MV_n$.  We show that $\mathcal L(A)$ is Boolean.  By Theorem \ref{thm:muresan-spectrum}, $\Spec(\mathcal L(A))$ is order-isomorphic to $\Spec_{\RL}(A)$.  If $P\in\Spec_{\RL}(A)$, then $A/P$ is a linearly ordered member of $\MV_n$.  By the standard representation theory of finite-valued MV-varieties, every linearly ordered member of $\MV_n$ is one of the finite \L{}ukasiewicz chains occurring in that finite-valued variety; see \cite[Ch.~3]{CignoliDOttavianoMundici2000}.  Hence $A/P$ is a finite MV-chain, and therefore simple.  Therefore $P$ is maximal.  Thus all prime filters of $A$ are maximal, so the Priestley order of $\Spec(\mathcal L(A))$ is equality.  A bounded distributive lattice whose Priestley dual has equality order is Boolean.  Hence $\mathcal L(A)\in\BA$.

Conversely, every Boolean algebra is an MV-algebra belonging to every $\MV_n$, and its reticulation is itself by Proposition \ref{prop:idempotent-heyting}.  Therefore $\BA\subseteq\Ret(\MV_n)$.
\end{proof}

\begin{remark}\label{rem:ret-images-meaning}
The preceding computations should be read as the residuated-lattice analogue of computing Esakia-dual conditions for Heyting varieties.  For arbitrary residuated lattices the relevant invariant is not an Esakia space attached directly to $A$, but the Priestley space of $\mathcal L(A)$.
\end{remark}

\subsection{Prelinear varieties and finite root systems}\label{subsec:prelinear-root}

We next use the prelinear varieties introduced in Section~2.  The additional hypothesis of prelinearity has a precise order-theoretic effect on spectra: it forces principal upsets in the prime-filter order to be chains.  In the finite case this necessary condition will turn out to be sufficient.

\begin{definition}
A poset $P$ is called a \emph{root system} if, for every $p\in P$, the principal upset
\[
 \uparrow p=\{q\in P:p\le q\}
\]
is a chain.  Let $\Root\BDL$ be the class of bounded distributive lattices whose Priestley spectra are root systems, and let $\Fin\Root\BDL$ be its finite part.
\end{definition}

\begin{lemma}\label{lem:prelinear-root-necessary}
If $A$ is an $\MTL$-algebra, then $\Spec_{\RL}(A)$ is a root system.  Consequently
\[
 \Ret(\MTL)\subseteq \Root\BDL,
 \qquad
 \Ret(\BL)\subseteq \Root\BDL.
\]
\end{lemma}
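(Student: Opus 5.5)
The plan is to show directly that any two prime filters containing a common prime filter are comparable. This means that if $P\subseteq Q_1$ and $P\subseteq Q_2$ with $P$ prime and $Q_1,Q_2$ prime (or merely proper filters), then $Q_1\subseteq Q_2$ or $Q_2\subseteq Q_1$. It then follows that $\uparrow P$ is a chain in $\Spec_{\RL}(A)$. The two inclusions for $\Ret$ follow at once from Theorem~\ref{thm:muresan-spectrum}. For $A\in\MTL$, the inverse-image map $\lambda_A^{-1}$ is an order isomorphism $\Spec(\mathcal L(A))\cong\Spec_{\RL}(A)$. Hence the Priestley spectrum of $\mathcal L(A)$ is a root system, so $\mathcal L(A)\in\Root\BDL$. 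Since $\BL\subseteq\MTL$, the second inclusion is a special case of the first.

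The key step uses prelinearity. Suppose, towards a contradiction, that $Q_1\not\subseteq Q_2$ and $Q_2\not\subseteq Q_1$. Choose $x\in Q_1\setminus Q_2$ and $y\in Q_2\setminus Q_1$. Since $(x\to y)\vee(y\to x)=1\in P$ and $P$ is prime, either $x\to y\in P$ or $y\to x\in P$. Say $x\to y\in P\subseteq Q_1$. Then $x\odot(x\to y)\in Q_1$ because filters are closed under $\odot$. Also $x\odot(x\to y)\le y$ by residuation, and $Q_1$ is upward closed, so $y\in Q_1$, a contradiction. The symmetric case $y\to x\in P\subseteq Q_2$ gives $x\in Q_2$, again a contradiction. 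This argument uses only that $Q_1,Q_2$ are filters. It also shows that prelinearity is used through a single instance of primeness of the smaller filter $P$.

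The main obstacle is nothing more than bookkeeping. We must make sure that the order on $\Spec_{\RL}(A)$ is inclusion, as fixed in Section~2. We must also make sure that the transfer through Theorem~\ref{thm:muresan-spectrum} goes in the right direction. An order isomorphism preserves principal upsets and the property of being a chain, so root systems are transported both ways. We must check as well that $\Root\BDL$ is defined via $\Spec(D)$ under the same filter convention. This holds because the definition speaks of Priestley spectra, whose points are prime filters ordered by inclusion. If the ideal convention were used instead, one would get a dual notion (downsets that are chains), so it matters that the convention matches. With these checks the lemma follows.
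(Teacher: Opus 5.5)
Your proof is correct and follows the paper's argument. The paper uses prelinearity and primeness of $P$ to show that the quotient $A/P$ is a chain, and then uses that filters of a chain are nested; you carry out that same step directly in $A$ via $x\odot(x\to y)\le y$. Your transfer to $\Ret(\MTL)$ and $\Ret(\BL)$ through Theorem~\ref{thm:muresan-spectrum} and $\BL\subseteq\MTL$ is also the paper's.
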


\begin{proof}
Let $P\in\Spec_{\RL}(A)$.  We prove that the set of prime filters above $P$ is linearly ordered.  If $Q,R\in\Spec_{\RL}(A)$ and $P\subseteq Q,R$, then $Q/P$ and $R/P$ are filters of the quotient $A/P$.  We first observe that $A/P$ is a chain.  Indeed, for any $x,y\in A$, prelinearity gives
\[
 (x\to y)\vee(y\to x)=1\in P.
\]
Since $P$ is prime, either $x\to y\in P$ or $y\to x\in P$.  Thus either $[x]\le [y]$ or $[y]\le[x]$ in $A/P$.  Hence $A/P$ is linearly ordered.  Filters of a chain are linearly ordered by inclusion, so $Q/P$ and $R/P$ are comparable.  Therefore $Q$ and $R$ are comparable.

The statement about reticulations follows from Theorem \ref{thm:muresan-spectrum}, which gives an order isomorphism
\[
  \Spec(\mathcal L(A))\cong\Spec_{\RL}(A).
\]
Since $\BL\subseteq\MTL$, the same necessary condition holds for $\BL$.
\end{proof}

For finite algebras the necessary root-system condition is also sufficient.  This gives a sharp finite Gr\"atzer theorem for the principal fuzzy-logic varieties $\MTL$ and $\BL$.

\begin{lemma}\label{lem:finite-godel-forest}
Let $P$ be a finite poset.  The finite Heyting algebra $\Up(P)$ is a G\"odel algebra if and only if $P$ is a root system.
\end{lemma}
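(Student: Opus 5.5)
We prove both directions directly in $\Up(P)$, ordered by inclusion, where meet is $\cap$, join is $\cup$, and the relative pseudocomplement is
\[
U\Rightarrow V=\{p\in P:\ \uparrow p\cap U\subseteq V\}.
\]
This is the largest upset $W$ with $W\cap U\subseteq V$. Being a G\"odel algebra means satisfying prelinearity $(U\Rightarrow V)\cup(V\Rightarrow U)=P$ for all upsets $U,V$. Pointwise, this says: for every $p\in P$, either $\uparrow p\cap U\subseteq V$ or $\uparrow p\cap V\subseteq U$. All arguments below use this pointwise reformulation.

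\emph{Root system $\Rightarrow$ prelinear.} Fix upsets $U,V$ and a point $p$. Suppose prelinearity fails at $p$. Then there are $q\in\uparrow p\cap U\setminus V$ and $r\in\uparrow p\cap V\setminus U$. Since $\uparrow p$ is a chain, $q$ and $r$ are comparable. If $q\le r$, then $r\in U$ because $U$ is an upset, which is a contradiction. If $r\le q$, then $q\in V$ because $V$ is an upset, again a contradiction. Hence prelinearity holds at every $p$, and $\Up(P)$ is a G\"odel algebra.

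\emph{Prelinear $\Rightarrow$ root system.} Suppose $q,r\in\uparrow p$ are incomparable. Take the principal upsets $U=\uparrow q$ and $V=\uparrow r$. Then $q\in\uparrow p\cap U$, but $q\notin V$ since $r\not\le q$. Likewise $r\in\uparrow p\cap V$, but $r\notin U$. So $p$ lies in neither $U\Rightarrow V$ nor $V\Rightarrow U$, and prelinearity fails. Hence if $\Up(P)$ is a G\"odel algebra, every $\uparrow p$ is a chain.

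The only step that needs care is confirming that the displayed formula is the Heyting implication. If $W$ is an upset with $W\cap U\subseteq V$ and $p\in W$, then $\uparrow p\subseteq W$, so $\uparrow p\cap U\subseteq V$ and $p\in U\Rightarrow V$. Conversely, $U\Rightarrow V$ is itself an upset: if $p\le p'$ then $\uparrow p'\subseteq\uparrow p$. Finiteness of $P$ is not actually needed here beyond the paper's framing; an alternative route via Esakia duality for finite Heyting algebras is possible, but the direct upset computation is cleaner.
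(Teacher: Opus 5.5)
Your proof is correct and follows the paper's argument essentially verbatim. You use the same formula $U\Rightarrow V=\{p:\uparrow p\cap U\subseteq V\}$ and the same pointwise reading of prelinearity; the paper phrases the forward direction as comparability of the upsets $U\cap\uparrow p$ and $V\cap\uparrow p$ of the chain $\uparrow p$, which your contradiction argument simply unpacks. You also use the same counterexample $U=\uparrow q$, $V=\uparrow r$ for the converse, and your added check that the formula is the relative pseudocomplement is a harmless supplement the paper takes for granted.
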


\begin{proof}
For upsets $U,V\subseteq P$, the Heyting implication in $\Up(P)$ is given by
\[
 U\Rightarrow V=\{p\in P: \uparrow p\cap U\subseteq V\}.
\]
Assume first that $P$ is a root system.  For every $p\in P$, the upsets $U\cap\uparrow p$ and $V\cap\uparrow p$ of the chain $\uparrow p$ are comparable by inclusion.  Hence either $p\in U\Rightarrow V$ or $p\in V\Rightarrow U$.  Since this holds for every $p$,
\[
 (U\Rightarrow V)\cup(V\Rightarrow U)=P,
\]
which is prelinearity.

Conversely, suppose $P$ is not a root system.  Then there exist $p\le q,r$ with $q$ and $r$ incomparable.  Put $U=\uparrow q$ and $V=\uparrow r$.  Then $p\notin U\Rightarrow V$, because $q\in\uparrow p\cap U$ but $q\notin V$, and similarly $p\notin V\Rightarrow U$.  Therefore prelinearity fails in $\Up(P)$.
\end{proof}

\begin{theorem}[Finite prelinear reticulation theorem]\label{thm:finite-mtl-bl-ret}
The following reticulation images coincide:
\[
 \Ret(\Fin\GA)=\Ret(\Fin\BL)=\Ret(\Fin\MTL)=\Fin\Root\BDL.
\]
Consequently, a finite poset is representable as the prime-filter spectrum of a finite $\MTL$-algebra, equivalently of a finite $\BL$-algebra, if and only if it is a root system.
\end{theorem}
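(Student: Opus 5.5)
The plan is to prove a cycle of inclusions:
\[
 \Ret(\Fin\GA)\subseteq\Ret(\Fin\BL)\subseteq\Ret(\Fin\MTL)\subseteq\Fin\Root\BDL\subseteq\Ret(\Fin\GA).
\]
The first two inclusions are immediate from the class inclusions $\Fin\GA\subseteq\Fin\BL\subseteq\Fin\MTL$. Here we use that $\GA$ is the idempotent part of $\BL$ and $\BL\subseteq\MTL$, and that $\Ret$ is monotone in the class.

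For the third inclusion, let $A$ be a finite $\MTL$-algebra. Its reticulation $\mathcal L(A)$ is finite, since $\lambda_A$ is a surjection from a finite set; alternatively this follows from Proposition~\ref{prop:finite-image}. By Lemma~\ref{lem:prelinear-root-necessary} and Theorem~\ref{thm:muresan-spectrum}, the spectrum $\Spec(\mathcal L(A))\cong\Spec_{\RL}(A)$ is a root system. Hence $\mathcal L(A)\in\Fin\Root\BDL$.

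For the last inclusion, let $D$ be a finite bounded distributive lattice whose spectrum $P=\Spec(D)$ is a root system. By finite Birkhoff duality, $D\cong\Up(P)$, using the evaluation map $d\mapsto\{F\in P:d\in F\}$. The step needing care is matching conventions: with prime \emph{filters} ordered by inclusion, elements of $D$ correspond to upsets of $\Spec(D)$, consistent with Corollary~\ref{cor:finite-posets}. Lemma~\ref{lem:finite-godel-forest} then says that $\Up(P)$ is a finite Gödel algebra. Proposition~\ref{prop:idempotent-heyting} gives $\mathcal L(\Up(P))\cong\Up(P)\cong D$, so $D\in\Ret(\Fin\GA)$.

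For the consequence, let $Q$ be a finite poset. If $Q\cong\Spec_{\RL}(A)$ for a finite $\MTL$- or $\BL$-algebra $A$, then $Q$ is a root system by Lemma~\ref{lem:prelinear-root-necessary}. Conversely, if $Q$ is a root system, then $\Up(Q)$ is a finite Gödel algebra, hence a finite $\BL$-algebra and a finite $\MTL$-algebra. Its spectrum is $\Spec(\Up(Q))\cong Q$, again by Corollary~\ref{cor:finite-posets}'s map $p\mapsto F_p$ together with Theorem~\ref{thm:muresan-spectrum}.

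The only real obstacle is the duality bookkeeping. We must check that $\Spec(\Up(P))\cong P$ as posets, with the same orientation used to define root systems, so that the root-system condition is not silently replaced by its order dual. The map $p\mapsto F_p$ in Corollary~\ref{cor:finite-posets} settles this: $p\le q$ iff $F_p\subseteq F_q$, because upsets containing $p$ contain $q$, and $\uparrow p$ separates in the other direction. Everything else is a direct assembly of earlier results.
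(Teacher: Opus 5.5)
Your proposal is correct and follows the paper's proof essentially step for step. It uses the same cycle of inclusions, with Lemma~\ref{lem:prelinear-root-necessary} giving $\Ret(\Fin\MTL)\subseteq\Fin\Root\BDL$, and finite duality $D\cong\Up(P)$ together with Lemma~\ref{lem:finite-godel-forest} and Proposition~\ref{prop:idempotent-heyting} giving the reverse inclusion. The only differences are that you derive the poset-level consequence directly rather than citing Theorem~\ref{thm:gratz-rl}, and you make explicit the orientation check $p\le q\iff F_p\subseteq F_q$, which the paper leaves implicit.
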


\begin{proof}
The inclusions
\[
 \Ret(\Fin\GA)\subseteq\Ret(\Fin\BL)\subseteq\Ret(\Fin\MTL)
\]
are immediate from $\GA\subseteq\BL\subseteq\MTL$.  By Lemma \ref{lem:prelinear-root-necessary}, every lattice in $\Ret(\Fin\MTL)$ has finite root-system spectrum, so
\[
 \Ret(\Fin\MTL)\subseteq\Fin\Root\BDL.
\]

Conversely, let $D\in\Fin\Root\BDL$ and let $P=\Spec(D)$.  Since $D$ is finite, Priestley duality identifies $D$ with $\Up(P)$.  By Lemma \ref{lem:finite-godel-forest}, $\Up(P)$ is a finite G\"odel algebra.  Since G\"odel algebras are idempotent, Proposition \ref{prop:idempotent-heyting} gives
\[
 \mathcal L(\Up(P))\cong\Up(P)\cong D.
\]
Thus $D\in\Ret(\Fin\GA)$.

The final statement follows from Theorem \ref{thm:gratz-rl}: finite $\MTL$-representability or finite $\BL$-representability of a finite poset is exactly being the spectrum of a member of the displayed common reticulation image, i.e. exactly being a finite root system.
\end{proof}

\begin{remark}\label{rem:prelinear-versus-heyting}
This theorem explains how the Heyting/Esakia part and the fuzzy-logic part interact.  In arbitrary finite residuated lattices every finite poset occurs, by Corollary \ref{cor:finite-posets}.  Under prelinearity, however, the finite spectra collapse exactly to finite root systems.  Thus $\MTL$ and $\BL$ impose a real order-theoretic restriction, but it is not the diamond-system condition from the Heyting completion theorem; it is the root-system condition forced by prelinearity.
\end{remark}

\section{Power-bounded reticulation, EDPC, and finite congruence control}
\label{sec:npotent}

The reticulation relation
\[
 \lambda(a)\le\lambda(b)\quad\Longleftrightarrow\quad \exists k\ge 1\; a^k\le b
\]
is existential in general: the exponent may depend on the pair $(a,b)$.  In many of the central substructural varieties, however, powers stabilize uniformly.  This is exactly the situation in the $n$-potent varieties.  The monograph of Galatos, Jipsen, Kowalski, and Ono places this condition in the general structure theory of residuated lattices: in the commutative integral setting, $n$-potency is the algebraic form of weak contraction, and inside $\mathsf{FL}_{ew}$ it is equivalent to DPC and EDPC for varieties \cite[p.~200, Cor.~3.56; p.~464, Thm.~11.2]{GalatosJipsenKowalskiOno2007}.  Since our convention identifies residuated lattices with bounded commutative integral $\mathsf{FL}_{ew}$-algebras, these results apply directly to the varieties considered in this paper.

We use DPC and EDPC only as finite-congruence control principles.  A variety has \emph{DPC} if membership in a principal congruence $\operatorname{Cg}(a,b)$ is uniformly first-order definable, and it has \emph{EDPC} if this definition can be made equational.  EDPC implies strong control of principal congruences and, in the residuated-lattice setting, is tied to $n$-potency.  This should not be confused with the profinite completion property: even Heyting algebras, which are idempotent and hence fall under the EDPC/n-potent regime, have profinite members that are not profinite completions.  The role of EDPC here is subtler: it makes reticulation internal and supplies positive completion criteria when all finite-index congruences are finitely generated.

The ambient congruence theory of residuated lattices is already unusually tractable.  The lattice reduct gives congruence distributivity, and the filter--congruence correspondence makes quotients and congruence generation concrete.  Standard congruence-extension results for commutative residuated-lattice varieties, together with the DPC/EDPC characterization in $\mathsf{FL}_{ew}$, show that these algebras have strong algebraic congruence control \cite[Chs.~3 and 11]{GalatosJipsenKowalskiOno2007}.  Our completion results below show that such algebraic control is not, by itself, a profinite-completion theorem: the remaining issue is topological, namely whether finite-index congruences on a dense subalgebra are induced by clopen congruences of the ambient profinite algebra.

\begin{definition}
For $n\ge 1$, a residuated lattice $A$ is called \emph{$n$-potent} if
\[
 x^{n+1}=x^n
\]
for all $x\in A$.
\end{definition}

\begin{theorem}[Uniform reticulation in $n$-potent algebras]
\label{thm:npotent-reticulation}
Let $A$ be an $n$-potent residuated lattice and let $\lambda:A\to\mathcal L(A)$ be its reticulation map.  Then for all $a,b\in A$,
\[
 \lambda(a)\le\lambda(b)
 \quad\Longleftrightarrow\quad
 a^n\le b.
\]
Consequently,
\[
 \lambda(a)=\lambda(b)
 \quad\Longleftrightarrow\quad
 a^n\le b\text{ and }b^n\le a.
\]
Moreover, if $S\subseteq A$ and $G_S$ is the lattice filter of $\mathcal L(A)$ generated by $\lambda[S]$, then
\[
 \lambda^{-1}(G_S)
 =
 \left\{x\in A:
 \exists s_1,\ldots,s_m\in S\text{ such that }(s_1\odot\cdots\odot s_m)^n\le x
 \right\}.
\]
In particular, for a finite set $S=\{s_1,
\ldots,s_m\}$,
\[
 \lambda^{-1}(G_S)=\uparrow (s_1\odot\cdots\odot s_m)^n.
\]
\end{theorem}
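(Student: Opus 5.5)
The plan is to read everything off the defining order condition of the reticulation in Definition~\ref{def:reticulation}, together with the fact that powers decrease in an integral residuated lattice. For the first equivalence, assume $\lambda(a)\le\lambda(b)$. Then $a^k\le b$ for some $k\ge1$. If $k\le n$, integrality gives $a^n\le a^k\le b$. If $k>n$, iterating $n$-potency ($a^{n+1}=a^n$, hence $a^{m}=a^n$ for all $m\ge n$) gives $a^n=a^k\le b$. Conversely, $a^n\le b$ is an instance of the existential condition with exponent $n$. The characterisation of $\lambda(a)=\lambda(b)$ then follows by applying this equivalence in both directions and using antisymmetry in $\mathcal L(A)$.

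For the filter description, first identify $G_S$ concretely. Since $\mathcal L(A)$ is a lattice, the lattice filter generated by $\lambda[S]$ consists of all $d$ with $\lambda(s_1)\wedge\cdots\wedge\lambda(s_m)\le d$ for some $s_1,\dots,s_m\in S$. (When $S=\emptyset$, take $m=0$ with empty product $1$, so $G_\emptyset=\{1\}$.) Because $\lambda$ turns $\odot$ into $\wedge$, this meet equals $\lambda(s_1\odot\cdots\odot s_m)$. Hence $x\in\lambda^{-1}(G_S)$ iff $\lambda(s_1\odot\cdots\odot s_m)\le\lambda(x)$ for some finite family from $S$. By the first part, this holds iff $(s_1\odot\cdots\odot s_m)^n\le x$, which gives the displayed set.

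For finite $S=\{s_1,\dots,s_m\}$, set $s=s_1\odot\cdots\odot s_m$. One inclusion is immediate by taking all the $s_i$. For the other, any product $t$ of finitely many elements of $S$, with repetitions allowed, satisfies $s^n\le t^n$. To see this, pick the exponent of each $s_i$ in $t$ and raise it to $\ge1$ if necessary; integrality shows that inserting extra factors only lowers the product, so some product $t'\le t$ contains every $s_i$. Since each $s_i$ appears in $t$ at most finitely often, a power of $s$ lies below $t'$, giving $s^{N}\le t$ for some $N$. Then $s^{nN}\le t^n$ by monotonicity of $\odot$, and $s^{nN}=s^n$ by $n$-potency. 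Thus $s^n\le t^n\le x$, so $\lambda^{-1}(G_S)=\uparrow s^n$.

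The only step needing care is this last bookkeeping with repeated factors and the empty product. There $n$-potency, rather than mere integrality, does the work, by collapsing $s^{nN}$ back to $s^n$.
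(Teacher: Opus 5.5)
Your proposal is correct and follows the paper's proof essentially step for step. You use the same $k\le n$ versus $k>n$ split for the order characterization, and the same description of $G_S$ through finite meets with $\lambda(s_1\odot\cdots\odot s_m)=\lambda(s_1)\wedge\cdots\wedge\lambda(s_m)$. The only difference is the finite-$S$ case. There the paper just notes that the single meet $\lambda(s_1\odot\cdots\odot s_m)$ already generates $G_S$, since idempotence of $\wedge$ in $\mathcal L(A)$ absorbs repeated factors, and then applies the first part. So your explicit bookkeeping with repetitions and the collapse $s^{nN}=s^n$ is valid but not needed.
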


\begin{proof}
If $\lambda(a)\le\lambda(b)$, then by the definition of reticulation there is $k\ge1$ such that $a^k\le b$.  If $k\le n$, then integrality gives
\[
 a^n\le a^k\le b.
\]
If $k>n$, then $n$-potency gives $a^k=a^n$, and again $a^n\le b$.  The converse is immediate from the definition, taking the exponent $n$.

The characterization of equality follows by applying the order characterization in both directions.  For the assertion about generated lattice filters, recall that the filter of a bounded distributive lattice generated by a set $T$ consists of all elements above a finite meet of elements of $T$.  Since
\[
 \lambda(s_1)\wedge\cdots\wedge\lambda(s_m)
 =\lambda(s_1\odot\cdots\odot s_m),
\]
we have $\lambda(x)\in G_S$ exactly when, for some finite $s_1,
\ldots,s_m\in S$,
\[
 \lambda(s_1\odot\cdots\odot s_m)\le\lambda(x).
\]
By the first part this is equivalent to
\[
 (s_1\odot\cdots\odot s_m)^n\le x.
\]
If $S$ is finite, the finite meet of all elements of $\lambda[S]$ generates the same filter, giving the displayed principal description.
\end{proof}

\begin{lemma}[Power expansion estimate]\label{lem:power-expansion-estimate}
In every commutative integral residuated lattice, and for all \(x,y\) and integers \(r,s\ge1\),
\[
 (x\vee y)^{rs}\le x^r\vee y^s.
\]
\end{lemma}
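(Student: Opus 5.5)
We prove $(x\vee y)^{rs}\le x^r\vee y^s$ by expanding the power with distributivity of $\odot$ over $\vee$ and bounding each monomial by a pigeonhole count. Recall from Section~2 that $x\odot(y\vee z)=(x\odot y)\vee(x\odot z)$, since $x\odot-$ is a left adjoint. Together with commutativity and associativity, induction on $N$ gives
\[
 (x\vee y)^N=\bigvee_{i+j=N} x^i\odot y^j ,
\]
with the conventions $x^0=y^0=1$. This is a finite join over $i=0,\dots,N$. It is the only place where residuation enters, and only through distributivity.

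Next we apply this with $N=rs$. It is cleaner to use the sharper exponent $N=r+s-1$ and then note that $N\le rs$ whenever $r,s\ge 1$, because $(r-1)(s-1)\ge 0$. For each pair $i+j=r+s-1$ with $i,j\ge 0$, either $i\ge r$ or $j\ge s$; otherwise $i+j\le r+s-2$. Integrality gives $u\odot v\le u$ and also that powers are decreasing, $x^{k+1}\le x^k$, as used in Proposition~\ref{prop:products-ret}. So if $i\ge r$ then $x^i\odot y^j\le x^i\le x^r$, and if $j\ge s$ then $x^i\odot y^j\le y^j\le y^s$. Every monomial therefore lies below $x^r\vee y^s$, and hence so does their join:
\[
 (x\vee y)^{r+s-1}\le x^r\vee y^s .
\]

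Finally, since powers decrease and $rs\ge r+s-1$, we get $(x\vee y)^{rs}\le (x\vee y)^{r+s-1}\le x^r\vee y^s$, which is the claim. There is no real obstacle here. The only care needed is in the boundary terms $i=0$ or $j=0$ of the expansion, which are handled by the convention $x^0=1$ together with integrality.
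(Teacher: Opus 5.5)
Your proof is correct and follows the paper's argument: expand the power into monomials $x^i\odot y^j$ by distributivity of $\odot$ over $\vee$, then bound each monomial by $x^r$ or $y^s$ via integrality. The only difference is that you run the pigeonhole at the sharper exponent $r+s-1$ and then use that powers decrease, which makes the case split transparent; the paper splits directly at exponent $rs$, and its claim that $i<r$ forces $rs-i\ge s$ rests on the same inequality $(r-1)(s-1)\ge 0$.
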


\begin{proof}
Since multiplication distributes over finite joins in each coordinate, \((x\vee y)^{rs}\) is the join of all monomials of length \(rs\) in \(x\) and \(y\).  By commutativity, such a monomial is of the form \(x^i\odot y^{rs-i}\).  For every \(0\le i\le rs\), either \(i\ge r\) or \(rs-i\ge s\).  In the first case, integrality gives \(x^i\odot y^{rs-i}\le x^i\le x^r\); in the second case it gives \(x^i\odot y^{rs-i}\le y^{rs-i}\le y^s\).  Hence every monomial lies below \(x^r\vee y^s\), and so does their join.
\end{proof}

\begin{theorem}[$n$-potent skeleton]
\label{thm:npotent-skeleton}
Let $A$ be an $n$-potent residuated lattice.  Put
\[
 A^{[n]}=\{a^n:a\in A\}.
\]
For $u=a^n$ and $v=b^n$ define
\[
 u\vee_{[n]}v=u\vee v,
 \qquad
 u\wedge_{[n]}v=u\odot v.
\]
Then
\[
 A^{[n]}=(A^{[n]},\wedge_{[n]},\vee_{[n]},0,1)
\]
is a bounded distributive lattice, and the map
\[
 \sigma_n:A\to A^{[n]},\qquad a\mapsto a^n,
\]
is a reticulation of $A$.  Hence
\[
 \mathcal L(A)\cong A^{[n]}.
\]
\end{theorem}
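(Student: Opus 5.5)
The plan is to verify directly that $\sigma_n$ satisfies the axioms of Definition~\ref{def:reticulation}, with $A^{[n]}$ carrying the stated operations. Then we transport the lattice structure through the order characterization of Theorem~\ref{thm:npotent-reticulation}. The first observation is that every $u\in A^{[n]}$ is idempotent: $u=a^n$ gives $u^2=a^{2n}=a^n$ by $n$-potency. Hence $A^{[n]}=\{u\in A:u^2=u\}$. Indeed, an idempotent $e$ satisfies $e=e^n\in A^{[n]}$.

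\textbf{Closure and lattice structure.} On idempotents, $u\odot v$ is again idempotent by commutativity. It is the meet of $u$ and $v$ in the order of $A$ restricted to idempotents. We have $u\odot v\le u,v$ by integrality. If $w\le u,v$ with $w$ idempotent, then $w=w\odot w\le u\odot v$.

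Closure under $\vee$ is the main obstacle. We need $(u\vee v)^2=u\vee v$ for idempotents $u,v$. Expanding gives $(u\vee v)^2=u\vee(u\odot v)\vee v=u\vee v$, because $u^2=u$, $v^2=v$ and $u\odot v\le u$. So $u\vee v$ is idempotent, and it is clearly the join in $A^{[n]}$ as well. Also $0=0^n$ and $1=1^n$ lie in $A^{[n]}$.

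Thus $A^{[n]}$, with the order inherited from $A$, is a bounded lattice whose meet is $\odot$ and whose join is $\vee$. For distributivity, use distributivity of $\odot$ over $\vee$ from Section~2. This gives $u\odot(v\vee w)=(u\odot v)\vee(u\odot w)$, which is exactly $u\wedge_{[n]}(v\vee_{[n]}w)=(u\wedge_{[n]}v)\vee_{[n]}(u\wedge_{[n]}w)$.

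\textbf{Reticulation axioms.} Surjectivity of $\sigma_n$ is by definition, and $\sigma_n(0)=0$, $\sigma_n(1)=1$ are immediate. Multiplicativity follows from commutativity: $(a\odot b)^n=a^n\odot b^n=\sigma_n(a)\wedge_{[n]}\sigma_n(b)$.

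For joins we must show $(a\vee b)^n=a^n\vee b^n$. One inequality is monotonicity: $a^n\vee b^n\le(a\vee b)^n$. For the other, apply Lemma~\ref{lem:power-expansion-estimate} with $r=s=n$ to get $(a\vee b)^{n^2}\le a^n\vee b^n$. By $n$-potency, $(a\vee b)^{n^2}=(a\vee b)^n$. This is where the power expansion estimate is genuinely needed.

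Finally, the order condition: $\sigma_n(a)\le\sigma_n(b)$ in $A^{[n]}$ means $a^n\le b^n$. We must show this holds iff $a^k\le b$ for some $k\ge1$.
\begin{itemize}
\item If $a^n\le b^n$, then $a^n\le b^n\le b$.
\item Conversely, given $a^k\le b$, first $a^n\le b$ as in the proof of Theorem~\ref{thm:npotent-reticulation}. Raising to the $n$-th power gives $a^{n^2}\le b^n$, and $a^{n^2}=a^n$.
\end{itemize}

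So $\sigma_n$ is a reticulation of $A$, and uniqueness of reticulations up to bounded-lattice isomorphism (Definition~\ref{def:reticulation}) gives $\mathcal L(A)\cong A^{[n]}$.
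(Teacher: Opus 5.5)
Your proof is correct and follows essentially the same route as the paper: you verify the reticulation axioms for $\sigma_n$ directly, use Lemma~\ref{lem:power-expansion-estimate} to get $(a\vee b)^n=a^n\vee b^n$, and handle the order condition with the same $a^n\le b^n\iff a^n\le b$ argument. The only variation is that you first identify $A^{[n]}$ with the idempotents of $A$ and get closure under $\vee$ from the elementary identity $(u\vee v)^2=u\vee v$, keeping the power-expansion estimate for join preservation only, whereas the paper derives closure under joins from the power-expansion identity itself and gets the lattice structure from absorption.
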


\begin{proof}
We first show that $A^{[n]}$ is closed under joins.  Monotonicity gives
\[
 a^n\vee b^n\le (a\vee b)^n.
\]
Conversely, Lemma~\ref{lem:power-expansion-estimate}, applied with $r=s=n$, gives
\[
 (a\vee b)^{n^2}\le a^n\vee b^n.
\]
Using $n$-potency, $(a\vee b)^{n^2}=(a\vee b)^n$, and therefore $(a\vee b)^n=a^n\vee b^n$.

The set $A^{[n]}$ is closed under fusion because
\[
 a^n\odot b^n=(a\odot b)^n.
\]
Every element of $A^{[n]}$ is idempotent for fusion, since
\[
 a^n\odot a^n=a^{2n}=a^n.
\]
For $u,v\in A^{[n]}$, fusion distributes over $\vee$ and satisfies absorption:
\[
 u\odot(u\vee v)=(u\odot u)\vee(u\odot v)=u\vee(u\odot v)=u,
\]
because $u\odot v\le u$.  Thus fusion is the meet operation for the join order on $A^{[n]}$, and distributivity follows from distributivity of fusion over joins.  Hence $A^{[n]}$ is a bounded distributive lattice.

The map $\sigma_n$ is onto by definition, sends $0$ to $0$ and $1$ to $1$, preserves joins by the first paragraph, and sends fusion to meet by
\[
 \sigma_n(a\odot b)=(a\odot b)^n=a^n\odot b^n
 =\sigma_n(a)\wedge_{[n]}\sigma_n(b).
\]
The reticulation order condition is exactly the first assertion of Theorem \ref{thm:npotent-reticulation}, because
\[
 \sigma_n(a)\le \sigma_n(b)\text{ in }A^{[n]}
 \quad\Longleftrightarrow\quad
 a^n\le b^n
 \quad\Longleftrightarrow\quad
 a^n\le b,
\]
where the last equivalence uses $b^n\le b$ and $(a^n)^n=a^n$.  Thus $\sigma_n$ is a reticulation of $A$.
\end{proof}

\begin{corollary}\label{cor:npotent-image-skeleton}
For every class $\mathcal K$ of $n$-potent residuated lattices,
\[
 \Ret(\mathcal K)=\{A^{[n]}:A\in\mathcal K\},
\]
where each $A^{[n]}$ is equipped with the bounded distributive lattice structure of Theorem \ref{thm:npotent-skeleton}.  Thus in $n$-potent classes reticulation is an internal power-skeleton construction.
\end{corollary}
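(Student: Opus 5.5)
The plan is to read the corollary off Theorem~\ref{thm:npotent-skeleton} together with the uniqueness of reticulations recorded after Definition~\ref{def:reticulation}. There are two inclusions to check, and both reduce to the fact that $\mathcal L(A)\cong A^{[n]}$ for each $n$-potent $A$.

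For the inclusion $\Ret(\mathcal K)\subseteq\{A^{[n]}:A\in\mathcal K\}$ (read up to isomorphism), I would take $D\in\Ret(\mathcal K)$. By Definition~\ref{def:ret-image} there is some $A\in\mathcal K$ with $D\cong\mathcal L(A)$. Since $A$ is $n$-potent, Theorem~\ref{thm:npotent-skeleton} shows that $\sigma_n:A\to A^{[n]}$ is a reticulation of $A$. Reticulations are unique up to bounded-lattice isomorphism, so $\mathcal L(A)\cong A^{[n]}$, and hence $D\cong A^{[n]}$.

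For the reverse inclusion, I would take $A\in\mathcal K$. Theorem~\ref{thm:npotent-skeleton} shows that $A^{[n]}$, with $\wedge_{[n]}=\odot$ and $\vee_{[n]}=\vee$, is a bounded distributive lattice isomorphic to $\mathcal L(A)$. Since $\Ret(\mathcal K)$ is defined as an isomorphism-closed class, $A^{[n]}$ belongs to it.

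The only step needing care is the convention. The right-hand side has to be understood up to isomorphism, so that it is an isomorphism-closed class of bounded distributive lattices just as $\Ret(\mathcal K)$ is. With that reading there is no real obstacle: the substantive work, namely closure of $A^{[n]}$ under joins and the reticulation order condition, is already done in Theorem~\ref{thm:npotent-skeleton}. The closing phrase of the corollary, that reticulation is an internal power-skeleton construction, just records that $\mathcal L(A)$ is realized by the subset $A^{[n]}\subseteq A$, with meet given by $\odot$ and join given by $\vee$ computed inside $A$.
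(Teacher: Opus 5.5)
Your proposal is correct and matches how the paper obtains this corollary. The paper states it without a separate proof, as an immediate consequence of Theorem~\ref{thm:npotent-skeleton}: $\sigma_n$ is a reticulation, so uniqueness of reticulations gives $\mathcal L(A)\cong A^{[n]}$, and the equality of classes is read up to isomorphism, as you point out.
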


\begin{corollary}[EDPC classes have bounded reticulation complexity]
\label{cor:edpc-reticulation}
Let $\mathcal V$ be a subvariety of $\mathsf{FL}_{ew}$ whose residuated-lattice reducts are bounded, commutative and integral.  If $\mathcal V$ has EDPC, equivalently DPC, then there is an integer $n\ge1$ such that for every $A\in\mathcal V$ the reticulation order of $\mathcal L(A)$ is represented by the single inequality
\[
 \lambda(a)\le\lambda(b)
 \quad\Longleftrightarrow\quad
 a^n\le b.
\]
Thus, in EDPC substructural varieties, reticulation is not only functorial but uniformly term-controlled.
\end{corollary}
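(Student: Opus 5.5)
The plan is to reduce the corollary to Theorem~\ref{thm:npotent-reticulation}, taking the passage from EDPC to $n$-potency as an external input. The only genuinely substantive step is to obtain a single exponent $n$ that works uniformly for every member of $\mathcal V$. Once $n$ is fixed, the displayed equivalence is exactly the first assertion of Theorem~\ref{thm:npotent-reticulation}, applied member by member.

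First, obtain $n$. By the characterization cited at the start of Section~\ref{sec:npotent} \cite[p.~200, Cor.~3.56; p.~464, Thm.~11.2]{GalatosJipsenKowalskiOno2007}, a subvariety of $\mathsf{FL}_{ew}$ has DPC iff it has EDPC iff it is $n$-potent for some $n\ge1$. Here $n$-potent means that the identity $x^{n+1}\approx x^n$ holds in the variety. Since this is an identity of $\mathcal V$, it holds in every $A\in\mathcal V$ with the same $n$, so the uniformity comes for free. I would spell out why the cited result applies: the bounded, commutative, integral hypothesis places $\mathcal V$ inside the paper's $\RL$, which is identified with bounded $\mathsf{FL}_{ew}$ as stated in Section~2. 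The constant $0$ plays no role in the congruence-theoretic characterization, since principal congruences are governed by filters via Proposition~\ref{prop:filter-cong}.

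Second, fix $A\in\mathcal V$ and let $\lambda:A\to\mathcal L(A)$ be its reticulation. Since $A$ is $n$-potent, Theorem~\ref{thm:npotent-reticulation} gives $\lambda(a)\le\lambda(b)$ iff $a^n\le b$, for all $a,b\in A$. The phrase ``uniformly term-controlled'' can then be made concrete: the order is the single inequation $x^n\le y$, equivalently the equation $x^n\to y\approx 1$. By Theorem~\ref{thm:npotent-skeleton} one may also take $\mathcal L(A)=A^{[n]}$ with $\lambda=\sigma_n$, so the reticulation is given by the term $x^n$ itself.

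The main obstacle is the first step. It does not follow from anything proved in the paper, so it must rest on the cited equivalence. The delicate point is the direction DPC/EDPC $\Rightarrow$ $n$-potency. If I wanted a sketch rather than a bare citation, I would argue as follows. The filter generated by $a$ is $\{x:\exists k\ a^k\le x\}$. A first-order definition of principal congruences that is uniform across $\mathcal V$ forces, by compactness on ultraproducts, a bound on the $k$ needed. This yields $x^{k+1}\approx x^k$ via the pair $(x^{k+1},1)$ against $x^k$. I expect to defer this to the citation rather than reprove it.
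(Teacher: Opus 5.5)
Your proposal is correct and matches the paper's proof: you cite the Galatos--Jipsen--Kowalski--Ono equivalence DPC $\Leftrightarrow$ EDPC $\Leftrightarrow$ $n$-potency for subvarieties of $\mathsf{FL}_{ew}$ to obtain a single $n$, which is uniform because $x^{n+1}\approx x^n$ is an identity of $\mathcal V$, and then apply Theorem~\ref{thm:npotent-reticulation} to each $A\in\mathcal V$. Your ultraproduct sketch of the direction DPC $\Rightarrow$ $n$-potency is sound; it is an optional addition that the paper leaves to the citation.
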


\begin{proof}
Galatos--Jipsen--Kowalski--Ono prove that, inside $\mathsf{FL}_{ew}$, a variety has DPC iff it has EDPC iff it is contained in an $n$-potent variety for some $n$ \cite[p.~200, Cor.~3.56; p.~464, Thm.~11.2]{GalatosJipsenKowalskiOno2007}.  Apply Theorem \ref{thm:npotent-reticulation} uniformly to all members of $\mathcal V$.
\end{proof}

\begin{remark}
This gives a second reason why the residuated-lattice approach differs from the Heyting one.  In Heyting algebras the exponent is $n=1$, so reticulation is the identity.  In MV-, BL-, and MTL-type situations the exponent is often not $1$ and the reticulation records only the stabilized radical order.  The power skeleton $A^{[n]}$ is exactly the part that survives at the distributive-lattice spectral level.
\end{remark}

\begin{corollary}[EDPC gives internal reticulation skeletons]
\label{cor:edpc-internal-skeleton}
Let $\mathcal V$ be a bounded commutative integral residuated-lattice variety with EDPC.  Then there is an integer $n\ge1$ such that, for every $A\in\mathcal V$,
\[
 \mathcal L(A)\cong A^{[n]}=\{a^n:a\in A\}
\]
as bounded distributive lattices, with join inherited from $A$ and meet given by fusion on $A^{[n]}$.  Consequently, at the level of prime spectra, the Gr\"atzer representation problem for $\mathcal V$ is controlled by the Priestley spectra of the internal power skeletons $A^{[n]}$.
\end{corollary}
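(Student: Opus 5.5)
The plan is to reduce the statement to two results already proved: the $n$-potent skeleton theorem (Theorem~\ref{thm:npotent-skeleton}) and the Galatos--Jipsen--Kowalski--Ono characterization of DPC/EDPC inside $\mathsf{FL}_{ew}$ quoted in the proof of Corollary~\ref{cor:edpc-reticulation}. By that characterization, a variety $\mathcal V$ of bounded commutative integral residuated lattices with EDPC is contained in an $n$-potent variety for some $n\ge 1$. So there is a single $n$, depending only on $\mathcal V$, such that every $A\in\mathcal V$ satisfies $x^{n+1}=x^n$. This uniformity is the only place EDPC is used, and I will fix this $n$ once and for all.

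Next, take any $A\in\mathcal V$ and apply Theorem~\ref{thm:npotent-skeleton} to it with the fixed $n$. That theorem says three things:
\begin{itemize}
\item $A^{[n]}=\{a^n:a\in A\}$ is closed under $\vee$ and $\odot$;
\item $A^{[n]}$, with join inherited from $A$ and meet given by fusion, is a bounded distributive lattice with constants $0=0^n$ and $1=1^n$;
\item $\sigma_n\colon a\mapsto a^n$ is a reticulation of $A$.
\end{itemize}
Since the reticulation is unique up to bounded-lattice isomorphism (Definition~\ref{def:reticulation}), this gives $\mathcal L(A)\cong A^{[n]}$, realized by $\lambda_A(a)\mapsto a^n$. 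I would check that this map is well defined and order-preserving using Theorem~\ref{thm:npotent-reticulation}: we have $\lambda_A(a)\le\lambda_A(b)$ iff $a^n\le b$ iff $a^n\le b^n$. That settles the displayed isomorphism, with the stated operations.

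For the spectral consequence, combine this isomorphism with Mure\c{s}an's Theorem~\ref{thm:muresan-spectrum}. This gives $\Spec_{\RL}(A)\cong\Spec(\mathcal L(A))\cong\Spec(A^{[n]})$, as ordered spaces and as Priestley spaces. Hence the $\mathcal V$-representable prime-filter spectra are exactly the Priestley spectra of the lattices $A^{[n]}$ with $A\in\mathcal V$. This is Corollary~\ref{cor:npotent-image-skeleton} applied to $\mathcal K=\mathcal V$.

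I expect no real obstacle. The only delicate point is citing the $\mathsf{FL}_{ew}$ result in the correct form, namely that the variety itself (not just each member) is $n$-potent for a uniform $n$. The bounded, commutative, integral hypotheses are needed to place $\mathcal V$ in the setting where Theorem~\ref{thm:npotent-skeleton} applies.
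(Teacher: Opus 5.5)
Your proposal is correct and follows essentially the same route as the paper. The $\mathsf{FL}_{ew}$ DPC/EDPC characterization gives a uniform $n$ for which $\mathcal V$ is $n$-potent; Theorem~\ref{thm:npotent-skeleton} together with uniqueness of reticulations gives $\mathcal L(A)\cong A^{[n]}$; and Mure\c{s}an's Theorem~\ref{thm:muresan-spectrum} transfers this to prime spectra. Your explicit check that $\lambda_A(a)\mapsto a^n$ is well defined, via $a^n\le b\iff a^n\le b^n$, just spells out what the paper leaves inside the proof of Theorem~\ref{thm:npotent-skeleton}.
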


\begin{proof}
By the DPC/EDPC theorem for subvarieties of $\mathsf{FL}_{ew}$, there is $n$ such that $\mathcal V$ is $n$-potent.  The isomorphism $\mathcal L(A)\cong A^{[n]}$ is Theorem~\ref{thm:npotent-skeleton}.  The statement about prime spectra follows from the reticulation spectrum theorem, Theorem~\ref{thm:muresan-spectrum}.
\end{proof}

\section{Applications to standard substructural varieties}
\label{sec:standard-varieties}

The notation for the standard subvarieties was fixed in Section~2.  We use this section only to record consequences of the preceding theorems for those familiar classes, without repeating the taxonomy.  The finite-embeddability results of Galatos--Jipsen--Kowalski--Ono show that several of these important varieties, including $\HA$, $\MTL$, $\BL$, and $\MV$, have FEP \cite[Sec.~6.5]{GalatosJipsenKowalskiOno2007}.

This section is not used in the proof of the Stone profiniteness theorem.  Its purpose is to compare that topological finite-quotient theorem with the algebraic finite-embeddability properties of the standard substructural varieties.  Proposition~\ref{prop:fep-free-resfinite} and Corollary~\ref{cor:fep-free-main} record algebraic residual-finiteness consequences of FEP for free algebras.  Corollary~\ref{cor:standard-varieties}, by contrast, is a direct application of the first main problem of the paper: it says that a Stone topological algebra in any of the standard subvarieties is already a profinite algebra in that same subvariety.  Thus FEP and Stone profiniteness are related finite-approximation phenomena, but they enter the paper at different levels: FEP is algebraic, whereas the Stone theorem is topological and uses clopen filter quotients.

\begin{proposition}\label{prop:fep-free-resfinite}
Let $\mathcal V$ be a variety of residuated lattices with FEP.  Then $\mathcal V$ is generated, as a variety, by its finite members.  Consequently every finitely generated free algebra $F_{\mathcal V}(n)$ is residually finite, and its canonical map into its profinite completion is injective with dense image.
\end{proposition}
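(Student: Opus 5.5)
The plan is to unwind the definition of FEP and pass through Birkhoff's HSP theorem. Recall that $\mathcal V$ has the finite embeddability property when every finite partial subalgebra of a member of $\mathcal V$ embeds into a finite member of $\mathcal V$. Throughout, the finite signature $\wedge,\vee,\odot,\to,0,1$ is fixed. The argument has three steps.

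\emph{Step 1: $\mathcal V$ is generated by its finite members.} Let $\mathcal V_{\mathrm{fin}}$ be the class of finite members of $\mathcal V$, so $\operatorname{HSP}(\mathcal V_{\mathrm{fin}})\subseteq\mathcal V$. For the reverse inclusion it suffices to show that every equation $s\approx t$ valid in $\mathcal V_{\mathrm{fin}}$ is valid in $\mathcal V$. Suppose instead that $A\in\mathcal V$ and $\bar a$ in $A$ satisfy $s^A(\bar a)\ne t^A(\bar a)$.
\begin{itemize}
\item Let $B\subseteq A$ be the finite set of values of all subterms of $s$ and $t$ at $\bar a$.
\item Regard $B$ as a finite partial subalgebra: an operation is defined on a tuple from $B$ whenever its value in $A$ lies in $B$.
\item By FEP, $B$ embeds into some finite $C\in\mathcal V$ via a map preserving all defined operations.
\item By induction on subterms, the image of $\bar a$ computes $s$ and $t$ in $C$ exactly as in $A$.
\item Since the embedding is injective, $s^C\ne t^C$, contradicting that $C$ satisfies $s\approx t$.
\end{itemize}
Hence $\operatorname{Id}(\mathcal V)=\operatorname{Id}(\mathcal V_{\mathrm{fin}})$, and $\mathcal V=\operatorname{HSP}(\mathcal V_{\mathrm{fin}})$.

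\emph{Step 2: residual finiteness of $F_{\mathcal V}(n)$.} It is convenient to identify $F_{\mathcal V}(n)$ with $F_{\mathcal V_{\mathrm{fin}}}(n)$, the term algebra modulo the identities of $\mathcal V_{\mathrm{fin}}$; this is legitimate by Step 1. Take distinct elements represented by terms $s,t$. Then $s\approx t$ fails in some finite $C\in\mathcal V$ at some assignment $\bar c$. The assignment $x_i\mapsto c_i$ induces a homomorphism $h:F_{\mathcal V}(n)\to C$ with $h(s)\ne h(t)$. Its kernel is a finite-index congruence separating the two elements, so finite-index congruences separate points.

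\emph{Step 3: the profinite completion.} Write $\widehat F=\varprojlim F/\theta$, with $\theta$ ranging over the finite-index congruences, directed by intersection, and let $\eta:F\to\widehat F$ be the canonical map.
\begin{itemize}
\item By Step 2, $\ker\eta=\bigcap\theta$ is the identity relation, so $\eta$ is injective.
\item Density follows as in the proof of Lemma~\ref{lem:standard_profinite_criterion}. Basic open sets of $\widehat F$ are fixed by prescribing a coordinate in a single $F/\theta$, by directedness. Since each projection $F\to F/\theta$ is surjective, every basic open set meets $\eta[F]$.
\end{itemize}

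The only delicate point is Step 1. Care is needed to define the partial algebra so that the embedding supplied by FEP really preserves the evaluation of $s$ and $t$; taking all subterm values solves this. The remaining steps are routine.
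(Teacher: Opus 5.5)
Your proof is correct and follows essentially the same route as the paper. Both arguments transfer a failing identity to a finite member via FEP applied to the finite partial subalgebra of subterm values, and then use the universal property of $F_{\mathcal V}(n)$ to obtain separating finite quotients; the only difference is that you spell out injectivity and density of $\eta$ via directedness of the finite-index congruences, which the paper simply cites as the standard inverse-limit property.
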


\begin{proof}
Suppose an identity $s\approx t$ fails in some $A\in\mathcal V$ under an assignment $\bar a$.  Take the finite partial subalgebra of $A$ consisting of the values of all subterms of $s$ and $t$ under $\bar a$, with operations defined where the relevant subterm calculations require them.  By FEP, this finite partial algebra embeds into a finite member $B$ of $\mathcal V$.  The images of the chosen generators in $B$ still separate the values of $s$ and $t$, so $s\approx t$ fails in a finite member of $\mathcal V$.  Thus every identity true in all finite members of $\mathcal V$ is true in $\mathcal V$.

If two elements of $F_{\mathcal V}(n)$ are distinct, the corresponding identity in $n$ variables is not valid in $\mathcal V$.  Since $\mathcal V$ is generated by its finite members, the identity fails in a finite member of $\mathcal V$, and the universal property of the free algebra gives a finite quotient separating the two elements.  This is residual finiteness.  The statement about the profinite completion is the standard inverse-limit property of residually finite algebras.
\end{proof}

\begin{corollary}\label{cor:fep-free-main}
For each of $\HA,\MTL,\BL$, and $\MV$, every finitely generated free algebra is residually finite and embeds densely into its profinite completion.
\end{corollary}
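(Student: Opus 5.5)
The plan is to derive Corollary~\ref{cor:fep-free-main} directly from Proposition~\ref{prop:fep-free-resfinite}. The only thing to supply is the fact that each of $\HA$, $\MTL$, $\BL$, and $\MV$ has the finite embeddability property. For this I will cite \cite[Sec.~6.5]{GalatosJipsenKowalskiOno2007}, as recalled at the start of Section~\ref{sec:standard-varieties}. Once FEP is available, Proposition~\ref{prop:fep-free-resfinite} applies to each variety $\mathcal V$ in the list. It shows that $\mathcal V$ is generated by its finite members, and then that $F_{\mathcal V}(n)$ is residually finite for every finite $n$.

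Next I will check the two conclusions about the canonical map
\[
 \iota:F_{\mathcal V}(n)\to\widehat{F_{\mathcal V}(n)}=\varprojlim F_{\mathcal V}(n)/\theta,
\]
where $\theta$ ranges over the finite-index congruences. These congruences form a family that is downward directed, since the intersection of two finite-index congruences again has finite index.

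\emph{Injectivity.} Residual finiteness says these congruences intersect to the identity relation. So any two distinct elements are separated by some $\theta$, and hence have distinct images under $\iota$.

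\emph{Density.} A basic open set of the inverse limit is determined by finitely many coordinates. Because the family is directed, such a set can be taken to be the preimage of a single point of one finite quotient $F_{\mathcal V}(n)/\theta$. The projection $F_{\mathcal V}(n)\to F_{\mathcal V}(n)/\theta$ is surjective, so every nonempty basic open set meets $\iota[F_{\mathcal V}(n)]$.

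There is no real obstacle here; this is the routine inverse-limit argument already indicated at the end of the proof of Proposition~\ref{prop:fep-free-resfinite}. The one point that needs care is the citation. I must confirm that FEP for each of the four varieties is stated in the bounded commutative integral signature fixed in Section~2. I must also confirm that the constant $0$ does not disturb the partial-subalgebra embedding used in Proposition~\ref{prop:fep-free-resfinite}.

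For $\HA$ and $\MV$, FEP is classical: for Heyting algebras via McKay's finite model argument, and for MV-algebras via the finite-model property of \L ukasiewicz logic and the chain representation. For $\MTL$ and $\BL$ it is the Blok--van Alten-type result for prelinear integral varieties. If a direct reference in the bounded signature is missing, I would simply note the following: adding the constant $0$ to a partial subalgebra and requiring the embedding to preserve it is harmless. The finite algebras constructed in those proofs are bounded, and the embeddings can be taken to preserve both bounds.
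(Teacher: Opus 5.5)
Your proposal follows the paper's proof. Both get FEP for $\HA$, $\MTL$, $\BL$, and $\MV$ from the finite-embeddability results of Galatos--Jipsen--Kowalski--Ono (Sec.~6.5) and then apply Proposition~\ref{prop:fep-free-resfinite}; your explicit injectivity and density check just unpacks the ``standard inverse-limit property'' invoked at the end of that proposition, and your bounded-signature fallback is unneeded since those results are stated for $\mathsf{FL}_{ew}$-algebras, exactly the signature fixed in Section~2.
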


\begin{proof}
The finite embeddability theorem recorded in \cite[Theorem~6.51 and Corollary~6.52]{GalatosJipsenKowalskiOno2007} covers Heyting algebras and the standard $\MTL$-varieties, while \cite[Theorems~6.54 and 6.55]{GalatosJipsenKowalskiOno2007} gives FEP for $\BL$ and $\MV$.  Apply Proposition \ref{prop:fep-free-resfinite}.
\end{proof}

\begin{corollary}[Stone equals profinite in the standard varieties]
\label{cor:standard-varieties}
Let $\mathcal V$ be any subvariety of bounded commutative integral residuated lattices, in particular any of the standard varieties
\[
 \HA,
 \GA,
 \MTL,
 \BL,
 \MV,
 \MV_n,
 \BA.
\]
If $A\in\mathcal V$ is a topological algebra whose underlying space is Stone, then
\[
 A\cong\varprojlim_{F\in\mathsf{ClFilt}(A)} A/\theta_F
\]
as a topological algebra in $\mathcal V$.  Each quotient $A/\theta_F$ is finite and belongs to $\mathcal V$.
\end{corollary}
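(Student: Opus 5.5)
The plan is to deduce the statement directly from Corollary~\ref{cor:intrinsic-filter-limit}, which already contains the essential content, and then check that nothing is lost when passing to a subvariety. Let $\mathcal V$ be a subvariety of $\RL$ and $A\in\mathcal V$ a topological residuated lattice whose underlying space is Stone. By Theorem~\ref{thm:stone-profinite-rl} the algebra $A$ is profinite. By Corollary~\ref{cor:intrinsic-filter-limit} the family $\mathsf{ClFilt}(A)$ of clopen filters is directed under reverse inclusion, each $\theta_F=e^{-1}(F)$ is a clopen congruence of finite index, and the canonical map
\[
 \eta:A\longrightarrow\varprojlim_{F\in\mathsf{ClFilt}(A)}A/\theta_F
\]
is a continuous bijective homomorphism. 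Here the point separation comes from the proof of Theorem~\ref{thm:stone-profinite-rl}, via Lemma~\ref{lem:one-class-filter} and the semiring input Theorem~\ref{thm:SZ-semiring}.

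Next I will verify that $\eta$ is an isomorphism of topological algebras. A continuous bijection from a compact space onto a Hausdorff space is a homeomorphism. The inverse limit of finite discrete spaces is Hausdorff, so $\eta^{-1}$ is continuous. Since $\eta$ is a homomorphism in the full signature $(\wedge,\vee,\odot,\to,0,1)$, it is an isomorphism of topological residuated lattices.

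Finally I will check membership in $\mathcal V$. Each $A/\theta_F$ is a homomorphic image of $A\in\mathcal V$, so it lies in $\mathcal V$ because varieties are closed under $H$. The inverse limit is a subalgebra of the product $\prod_F A/\theta_F$, so it lies in $\mathcal V$ by closure under $S$ and $P$. Thus the displayed isomorphism holds inside $\mathcal V$, and the listed varieties $\HA,\GA,\MTL,\BL,\MV,\MV_n,\BA$ are special cases.

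I do not expect a real obstacle here, since the substantive step — turning semiring congruences into full residuated-lattice congruences through clopen filters — was already handled in Proposition~\ref{prop:cofinal-lifting} and Theorem~\ref{thm:stone-profinite-rl}. The only points that need care are that the inverse limit is taken in the full signature, so that $\to$ and $\wedge$ are respected, and that the quotients are genuinely finite. The latter holds because a clopen equivalence relation on a compact space has finitely many classes: the classes are open and cover $A$.
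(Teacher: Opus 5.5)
Your proposal is correct and takes the same route as the paper. The paper's proof is just Corollary~\ref{cor:intrinsic-filter-limit} together with closure of varieties under homomorphic images. Your extra checks (the compact-to-Hausdorff homeomorphism argument, and closure of the inverse limit under $S$ and $P$) are either already contained in that corollary or redundant, since the limit is isomorphic to $A\in\mathcal V$, but they are harmless.
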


\begin{proof}
This is Corollary \ref{cor:intrinsic-filter-limit}, using the fact that varieties are closed under homomorphic images.
\end{proof}

\begin{corollary}[Finite-valued MV-collapse]
\label{cor:mv-collapse-representability}
For each finite-valued MV-variety $\MV_n$, the $\MV_n$-representable posets are exactly the Priestley spectra of Boolean algebras, i.e. Stone spaces with equality order.  Consequently, finite-valued many-valued structure is invisible to the ordered reticulation spectrum: it collapses to Boolean reticulation.
\end{corollary}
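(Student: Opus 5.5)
The plan is to combine Proposition~\ref{prop:finite-valued-mv-ret} with Mure\c{s}an's spectral theorem (Theorem~\ref{thm:muresan-spectrum}). "$\MV_n$-representable" is read as: a poset, or ordered Priestley space, is isomorphic to $\Spec_{\RL}(A)$ for some $A\in\MV_n$. The corollary then reduces to identifying the Priestley spectra of the lattices in $\Ret(\MV_n)$.

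First, take $A\in\MV_n$. By Theorem~\ref{thm:muresan-spectrum}, $\lambda_A^{-1}$ identifies $\Spec(\mathcal L(A))$ with $\Spec_{\RL}(A)$, both as a poset and as a topological space. Proposition~\ref{prop:finite-valued-mv-ret} says $\mathcal L(A)$ is a Boolean algebra. The Priestley space of a Boolean algebra $B$ is its Stone space with the trivial (equality) order, because every prime filter of $B$ is an ultrafilter and hence maximal. So $\Spec_{\RL}(A)$ is a Stone space with equality order.

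Conversely, let $X$ be a Stone space with equality order. Stone duality gives $X\cong\Spec(B)$ for $B$ the Boolean algebra of clopens of $X$. Proposition~\ref{prop:finite-valued-mv-ret}, or directly Proposition~\ref{prop:idempotent-heyting}, gives $B\in\MV_n$ with $\mathcal L(B)\cong B$. Theorem~\ref{thm:muresan-spectrum} then yields $\Spec_{\RL}(B)\cong X$.

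For the "consequently" clause, I will remark that $\mathcal L(\mathbf L_{n+1})\cong\mathbf 2$ by Lemma~\ref{lem:finite-mv-chain-ret}. Hence the spectrum cannot distinguish $\mathbf L_{n+1}$ from $\mathbf 2$, and every $\MV_n$-algebra has the same spectrum as the Boolean algebra $\mathcal L(A)$, itself an $\MV_n$-member.

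No step is a genuine obstacle. The only point needing care is to state the representability notion consistently, as an order-isomorphism or as an isomorphism of ordered spaces. Theorem~\ref{thm:muresan-spectrum} supplies both, so both readings are covered. If one wants only bare posets, the answer is "antichains arising as Stone spaces", which is all antichains. Any set is the underlying set of the discrete space, but an infinite discrete space is not Stone (not compact). So every antichain $S$ occurs instead as the underlying set of $\beta S$ when $|S|=2^{2^\kappa}$; this subtlety is avoided by stating the result topologically, as in the corollary.
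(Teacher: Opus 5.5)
Your proof is correct and is essentially the paper's argument. The paper cites Theorem~\ref{thm:gratz-rl}, whose (i)$\Leftrightarrow$(ii) is exactly the Mure\c{s}an correspondence of Theorem~\ref{thm:muresan-spectrum} that you unpack, together with $\Ret(\MV_n)=\BA$ from Proposition~\ref{prop:finite-valued-mv-ret}. One correction to your closing aside: the $\beta S$ argument fails, since cardinals of the form $2^{2^\kappa}$ miss, for example, $3$ and $\aleph_0$. Your conclusion that every antichain occurs is nevertheless true, because finite discrete spaces and one-point compactifications of infinite discrete spaces are Stone spaces of every cardinality.
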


\begin{proof}
This is Theorem \ref{thm:gratz-rl} together with Proposition \ref{prop:finite-valued-mv-ret}.
\end{proof}

\begin{corollary}[Finite prelinear collapse]\label{cor:finite-prelinear-collapse}
The finite spectra arising from finite MTL-algebras and finite BL-algebras are exactly the finite G\"odel spectra.  Thus finite prelinear many-valued multiplication creates no new finite Gr\"atzer spectra beyond the idempotent prelinear, i.e. G\"odel, case.
\end{corollary}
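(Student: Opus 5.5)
The statement is a direct consequence of Theorem~\ref{thm:finite-mtl-bl-ret}, read through the spectral correspondence of Theorem~\ref{thm:muresan-spectrum}. By ``finite G\"odel spectra'' I mean the prime-filter spectra $\Spec_{\RL}(G)$ of finite G\"odel algebras $G$. I would prove both inclusions at the level of posets, for $\MTL$ and $\BL$ simultaneously.

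For the inclusion of MTL/BL spectra into G\"odel spectra, take a finite $\MTL$-algebra (or $\BL$-algebra) $A$. Theorem~\ref{thm:finite-mtl-bl-ret} gives $\mathcal L(A)\in\Ret(\Fin\MTL)=\Ret(\Fin\GA)$. So there is a finite G\"odel algebra $G$ with $\mathcal L(G)\cong\mathcal L(A)$; concretely, $G=\Up(\Spec(\mathcal L(A)))$ works. Theorem~\ref{thm:muresan-spectrum} then gives
\[
\Spec_{\RL}(A)\cong\Spec(\mathcal L(A))\cong\Spec(\mathcal L(G))\cong\Spec_{\RL}(G)
\]
as posets. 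Alternatively, Lemma~\ref{lem:prelinear-root-necessary} shows that $\Spec_{\RL}(A)$ is a finite root system $P$. Lemma~\ref{lem:finite-godel-forest} shows that $\Up(P)$ is a finite G\"odel algebra, and its spectrum is $P$, as in the proof of Corollary~\ref{cor:finite-posets}.

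For the reverse inclusion, let $G$ be a finite G\"odel algebra. Since $\GA\subseteq\BL\subseteq\MTL$, the algebra $G$ is itself a finite $\BL$-algebra and a finite $\MTL$-algebra. Its spectrum therefore occurs in both classes trivially.

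The interpretive sentence (``no new finite Gr\"atzer spectra beyond the G\"odel case'') is just a restatement of this equality of the three classes of finite posets. They are all equal to the class of finite root systems, since finite spectra are finite posets and Theorem~\ref{thm:finite-mtl-bl-ret} identifies that class. I expect no genuine obstacle. The only point needing care is the identification, for a finite bounded distributive lattice $D$, of $D\cong\Up(\Spec D)$ and of $\Spec(\Up P)\cong P$, which is finite Priestley (Birkhoff) duality, already used in Corollary~\ref{cor:finite-posets}.
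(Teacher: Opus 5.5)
Your proposal is correct and follows essentially the same route as the paper, which simply combines Theorem~\ref{thm:gratz-rl} with Theorem~\ref{thm:finite-mtl-bl-ret}. Since the (i)$\Leftrightarrow$(ii) part of Theorem~\ref{thm:gratz-rl} is just Theorem~\ref{thm:muresan-spectrum}, your direct use of the spectral isomorphism, together with the trivial inclusion $\GA\subseteq\BL\subseteq\MTL$, is the same argument written out.
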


\begin{proof}
This is Theorem \ref{thm:gratz-rl} together with Theorem \ref{thm:finite-mtl-bl-ret}.
\end{proof}

\begin{remark}[A sharp boundary]
The previous corollary should not be confused with the Heyting case.  In $\HA$ and its subvarieties, reticulation is faithful by Proposition \ref{prop:idempotent-heyting}.  In finite-valued $\MV$-varieties, reticulation is highly non-faithful: non-Boolean finite chains still have Boolean reticulation.  This explains why Esakia/DHA methods see exactly the idempotent part, while reticulation is the correct invariant for the broader residuated-lattice setting.
\end{remark}

\section{The Gr\"atzer representation problem for residuated lattices}\label{sec:gratz}

We now formulate the residuated-lattice version of Gr\"atzer's problem.

\begin{definition}\label{def:representable}
Let $\mathcal K$ be a class of residuated lattices.  A poset $X$ is called \emph{$\mathcal K$-representable} if
\[
 X\cong \Spec_{\RL}(A)
\]
as posets for some $A\in\mathcal K$.  For the full class $\mathcal K=\RL$ of residuated lattices, where $\RL$ was fixed in Section~2, we simply say \emph{residuated-lattice representable}.
\end{definition}

We shall use Priestley duality in the following concrete form.  Every bounded distributive lattice $D$ is represented by the ordered Stone space $\Spec(D)$ of prime filters, and $D$ is recovered as the bounded lattice of clopen upsets of this space.  Conversely, if $(X,\tau,\le)$ is a Priestley space, then $\ClopUp(X,\tau)$ is a bounded distributive lattice whose prime-filter spectrum is order-homeomorphic to $X$ \cite{Priestley1970,Priestley1972}.  In the finite case this reduces to the familiar identification of a finite poset $P$ with the prime-filter spectrum of the up-set lattice $\Up(P)$.

\begin{theorem}[Reticulation form of Gr\"atzer representability]\label{thm:gratz-rl}
Let $\mathcal K$ be any class of residuated lattices and let $X$ be a poset.  The following are equivalent.
\begin{enumerate}
\item $X$ is $\mathcal K$-representable.
\item $X\cong\Spec(D)$ for some $D\in\Ret(\mathcal K)$.
\item There is a Priestley topology $\tau$ on $X$ such that
\[
 \ClopUp(X,\tau)\in\Ret(\mathcal K),
\]
where $\ClopUp(X,\tau)$ is the bounded distributive lattice of clopen upsets of the Priestley space $(X,\tau,\le)$.
\end{enumerate}
\end{theorem}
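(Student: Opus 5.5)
The plan is to prove the cycle (i)$\Rightarrow$(ii)$\Rightarrow$(iii)$\Rightarrow$(i). The first and last implications go through Mure\c{s}an's spectral theorem (Theorem~\ref{thm:muresan-spectrum}); the middle one goes through Priestley duality in the concrete form recalled just before the statement.

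For (i)$\Rightarrow$(ii), suppose $X\cong\Spec_{\RL}(A)$ as posets for some $A\in\mathcal K$. Take the reticulation $(\mathcal L(A),\lambda_A)$. By Theorem~\ref{thm:muresan-spectrum}, the inverse-image map $\lambda_A^{-1}$ is an order isomorphism $\Spec(\mathcal L(A))\to\Spec_{\RL}(A)$. Composing gives $X\cong\Spec(D)$ with $D=\mathcal L(A)$. By Definition~\ref{def:ret-image}, $D\in\Ret(\mathcal K)$.

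For (ii)$\Rightarrow$(iii), fix an order isomorphism $\varphi:X\to\Spec(D)$ with $D\in\Ret(\mathcal K)$.
\begin{enumerate}
\item Give $\Spec(D)$ its Priestley topology, so $(\Spec(D),\le)$ is a Priestley space.
\item Transport that topology along $\varphi$ to a topology $\tau$ on $X$. Since $\varphi$ is an order isomorphism, $(X,\tau,\le)$ is a Priestley space order-homeomorphic to $\Spec(D)$.
\item Priestley duality recovers $D$ as the lattice of clopen upsets of $\Spec(D)$. Hence $\ClopUp(X,\tau)\cong\ClopUp(\Spec(D))\cong D$.
\item Since $\Ret(\mathcal K)$ is closed under isomorphism by definition, $\ClopUp(X,\tau)\in\Ret(\mathcal K)$.
\end{enumerate}

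For (iii)$\Rightarrow$(i), let $D=\ClopUp(X,\tau)\in\Ret(\mathcal K)$. Then $D\cong\mathcal L(A)$ for some $A\in\mathcal K$. The converse half of Priestley duality gives that $\Spec(D)$ is order-homeomorphic, in particular order-isomorphic, to $(X,\le)$. An isomorphism $D\cong\mathcal L(A)$ of bounded lattices induces an order isomorphism $\Spec(D)\cong\Spec(\mathcal L(A))$. Theorem~\ref{thm:muresan-spectrum} then gives $\Spec(\mathcal L(A))\cong\Spec_{\RL}(A)$. Chaining these isomorphisms yields $X\cong\Spec_{\RL}(A)$, which is (i).

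I expect no genuine obstacle. The only point needing care is bookkeeping about what the isomorphisms preserve. Statement (i) asks only for a poset isomorphism, while (iii) carries a topology. So in (ii)$\Rightarrow$(iii) the topology must be chosen by transport, since it is not given on $X$. In (iii)$\Rightarrow$(i) it suffices to keep the order part of the order-homeomorphisms. One should also note that uniqueness of $\mathcal L(A)$ up to isomorphism makes membership in $\Ret(\mathcal K)$ independent of the chosen reticulation.
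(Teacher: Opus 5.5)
Your proof is correct and uses the same two ingredients as the paper: Mure\c{s}an's spectral isomorphism links (i) and (ii), and Priestley duality links (ii) and (iii). The paper arranges these as two biconditionals rather than a cycle, and your explicit transport of the topology in (ii)$\Rightarrow$(iii) spells out a step the paper leaves implicit.
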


\begin{proof}
The equivalence of (i) and (ii) is exactly Theorem \ref{thm:muresan-spectrum}.  If $X\cong\Spec_{\RL}(A)$ with $A\in\mathcal K$, then
\[
 X\cong\Spec(\mathcal L(A))
\]
and $\mathcal L(A)\in\Ret(\mathcal K)$.  Conversely, if $X\cong\Spec(D)$ with $D\cong\mathcal L(A)$ and $A\in\mathcal K$, then Theorem \ref{thm:muresan-spectrum} yields
\[
 X\cong\Spec(\mathcal L(A))\cong\Spec_{\RL}(A).
\]

The equivalence of (ii) and (iii) is Priestley duality \cite{Priestley1970,Priestley1972}: bounded distributive lattices are dually equivalent to Priestley spaces, and a bounded distributive lattice $D$ is recovered from its Priestley space as the lattice of clopen upsets.
\end{proof}

\begin{corollary}[EDPC/n-potent Gr\"atzer spectra]
\label{cor:edpc-gratz-spectra}
Let $\mathcal V$ be a variety of residuated lattices contained in an $n$-potent variety.  Then a poset $X$ is $\mathcal V$-representable if and only if
\[
 X\cong\Spec(A^{[n]})
\]
for some $A\in\mathcal V$, where $A^{[n]}$ is the power skeleton of Theorem~\ref{thm:npotent-skeleton}.  In particular, if $\mathcal V$ has EDPC, then such an $n$ exists and the $\mathcal V$-version of Gr\"atzer's problem is governed by the internal skeletons $A^{[n]}$, rather than by an external Esakia-style structure.
\end{corollary}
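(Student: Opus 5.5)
The statement is a direct combination of Theorem~\ref{thm:gratz-rl} with the power-skeleton description of reticulation. Throughout, let $\mathcal V$ be contained in an $n$-potent variety, so every $A\in\mathcal V$ satisfies $x^{n+1}=x^n$.

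For the forward direction, suppose $X$ is $\mathcal V$-representable. By Theorem~\ref{thm:gratz-rl} (i)$\Rightarrow$(ii), $X\cong\Spec(D)$ for some $D\in\Ret(\mathcal V)$. Hence $D\cong\mathcal L(A)$ for some $A\in\mathcal V$. Since $A$ is $n$-potent, Theorem~\ref{thm:npotent-skeleton} gives a bounded-lattice isomorphism $\mathcal L(A)\cong A^{[n]}$. Prime-filter spectra are invariant under bounded-lattice isomorphism, so $X\cong\Spec(A^{[n]})$.

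For the converse, suppose $X\cong\Spec(A^{[n]})$ with $A\in\mathcal V$. Theorem~\ref{thm:npotent-skeleton} says that $\sigma_n:A\to A^{[n]}$ is a reticulation, so $A^{[n]}\cong\mathcal L(A)$ and therefore $A^{[n]}\in\Ret(\mathcal V)$. Theorem~\ref{thm:gratz-rl} (ii)$\Rightarrow$(i) now shows that $X$ is $\mathcal V$-representable. Alternatively, one can invoke Theorem~\ref{thm:muresan-spectrum} directly through the reticulation $\sigma_n$, which gives $\Spec(A^{[n]})\cong\Spec_{\RL}(A)$. Corollary~\ref{cor:npotent-image-skeleton} packages both directions at once, since it identifies $\Ret(\mathcal V)$ with $\{A^{[n]}:A\in\mathcal V\}$.

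For the EDPC clause, I would invoke the DPC/EDPC characterization quoted in Corollary~\ref{cor:edpc-reticulation}: a subvariety of $\mathsf{FL}_{ew}$ has EDPC iff it is contained in an $n$-potent variety for some $n$. This supplies the required $n$, and the first part then applies.

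The only delicate point is applying $n$ uniformly across $\mathcal V$. It matters that $\mathcal V$ lies inside one fixed $n$-potent variety rather than each member being $k$-potent for its own $k$, because the skeleton $A^{[n]}$ must be formed with the same exponent for every $A\in\mathcal V$. This uniformity is exactly what the hypothesis, or EDPC, provides. Everything else is formal transport of spectra along isomorphisms.
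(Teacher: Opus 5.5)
Your proposal is correct and follows the paper's proof: the paper also combines Theorem~\ref{thm:gratz-rl} with Corollary~\ref{cor:npotent-image-skeleton}, that is, with $\mathcal L(A)\cong A^{[n]}$ from Theorem~\ref{thm:npotent-skeleton}, and it obtains the EDPC clause from the DPC/EDPC characterization cited in Corollary~\ref{cor:edpc-reticulation}. Your point that a single exponent $n$ must work for all of $\mathcal V$ is a useful clarification, which the paper leaves implicit.
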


\begin{proof}
By Theorem~\ref{thm:gratz-rl}, $X$ is $\mathcal V$-representable iff it is the Priestley spectrum of a lattice in $\Ret(\mathcal V)$.  If $\mathcal V$ is $n$-potent, Corollary~\ref{cor:npotent-image-skeleton} gives
\[
 \Ret(\mathcal V)=\{A^{[n]}:A\in\mathcal V\}.
\]
The EDPC assertion follows from Corollary~\ref{cor:edpc-reticulation}.
\end{proof}

\begin{remark}
Corollary~\ref{cor:edpc-gratz-spectra} is the precise mechanism by which DPC/EDPC enters the second problem of the paper.  EDPC does not provide a new Esakia duality for arbitrary residuated lattices; rather, through $n$-potency, it removes the existential exponent in reticulation and turns the distributive-lattice spectrum into the spectrum of an internal power skeleton.
\end{remark}

\begin{theorem}[Sharpness of the reticulation-image reduction]
\label{thm:reticulation-image-sharpness}
Let \(\mathcal C\) be an isomorphism-closed class of finite bounded distributive lattices.  Then there is a class \(\mathcal K_{\mathcal C}\) of finite residuated lattices such that
\[
 \Ret(\mathcal K_{\mathcal C})=\mathcal C.
\]
Equivalently, up to isomorphism, arbitrary finite bounded-distributive-lattice data can occur as a finite reticulation image.  Consequently, in the finite setting there is no non-trivial class-independent order-theoretic replacement for the invariant \(\Ret(\mathcal K)\).
\end{theorem}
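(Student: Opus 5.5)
The plan is to take $\mathcal K_{\mathcal C}$ to be the class of finite bounded distributive lattices in $\mathcal C$, each regarded as a finite Heyting algebra and hence as a finite idempotent residuated lattice. Concretely, for each $D\in\mathcal C$ the relative pseudocomplement
\[
 a\Rightarrow b=\bigvee\{c\in D:a\wedge c\le b\}
\]
exists because $D$ is finite, exactly as in the proof of Proposition~\ref{prop:finite-image}. Setting $x\odot y=x\wedge y$ and $x\to y=x\Rightarrow y$ then turns $D$ into a finite residuated lattice; call it $H_D$. Put
\[
 \mathcal K_{\mathcal C}=\{H_D:D\in\mathcal C\},
\]
closed under isomorphism if desired. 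Since the Heyting implication on a finite distributive lattice is uniquely determined by the order, $H_D$ is determined by $D$ up to isomorphism.

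The first inclusion is $\Ret(\mathcal K_{\mathcal C})\subseteq\mathcal C$. Each $H_D$ is idempotent, so Proposition~\ref{prop:idempotent-heyting} shows that the reticulation map is a bounded-lattice isomorphism $H_D\cong\mathcal L(H_D)$. The bounded-lattice reduct of $H_D$ is $D$ itself, so $\mathcal L(H_D)\cong D\in\mathcal C$. Because $\mathcal C$ is isomorphism-closed, every member of $\Ret(\mathcal K_{\mathcal C})$ lies in $\mathcal C$.

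The reverse inclusion $\mathcal C\subseteq\Ret(\mathcal K_{\mathcal C})$ is immediate: every $D\in\mathcal C$ satisfies $D\cong\mathcal L(H_D)$ with $H_D\in\mathcal K_{\mathcal C}$. Alternatively, the whole statement is the special case of Corollary~\ref{cor:ret-ha} for the class $\mathcal V=\{H_D:D\in\mathcal C\}$ of Heyting algebras, since the bounded-lattice reducts of the members of $\mathcal V$ are exactly the members of $\mathcal C$ up to isomorphism.

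The "consequently" clause then follows by combining this equality with Theorem~\ref{thm:gratz-rl}. The finite $\mathcal K_{\mathcal C}$-representable posets are exactly the spectra $\Spec(D)$ with $D\in\mathcal C$. By finite Priestley duality, $D\cong\Up(\Spec D)$, so an arbitrary isomorphism-closed class of finite posets is realized as the class of $\mathcal K$-representable posets for some class $\mathcal K$ of finite residuated lattices. Hence no property weaker than membership in $\Ret(\mathcal K)$ can serve as a class-independent criterion.

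There is no real obstacle here. The only point needing care is the existence and uniqueness of the Heyting structure on a finite distributive lattice, which was already checked in Proposition~\ref{prop:finite-image}, together with keeping track of the isomorphism-closure conventions.
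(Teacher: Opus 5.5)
Your proposal is correct and follows the paper's proof essentially verbatim: you regard each finite $D\in\mathcal C$ as a finite Heyting algebra with $\odot=\wedge$, invoke Proposition~\ref{prop:idempotent-heyting} to get $\mathcal L(D)\cong D$, and use isomorphism-closure of $\mathcal C$ to conclude. Your poset-level reading of the ``consequently'' clause, via Theorem~\ref{thm:gratz-rl} and $D\cong\Up(\Spec D)$, is a slightly more explicit version of the paper's informal final remark.
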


\begin{proof}
Every finite bounded distributive lattice is a Heyting algebra, since relative pseudocomplements exist in finite distributive lattices.  Thus every \(D\in\mathcal C\) may be regarded as an idempotent residuated lattice by putting
\[
 x\odot y=x\wedge y.
\]
By Proposition~\ref{prop:idempotent-heyting}, the reticulation of this idempotent residuated lattice is just its bounded-lattice reduct.  Let
\[
 \mathcal K_{\mathcal C}=\{D:D\in\mathcal C\},
\]
where the members are viewed as finite idempotent residuated lattices.  Since \(\mathcal C\) is isomorphism-closed,
\[
 \Ret(\mathcal K_{\mathcal C})=\mathcal C.
\]
The final assertion follows because an arbitrary isomorphism-closed class of finite bounded distributive lattices can occur in this way.  Thus a classwise finite spectrum theorem for residuated lattices cannot be made independent of the reticulation image.
\end{proof}

\begin{remark}
Theorem~\ref{thm:reticulation-image-sharpness} is the precise sense in which the answer to the residuated-lattice version of Gr\"atzer's problem is optimal.  Theorem~\ref{thm:gratz-rl} reduces the problem to Priestley spectra of \(\Ret(\mathcal K)\); Priestley duality \cite{Priestley1970,Priestley1972} and Speed's profinite-poset viewpoint \cite{Speed1972} explain why this is the right ordered-Stone-space level.  The sharpness theorem says that, even in the finite case, \(\Ret(\mathcal K)\) can be as arbitrary as a class of finite bounded distributive lattices.  The concrete computations below should therefore be read not as samples of a hidden universal forbidden-poset theorem, but as exact calculations of the invariant in algebraically important cases.
\end{remark}

\begin{remark}[Relation with the MV-spectrum problem]
The sharpness theorem should be compared with the MV-spectrum problem.  For MV-algebras, Mundici's equivalence relates spectra to Abelian $\ell$-groups with strong unit, and the corresponding lattice-dual problem is formulated in terms of the principal $\ell$-ideal lattices $\Id_c G$.  Plo\v{s}\v{c}ica and Wehrung proved that, up to cardinality $\aleph_1$, the relevant distributive $0$-lattices are exactly the completely normal lattices with countably based differences; moreover, the cardinal bound is sharp \cite{PloscicaWehrung2024}.  This theorem is not a consequence of the finite reticulation calculations in the present paper.  Rather, it shows that in natural infinite classes the reticulation image can encode genuinely deep representation-theoretic information.
\end{remark}

\begin{corollary}[Prime-ideal version]\label{cor:ideal-version}
Let $X$ be a poset ordered as a poset of prime ideals.  Then $X$ is representable as the prime-ideal poset of the reticulation of some $A\in\mathcal K$ if and only if the order dual $X^{\partial}$ is $\mathcal K$-representable in the prime-filter sense of Definition \ref{def:representable}.
\end{corollary}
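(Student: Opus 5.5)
The plan is to reduce the statement to Theorem~\ref{thm:gratz-rl} by order-dualizing the correspondence between prime ideals and prime filters of a bounded distributive lattice. For any bounded distributive lattice $D$, the map $I\mapsto D\setminus I$ is a bijection from prime ideals to prime filters. It reverses inclusion: if $I\subseteq J$ then $D\setminus J\subseteq D\setminus I$. Hence the prime-ideal poset $\mathcal P(D)$ is order-isomorphic to $\Spec(D)^{\partial}$. I would verify this first. The complement of a prime ideal is a prime filter, and conversely, by the usual two-line check using that $D$ is a lattice with $0\ne 1$ and that properness and primeness are exchanged under complementation.

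Next I would unwind the left-hand side of the statement. $X$ is the prime-ideal poset of $\mathcal L(A)$ for some $A\in\mathcal K$ if and only if $X\cong\mathcal P(\mathcal L(A))\cong\Spec(\mathcal L(A))^{\partial}$. This holds if and only if $X^{\partial}\cong\Spec(\mathcal L(A))$, since order duality is an involution compatible with isomorphism. Thus the left-hand side is equivalent to $X^{\partial}\cong\Spec(D)$ for some $D\in\Ret(\mathcal K)$. By the equivalence (i)$\Leftrightarrow$(ii) of Theorem~\ref{thm:gratz-rl}, this is exactly the condition that $X^{\partial}$ is $\mathcal K$-representable in the sense of Definition~\ref{def:representable}. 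Equivalently, one can invoke Theorem~\ref{thm:muresan-spectrum} directly to replace $\Spec(\mathcal L(A))$ by $\Spec_{\RL}(A)$.

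No step here is a genuine obstacle. The only point needing care is the bookkeeping of the order reversal. I must check that complementation sends the ideal inclusion order to the reverse of the filter inclusion order, so that the dual lands on $X$ and not on $X^{\partial}$ twice. I must also make sure that "representable as the prime-ideal poset of the reticulation" quantifies over $D\cong\mathcal L(A)$ up to isomorphism, which matches the isomorphism-closed definition of $\Ret(\mathcal K)$.
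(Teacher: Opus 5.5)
Your proposal is correct and follows the same route as the paper. The inclusion-reversing complementation bijection $I\mapsto D\setminus I$ between prime ideals and prime filters of $D=\mathcal L(A)$ shows that the ideal formulation is the order dual of the filter formulation, and Theorem~\ref{thm:gratz-rl} (equivalently Theorem~\ref{thm:muresan-spectrum}) then identifies the latter with $\mathcal K$-representability. You simply spell out the verification and the isomorphism bookkeeping that the paper leaves implicit.
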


\begin{proof}
In a bounded distributive lattice, prime ideals and prime filters correspond by complementation, and inclusion of ideals is reversed under the corresponding filter order.  Thus the filter-spectrum formulation is the order dual of the ideal-spectrum formulation.
\end{proof}

\begin{corollary}\label{cor:classwise-rep}
The following classwise descriptions hold.
\begin{enumerate}
\item $\Fin\RL$-representable posets are exactly the finite posets.
\item $\Fin\MTL$- and $\Fin\BL$-representable posets are exactly the finite root systems (equivalently, finite forests in the order-theoretic convention used for Esakia spaces).
\item $\Ch\RL$-representable posets are exactly the prime spectra of bounded distributive chains.
\item $\MV_n$-representable posets are exactly the underlying ordered sets of Stone spaces with equality order, equivalently the prime spectra of Boolean algebras.  In particular, finite $\MV_n$-representable posets are exactly finite antichains.
\item If $\mathcal V$ is a Heyting variety, then $\mathcal V$-representability as residuated lattices is exactly the usual Heyting/Esakia representability for $\mathcal V$.
\end{enumerate}
\end{corollary}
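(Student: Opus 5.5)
Each item of Corollary~\ref{cor:classwise-rep} follows by the same two-step pattern. First, Theorem~\ref{thm:gratz-rl} reduces $\mathcal K$-representability of a poset $X$ to the condition $X\cong\Spec(D)$ for some $D\in\Ret(\mathcal K)$. Second, we insert the reticulation image already computed for the class in question and read off the spectra via Priestley duality.

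For (i) we use Proposition~\ref{prop:finite-image}, which gives $\Ret(\Fin\RL)=\Fin\BDL$. Spectra of finite bounded distributive lattices are finite posets, and every finite poset $P$ arises as $\Spec(\Up(P))$, as shown in Corollary~\ref{cor:finite-posets}.

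For (ii) we use Theorem~\ref{thm:finite-mtl-bl-ret}, which gives $\Ret(\Fin\MTL)=\Ret(\Fin\BL)=\Fin\Root\BDL$. By definition, the spectra of members of $\Fin\Root\BDL$ are finite root systems. Conversely, a finite root system $P$ is the spectrum of $\Up(P)$, and $\Up(P)$ is Gödel by Lemma~\ref{lem:finite-godel-forest}. The parenthetical "forest" is only a renaming of the same convention: every principal upset is a chain.

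For (iii) we use Proposition~\ref{prop:chain-image}, which gives $\Ret(\Ch\RL)=\Ch\BDL$; the statement is then Theorem~\ref{thm:gratz-rl}(ii) verbatim.

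For (iv) we use Proposition~\ref{prop:finite-valued-mv-ret}, which gives $\Ret(\MV_n)=\BA$. Prime filters of a Boolean algebra are maximal, so the spectrum carries the equality order, and its Priestley topology is the Stone topology. Conversely, every Stone space with equality order is $\Spec$ of its Boolean algebra of clopens. For the finite refinement, a finite poset in this class is an antichain, and every finite antichain of size $k$ is $\Spec(\mathbf 2^k)$, with $\mathbf 2^k\in\BA\subseteq\MV_n$. The size-$0$ antichain comes from the trivial algebra, if that is admitted; we will add a remark on this degenerate convention.

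For (v) we use Corollary~\ref{cor:ret-ha} together with Proposition~\ref{prop:idempotent-heyting}. For a Heyting algebra $H$, the map $\lambda_H$ is a lattice isomorphism, so $\Spec_{\RL}(H)\cong\Spec(H)$. Here $\Spec(H)$ is the Esakia dual's underlying poset, since the prime filters of $H$ as a residuated lattice and as a lattice coincide when $\odot=\wedge$. Hence $\mathcal V$-representability in the residuated-lattice sense is the usual Heyting/Esakia representability.

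The only step needing care is (iv). There one should make explicit that "underlying ordered sets of Stone spaces with equality order" means posets with trivial order whose cardinality is realizable as the point set of some Stone space; equivalently, these are exactly the spectra of Boolean algebras. I would state this identification directly rather than try to characterize the realizable cardinalities, so that the clause becomes tautologically equivalent to "prime spectra of Boolean algebras". All other items are direct substitutions into Theorem~\ref{thm:gratz-rl}.
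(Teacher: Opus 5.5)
Your proposal is correct and follows the paper's own proof, which likewise substitutes the previously computed reticulation images (Propositions~\ref{prop:finite-image}, \ref{prop:chain-image}, \ref{prop:finite-valued-mv-ret}, Theorem~\ref{thm:finite-mtl-bl-ret}, Lemma~\ref{lem:finite-godel-forest}, Corollary~\ref{cor:ret-ha}, and Corollary~\ref{cor:finite-posets} for (i)) into Theorem~\ref{thm:gratz-rl}. For (iv), the cardinality question you set aside is actually trivial: every set carries a Stone topology (discrete if finite, and otherwise the one-point compactification of a discrete space of the same cardinality), so the representable posets there are just the antichains.
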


\begin{proof}
Use Theorem \ref{thm:gratz-rl} together with Propositions \ref{prop:finite-image}, \ref{prop:chain-image}, \ref{prop:finite-valued-mv-ret}, Theorem \ref{thm:finite-mtl-bl-ret}, Lemma \ref{lem:finite-godel-forest}, and Corollary \ref{cor:ret-ha}.  For (i), also use Corollary \ref{cor:finite-posets}.
\end{proof}

\begin{remark}[What this solves, and what it avoids]\label{rem:what-solved}
Theorem \ref{thm:gratz-rl} gives the exact residuated-lattice reduction of Gr\"atzer's Problems 33 and 34.  Since our algebras are bounded, it directly addresses the ``has $0$ and $1$'' form of Problem 34.  It does not assert that the Heyting-algebra description by Esakia spaces and diamond systems is available in arbitrary residuated lattices.  Instead, it identifies the correct invariant: the bounded distributive lattice $\mathcal L(A)$.
\end{remark}

\section{Profinite completions and non-completion obstructions}\label{sec:completions}

We now turn to the third problem announced in the introduction: which profinite residuated lattices arise as profinite completions of abstract residuated lattices?  The completion question belongs to a broader family of problems concerning how algebraic completions interact with order-theoretic and dual-topological completions.  For Heyting algebras, MacNeille completions were described topologically by Harding and Bezhanishvili \cite{HardingBezhanishvili2004}; profinite completions and canonical extensions were compared by Bezhanishvili, Gehrke, Mines, and Morandi \cite{BezhanishviliGehrkeMinesMorandi2006}; and MacNeille, canonical, and profinite completions were systematically compared by Bezhanishvili and Vosmaer \cite{BezhanishviliVosmaer2008}.  Related positive coincidence phenomena occur for finitely generated modal algebras \cite{Vosmaer2009}, while natural extensions and profinite completions of general residually finite algebras are studied in \cite{DaveyGouveiaHaviarPriestley2011}.  In the MV-setting, Nganou's work on profinite and MacNeille completions provides the analogue closest to the non-idempotent component used below \cite{Nganou2018}.  Completion phenomena in non-distributive lattice-based structures, such as orthomodular lattices, provide a useful contrast \cite{BrunsGreechieHardingRoddy1990}.  Finally, non-rigidity of profinite completions is a major theme in group theory as well; the work of Bridson and Grunewald on Grothendieck's problem is a prominent example \cite{BridsonGrunewald2004}.  We do not claim a simple external list of all profinite residuated lattices that are completions.  Instead, we give an intrinsic criterion in terms of dense subalgebras and extension of finite-index congruences, and then combine it with a finite-tag obstruction theory: the Heyting non-completion phenomenon survives in the full residuated-lattice category and even produces genuinely non-idempotent families of examples.

Let $A$ be an abstract residuated lattice.  Its \emph{profinite completion} $\widehat A$ is the inverse limit of its finite homomorphic images, equivalently the completion of $A$ with respect to the uniformity generated by its finite-index congruences.  This finite-quotient uniformity is the residuated-lattice analogue of relative profinite uniformities studied, for example, for varieties of finite monoids \cite{PinSilva2011}.  The canonical homomorphism $\eta_A:A\to\widehat A$ need not be injective; its kernel is the intersection of all finite-index congruences of $A$.

\begin{theorem}[Intrinsic criterion for profinite completions]
\label{thm:intrinsic-completion-criterion}
Let \(P\) be a profinite residuated lattice.  The following are equivalent.
\begin{enumerate}
\item \(P\) is isomorphic, as a topological residuated lattice, to the profinite completion of some residuated lattice.
\item There is a dense residuated sublattice \(A\subseteq P\) such that every finite-index congruence \(\theta\) of \(A\) is induced by a clopen congruence \(\Theta\) of \(P\), i.e.
\[
 \theta=\Theta\cap(A\times A).
\]
\item There is a dense residuated sublattice \(A\subseteq P\) such that the uniformity induced on \(A\) from \(P\) is exactly the profinite uniformity of \(A\), generated by all finite-index congruences of \(A\).
\end{enumerate}
\end{theorem}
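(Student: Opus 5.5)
The plan is to prove the cycle (i)$\Rightarrow$(ii)$\Rightarrow$(i) directly, and then prove (ii)$\Leftrightarrow$(iii) separately. Two facts will be used throughout. First, since $P$ is compact, every clopen congruence $\Theta$ of $P$ has finite index, because $P/\Theta$ is compact and discrete. Second, by Lemma~\ref{lem:standard_profinite_criterion}, the clopen congruences of $P$ form a directed family that separates points. Its entourages therefore generate a compatible uniformity on $P$. Since a compact Hausdorff space carries a unique compatible uniformity, this is \emph{the} uniformity of $P$.

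For (i)$\Rightarrow$(ii), let $P\cong\widehat B$ and put $A=\eta_B[B]$.
\begin{itemize}
\item $A$ is a subalgebra, and it is dense because every projection $\widehat B\to B/\phi$ is onto.
\item Every finite-index congruence of $B$ contains $\ker\eta_B$. Hence the finite-index congruences $\theta$ of $A\cong B/\ker\eta_B$ correspond exactly to finite-index congruences $\phi$ of $B$.
\item Each such $\theta$ is the trace on $A$ of the kernel $\Theta$ of the projection $\widehat B\to B/\phi$. This kernel is clopen because the projection is continuous and $B/\phi$ is discrete.
\end{itemize}

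For (ii)$\Rightarrow$(i), take $A$ as in (ii). Each clopen $\Theta$ restricts to a finite-index congruence $\Theta\cap A^2$ of $A$. Consequently the inclusion $A\hookrightarrow P$ is uniformly continuous from the profinite uniformity of $A$ to the uniformity of $P$. It therefore extends to a continuous map $\Phi:\widehat A\to P$, and $\Phi$ is a homomorphism by density and continuity of the operations. $\Phi$ is onto because its image is compact, hence closed, and contains the dense set $A$.

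The main step is injectivity of $\Phi$. Two distinct points of $\widehat A$ are separated by some projection $\pi_\theta:\widehat A\to A/\theta$ with $\theta$ of finite index. By (ii), $\theta=\Theta\cap A^2$ for a clopen $\Theta$. The induced map $A/\theta\to P/\Theta$ is injective, and it is onto because every $\Theta$-class is open and hence meets the dense set $A$. Let $q_\Theta:P\to P/\Theta$ be the quotient map and $\iota:A/\theta\cong P/\Theta$ this isomorphism. The two continuous maps $q_\Theta\circ\Phi$ and $\iota\circ\pi_\theta$ agree on the dense image of $A$, so they agree everywhere. Hence $\Phi$ separates the two points. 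Finally, $\Phi$ is a continuous bijection from a compact space to a Hausdorff space, so it is an isomorphism of topological residuated lattices.

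For (ii)$\Leftrightarrow$(iii), the uniformity induced on $A$ is generated by the traces $\Theta\cap A^2$. Each trace is a finite-index congruence of $A$, so the induced uniformity is always coarser than the profinite one. Condition (ii) says every generator of the profinite uniformity is such a trace, which gives equality.

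Conversely, assume (iii) and fix a finite-index $\theta$.
\begin{itemize}
\item The quotient map $A\to A/\theta$, with $A/\theta$ finite discrete, is uniformly continuous for the induced uniformity.
\item Since $P$ is complete and $A$ is dense, this map extends to a continuous $f:P\to A/\theta$.
\item $f$ is a homomorphism by density, because both sides of each operation identity are continuous in all variables.
\item $\Theta=\ker f$ is a clopen congruence of $P$ with $\Theta\cap A^2=\theta$.
\end{itemize}

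I expect the delicate points to be the identification of the uniformity of $P$ with the one generated by clopen congruences, and the density arguments showing that the extended maps are homomorphisms. The latter works because the signature is finite and all operations are continuous.
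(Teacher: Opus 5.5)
Your proof is correct, but it is organized differently from the paper's. Your (i)$\Rightarrow$(ii) is the paper's argument: the projection $\widehat B\to B/\phi$ is the paper's universal-property extension of $A\to A/\theta$. The paper then shows (ii)$\Leftrightarrow$(iii) by comparing bases of the two uniformities. It closes the cycle with a one-line (iii)$\Rightarrow$(i): $P$ is the completion of the dense set $A$ for the induced uniformity, which by (iii) is the profinite uniformity, whose completion is $\widehat A$.

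You instead prove (ii)$\Rightarrow$(i) by hand. You build the extension $\Phi:\widehat A\to P$ of the inclusion and note that continuity and surjectivity hold for \emph{every} dense subalgebra $A$. Condition (ii) is used only for injectivity, via the explicit isomorphisms $A/\theta\cong P/\Theta$. You also get (iii)$\Rightarrow$(ii) directly by extending quotient maps, rather than going around the cycle.

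The paper's route is shorter and presents the criterion as a coincidence of uniformities. But it leaves implicit two points you justify:
\begin{itemize}
\item the unique uniformity of the compact Hausdorff space $P$ has the clopen congruences as a base, which the paper's (ii)$\Leftrightarrow$(iii) step uses without comment;
\item the uniform-completion identification is a homomorphism of residuated lattices.
\end{itemize}
Your version also gives slightly more: every dense subalgebra $A\subseteq P$ yields a canonical continuous surjective homomorphism $\widehat A\to P$, and (ii) says exactly that this map is injective.

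One small wording point in (iii)$\Rightarrow$(ii). Extending $A\to A/\theta$ to $P$ needs completeness of the target $A/\theta$, which is automatic for a finite discrete space, together with density of $A$. Completeness of $P$ is not the relevant hypothesis there.
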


\begin{proof}
Assume first that \(P\cong\widehat A\).  We identify \(A\) with its canonical dense image in \(P\) after quotienting by the intersection of its finite-index congruences if necessary.  Let \(\theta\) be a finite-index congruence of \(A\).  The quotient map
\[
 q:A\to A/\theta
\]
extends uniquely to a continuous homomorphism
\[
 \widehat q:P\to A/\theta
\]
by the universal property of the profinite completion.  Its kernel \(\Theta=\ker(\widehat q)\) is a clopen congruence of \(P\), and
\[
 \theta=\Theta\cap(A\times A).
\]
Thus (i) implies (ii).

The equivalence of (ii) and (iii) is the comparison of two non-Archimedean uniformities on \(A\).  The uniformity induced from \(P\) has as a base the restrictions to \(A\) of clopen congruences of \(P\).  A clopen equivalence relation on a compact Hausdorff space has finite index, because the quotient is compact and discrete; hence these restrictions are finite-index congruences of \(A\).  Conversely, condition (ii) says that every finite-index congruence of \(A\), i.e. every basic entourage of the profinite uniformity of \(A\), is obtained in this way.  Thus the two bases generate the same uniformity.

Finally assume (iii).  Since \(P\) is compact Hausdorff and \(A\) is dense in \(P\), the completion of \(A\) with respect to the uniformity induced from \(P\) is \(P\).  By (iii), this uniformity is the profinite uniformity of \(A\), whose completion is \(\widehat A\).  Hence \(P\cong\widehat A\).
\end{proof}

\begin{remark}
Theorem~\ref{thm:intrinsic-completion-criterion} is not an external classification of all profinite residuated lattices that are completions.  It is a precise internal test: the issue is whether the finite quotients of a dense abstract subalgebra are exactly those seen by clopen congruences of the ambient profinite algebra.  This is the residuated-lattice analogue of the comparison of completion topologies that appears in the literature on MacNeille, canonical, natural, and profinite completions \cite{BezhanishviliGehrkeMinesMorandi2006,BezhanishviliVosmaer2008,Vosmaer2009,DaveyGouveiaHaviarPriestley2011,Nganou2018}.
\end{remark}

\begin{remark}[Relation with syntactic congruence methods]
The intrinsic criterion should also be compared with the syntactic- and principal-congruence methods in topological universal algebra developed by Clark--Davey--Freese--Jackson and with the axiomatizability theory of topological prevarieties of Clark--Davey--Jackson--Pitkethly \cite{ClarkDaveyFreeseJackson2005,ClarkDaveyJacksonPitkethly2008}.  In those works, profiniteness and standardness are controlled by finite determinacy of syntactic and principal congruences.  Here the obstruction is shifted to a completion level: the profinite algebra $P$ is already given, and the question is whether some dense abstract subalgebra $A\subseteq P$ has precisely the finite-index congruences induced by the clopen congruences of $P$.  Thus Theorem~\ref{thm:intrinsic-completion-criterion} can be viewed as a completion-level analogue of finite congruence control, specialized to residuated lattices, rather than as an application of syntactic-congruence theory.
\end{remark}

\begin{theorem}[Positive completion criterion in $n$-potent varieties]
\label{thm:positive-completion-npotent}
Let $\mathcal V$ be a subvariety of residuated lattices contained in an $n$-potent variety.  Let $P$ be a profinite topological residuated lattice whose underlying abstract algebra belongs to $\mathcal V$---equivalently, an inverse limit of finite discrete algebras from $\mathcal V$---and let $A\subseteq P$ be a dense subalgebra.  Suppose that every finite-index congruence of $A$ is finitely generated as a congruence.  Then
\[
 P\cong\widehat A
\]
as topological residuated lattices.
\end{theorem}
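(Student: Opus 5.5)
Our plan is to verify condition (ii) of Theorem~\ref{thm:intrinsic-completion-criterion} for the given dense subalgebra $A\subseteq P$. The equivalence (ii)$\Leftrightarrow$(i) then yields $P\cong\widehat A$. So let $\theta$ be a finite-index congruence of $A$. By Proposition~\ref{prop:filter-cong}, $\theta=\theta_F$ for the filter $F=[1]_\theta$ of $A$. By hypothesis $\theta$ is finitely generated, say by pairs $(a_i,b_i)$, $i=1,\dots,k$. In the filter language, $F$ is then the filter generated by the finitely many elements $d_i=(a_i\to b_i)\wedge(b_i\to a_i)$. Hence $F$ is generated by the single element $d=d_1\odot\cdots\odot d_k$.

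Next we use $n$-potency to make $F$ principal. The filter generated by $d$ is $\{x:\exists m\ d^m\le x\}$, and $n$-potency collapses this to $F={\uparrow_A} d^n$ with $e:=d^n$ idempotent. We then pass to the ambient algebra. Put $\Phi={\uparrow_P} e$, which is a filter of $P$ because $e$ is idempotent. It is closed, since ${\uparrow}e=\{x: e\vee x=x\}$ and $P$ is Hausdorff with $\vee$ continuous. Let $\Theta=\theta_\Phi$. Clearly $\Phi\cap A={\uparrow_A}e=F$. Since the operations of $A$ are restrictions of those of $P$, we get $\Theta\cap(A\times A)=\theta_F=\theta$.

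The main obstacle is showing that $\Theta$ is clopen, since a priori $\Phi$ is only closed. We plan to exploit that $\Phi$ is principal together with the Stone structure. Because $P$ is profinite, Corollary~\ref{cor:intrinsic-filter-limit} presents $P$ as the limit of the finite quotients $P/\theta_G$ over clopen filters $G$. For each clopen filter $G$, the image of $\Phi$ in $P/\theta_G$ is ${\uparrow}[e]$. The closed filter $\Phi$ is the intersection of the clopen filters $G\cdot\Phi$, namely the filters generated by $G\cup\{e\}$, which equal $\{x: \exists g\in G,\ g\odot e\le x\}$. Each of these is clopen, being the preimage of a finite upset in $P/\theta_G$.

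To get $\Phi$ itself clopen we use density and finite index. The congruence $\theta$ has finitely many classes in $A$, and $A$ is dense. So the closures $\overline{[a]_\theta}$ are finitely many closed sets covering $P$. We need them to be pairwise disjoint, with $\overline F=\Phi$; then each closure is clopen and $\Phi$ is clopen. Disjointness should follow from $\Theta$ being a closed congruence that restricts to $\theta$. Indeed, the closure of a $\theta$-class lies in a single $\Theta$-class, because $\Theta$ is closed and the relation $x\,\Theta\,y$ is a continuous condition $e\le(x\to y)\wedge(y\to x)$. Distinct $\theta$-classes lie in distinct $\Theta$-classes, because $\Theta\cap A^2=\theta$. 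So $\Theta$ has at most finitely many classes meeting $\overline{A}=P$. Hence $\Theta$ is a closed equivalence relation of finite index. Its classes are then closed and finitely many, hence clopen, and $\Theta$ is clopen. This gives (ii). The delicate point to check carefully is that every $\Theta$-class meets $A$; we would derive this from openness of the $\Theta$-classes of $\Phi$-translates via the finite quotients above.
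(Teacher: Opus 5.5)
Your proposal is correct and is essentially the paper's proof: the same principal idempotent generator $e=d^n$, the same closed congruence $\Theta=\theta_{\uparrow_P e}$ with $\Theta\cap(A\times A)=\theta$, and the same density argument for finite index. You phrase that last step directly in $P$ as $P=\bigcup_j\overline{[a_j]_\theta}\subseteq\bigcup_j[a_j]_\Theta$, using only that $\Theta$-classes are closed, whereas the paper says the finite dense image of $A$ in the Hausdorff quotient $P/\Theta$ must be all of $P/\Theta$. That covering already shows every $\Theta$-class meets $A$, so the ``delicate point'' you flag at the end is settled by your own argument. The paragraph about the clopen filters generated by $G\cup\{e\}$ is therefore unnecessary.
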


\begin{proof}
By Theorem~\ref{thm:intrinsic-completion-criterion}, it is enough to show that every finite-index congruence of $A$ is induced by a clopen congruence of $P$.  Let $\theta$ be a finite-index congruence of $A$.  By hypothesis, $\theta$ is generated by finitely many pairs $(a_i,b_i)$, $1\le i\le m$.  Under the filter--congruence isomorphism of Proposition~\ref{prop:filter-cong}, joins of congruences correspond to generated joins of filters.  Hence the congruence generated by the pairs $(a_i,b_i)$ has $1$-class generated by the corresponding equivalence elements
\[
 c_i=(a_i\to b_i)\wedge(b_i\to a_i)\qquad(1\le i\le m).
\]
Thus $F_\theta=[1]_\theta$ is the filter generated by $c_1,\ldots,c_m$.
Since $A$ is $n$-potent, this finitely generated filter is principal: putting
\[
 c=(c_1\odot\cdots\odot c_m)^n,
\]
we have
\[
 F_\theta=\uparrow_A c.
\]
Indeed, $c\le c_i$ for each $i$, so $\uparrow_A c$ contains the generators; conversely every filter containing the $c_i$ contains $c_1\odot\cdots\odot c_m$ and hence its $n$th power $c$.

The element $c$ is idempotent: since $P\in\mathcal V$ is $n$-potent,
\[
 c^2=(c_1\odot\cdots\odot c_m)^{2n}=(c_1\odot\cdots\odot c_m)^n=c.
\]
Thus the principal upset
\[
 G=\uparrow_P c
\]
is a filter of $P$.  Let $\Theta=\theta_G$.  The order relation is closed in the compact Hausdorff topological lattice $P$, so $G$ is closed.  Since the map
\[
 (x,y)\longmapsto (x\to y)\wedge(y\to x)
\]
is continuous, $\Theta$ is a closed congruence of $P$.  Moreover
\[
 G\cap A=\uparrow_P c\cap A=\uparrow_A c=F_\theta,
\]
so the filter--congruence correspondence gives
\[
 \Theta\cap(A\times A)=\theta.
\]
It remains only to check that $\Theta$ is clopen.  The quotient $P/\Theta$ is compact Hausdorff, because $\Theta$ is a closed congruence on the compact Hausdorff algebra $P$.  The image of $A$ in $P/\Theta$ is dense and is naturally isomorphic to the finite algebra $A/\theta$.  A finite subset of a Hausdorff space is closed, hence this finite dense image is the whole quotient.  Thus $P/\Theta$ is finite.  A finite Hausdorff space is discrete, so every singleton of $P/\Theta$ is clopen.  Therefore each $\Theta$-class, being the inverse image of a singleton under the quotient map $P\to P/\Theta$, is clopen in $P$.  Consequently
\[
 \Theta=\bigcup_{C\in P/\Theta} C\times C
\]
is a finite union of clopen subsets of $P^2$, hence is clopen.  Therefore $\theta$ is induced by a clopen congruence of $P$, as required.
\end{proof}

\begin{remark}[What EDPC does and does not solve]
Theorem~\ref{thm:positive-completion-npotent} explains the positive role of DPC/EDPC in the completion problem.  In an EDPC residuated-lattice variety, equivalently in an $n$-potent setting, finitely generated filters and congruences are uniformly controlled by powers.  If all finite-index congruences of a dense subalgebra are finitely generated, this finite congruence control is strong enough to make the ambient profinite algebra the full profinite completion.  But EDPC and CEP alone do not force every profinite algebra to be a profinite completion: Heyting algebras are already idempotent and have very good congruence theory, yet the non-completion examples of Bezhanishvili--Bezhanishvili--Moraschini--Stronkowski remain.  The obstruction is not failure of principal-congruence definability; it is the possibility that the finite-index congruence topology of a dense subalgebra is strictly finer than the topology induced by the ambient profinite algebra.
\end{remark}

\begin{lemma}\label{lem:completion-quotient}
Let $A$ be a residuated lattice and put
\[
 \mu_A=\bigcap\{\theta\in\Con(A):A/\theta\text{ is finite}\}.
\]
Then $\widehat A\cong\widehat{A/\mu_A}$, and $A/\mu_A$ embeds densely into $\widehat A$.
\end{lemma}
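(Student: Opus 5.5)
The plan is to compare the two inverse systems of finite quotients directly. First we check that $\mu_A$ is itself a congruence. It is an intersection of congruences, so this is immediate. Write $\pi:A\to A/\mu_A$ for the quotient map. Next we match finite-index congruences of $A$ with those of $A/\mu_A$. By the correspondence theorem, congruences of $A/\mu_A$ are exactly the $\theta/\mu_A$ with $\theta\supseteq\mu_A$, and $(A/\mu_A)/(\theta/\mu_A)\cong A/\theta$. Every finite-index congruence $\theta$ of $A$ contains $\mu_A$ by the definition of $\mu_A$. Hence $\theta\mapsto\theta/\mu_A$ is an order isomorphism from the finite-index congruences of $A$ onto those of $A/\mu_A$, and the corresponding finite quotients are canonically isomorphic. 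These isomorphisms commute with the connecting maps $A/\theta\to A/\theta'$ for $\theta\subseteq\theta'$. So the two inverse systems are isomorphic, and passing to limits gives $\widehat A\cong\widehat{A/\mu_A}$ as topological residuated lattices. Under this isomorphism $\eta_A$ corresponds to $\eta_{A/\mu_A}\circ\pi$.

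For the embedding, we show that $\eta_{A/\mu_A}$ is injective. Its kernel is the intersection of all finite-index congruences of $A/\mu_A$, which is $\bigl(\bigcap\theta\bigr)/\mu_A=\mu_A/\mu_A$, the identity relation. Equivalently, $\ker\eta_A=\mu_A$, and $\eta_A$ factors injectively through $A/\mu_A$.

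For density, we check the image against basic open sets of the inverse limit. These are determined by finitely many coordinates, and since the system is directed under finite intersections, by a single coordinate $A/\theta$. The projection of $\eta_A[A]$ onto each $A/\theta$ is surjective, so $\eta_A[A]$ meets every nonempty basic open set. This is the same argument as in the proof of Lemma~\ref{lem:standard_profinite_criterion}.

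The only point needing care is the directedness step, namely that the intersection of two finite-index congruences again has finite index. This holds because $A/(\theta\cap\theta')$ embeds into $A/\theta\times A/\theta'$. I do not expect a real obstacle beyond bookkeeping the compatibility of the canonical isomorphisms.
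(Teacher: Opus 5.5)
Your proposal is correct and takes the same route as the paper. Every finite-index congruence of $A$ contains $\mu_A$, so the correspondence theorem identifies the two inverse systems of finite quotients; then $A/\mu_A$ is residually finite, so its canonical map is injective with dense image. You just spell out the kernel computation and the single-coordinate density check that the paper compresses into the phrase ``residually finite.''
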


\begin{proof}
Every finite-index congruence of $A$ contains $\mu_A$, and finite-index congruences of $A/\mu_A$ correspond exactly to finite-index congruences of $A$ containing $\mu_A$.  Thus the two inverse systems of finite quotients are canonically isomorphic.  The quotient $A/\mu_A$ is residually finite, hence its canonical map into its profinite completion is injective with dense image.
\end{proof}

\begin{lemma}\label{lem:idempotent-dense}
Let $B$ be a dense subalgebra of a topological residuated lattice $H$.  If $H$ is idempotent, then $B$ is idempotent.  Hence $B$ is a Heyting algebra.
\end{lemma}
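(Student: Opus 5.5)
This is essentially immediate, and no density argument is needed for the first claim. The plan is to observe that idempotency is an identity, $x\odot x=x$. Identities true in $H$ are inherited by every subalgebra. Since $B$ is a subalgebra of $H$, it is closed under $\odot$ with the same operation. So for each $b\in B$ we get $b\odot b=b$ computed in $H$, and this is the same equation in $B$. Hence $B$ is idempotent.

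If one prefers an argument that genuinely uses the topology, one could go the other way. The set $\{x\in H: x\odot x=x\}$ is closed when $H$ is Hausdorff, because it is an equalizer of the continuous maps $x\mapsto x\odot x$ and the identity. That is the relevant fact when idempotency is transferred from a dense subalgebra to its closure. For the direction stated here, however, only subalgebra closure is used, so I would not need Hausdorffness or density at all.

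For the second claim, I would then apply Proposition~\ref{prop:idempotent-heyting} to $B$. $B$ is itself a bounded commutative integral residuated lattice, as a subalgebra in the full signature containing $0$ and $1$. It is idempotent, so by the equivalence of (i) and (iii) there it is a Heyting algebra with $x\odot y=x\wedge y$.

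There is no real obstacle. The only point to state carefully is that "subalgebra" means closure under all of $\wedge,\vee,\odot,\to,0,1$. This guarantees that $B$ is a residuated lattice in its own right, so that Proposition~\ref{prop:idempotent-heyting} applies to it.
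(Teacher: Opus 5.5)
Your proof is correct and is essentially the paper's argument. The product in $B$ is the restriction of the product in $H$, so $b\odot b=b$ for every $b\in B$, and Proposition~\ref{prop:idempotent-heyting} then makes $B$ a Heyting algebra. As you observe, density plays no role here (the paper's proof does not use it either), and your remark about the closed equalizer set concerns only the converse direction, so it is harmless but unnecessary.
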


\begin{proof}
For $b\in B$, the product computed in $B$ is the restriction of the product in $H$.  Since $H$ is idempotent,
\[
 b\odot b=b.
\]
Thus $B$ is idempotent, and Proposition \ref{prop:idempotent-heyting} implies that $B$ is a Heyting algebra.
\end{proof}

\begin{theorem}[Completion transfer for profinite Heyting algebras]\label{thm:completion-transfer}
Let $H$ be a profinite Heyting algebra, regarded as a profinite residuated lattice by setting $x\odot y=x\wedge y$.  Then the following are equivalent.
\begin{enumerate}
\item $H$ is isomorphic to the profinite completion of some residuated lattice.
\item $H$ is isomorphic to the profinite completion of some Heyting algebra.
\end{enumerate}
\end{theorem}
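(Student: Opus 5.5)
The plan is to prove both directions by comparing the finite quotients of a Heyting algebra computed in the two categories, and to use Lemma~\ref{lem:completion-quotient} and Lemma~\ref{lem:idempotent-dense} to reduce an arbitrary residuated lattice to a Heyting algebra.

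For (ii)$\Rightarrow$(i), let $B$ be a Heyting algebra with $H\cong\widehat B$ computed in $\HA$. View $B$ as a residuated lattice via $x\odot y=x\wedge y$. Its congruences in the signature $(\wedge,\vee,\odot,\to,0,1)$ are exactly its Heyting congruences, since $\odot$ is a term in the Heyting signature. So the two congruence lattices coincide, and so do the finite-index ones. The quotients $B/\theta$ are the same finite algebras, because every homomorphic image of an idempotent algebra is idempotent. Hence the inverse systems of finite quotients agree, and the residuated-lattice profinite completion of $B$ is $H$ with $\odot=\wedge$.

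For (i)$\Rightarrow$(ii), suppose $H\cong\widehat A$ for some residuated lattice $A$. By Lemma~\ref{lem:completion-quotient} I may replace $A$ by $A/\mu_A$, so $A$ embeds densely in $\widehat A\cong H$. Let $B$ be the image of $A$; it is a dense residuated sublattice of $H$, and $\widehat B\cong H$ in the residuated-lattice sense. Since $H$ is idempotent, Lemma~\ref{lem:idempotent-dense} shows that $B$ is idempotent and hence a Heyting algebra with $\odot=\wedge$. By the congruence comparison of the previous paragraph, the residuated-lattice profinite completion of $B$ coincides with its Heyting profinite completion. Therefore $H$ is the Heyting profinite completion of $B$.

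Most of this is formal. The point to handle carefully is that the isomorphism $\widehat A\cong H$ transfers along the passage from $A$ to $A/\mu_A$ and to its image. The cleanest route is probably Theorem~\ref{thm:intrinsic-completion-criterion}. In condition (ii) of that theorem, the finite-index congruences of the dense subalgebra $B$ are restrictions of clopen congruences of $H$. That condition refers only to the congruence lattices, which are the same in $\HA$ and $\RL$. The Heyting analogue of the criterion, whose proof is the same word for word, then gives $H\cong\widehat B$ in $\HA$. A second point to check is that the topological residuated-lattice structure on $H$ is the Heyting one with continuous $\wedge$, which holds by hypothesis.
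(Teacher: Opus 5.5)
Your proposal is correct and follows essentially the paper's argument: pass to $A/\mu_A$ via Lemma~\ref{lem:completion-quotient}, use Lemma~\ref{lem:idempotent-dense} to see that the dense image is a Heyting algebra, and note that congruences in the two signatures coincide when $\odot=\wedge$, so the two inverse systems of finite quotients are identical. Your explicit congruence comparison for (ii)$\Rightarrow$(i), which the paper simply calls immediate, and the optional detour through Theorem~\ref{thm:intrinsic-completion-criterion} are harmless refinements of the same route rather than a different one.
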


\begin{proof}
The implication (ii) $\Rightarrow$ (i) is immediate because every Heyting algebra is a residuated lattice.

Conversely, suppose that $H\cong\widehat A$ for some residuated lattice $A$.  By Lemma \ref{lem:completion-quotient}, we may replace $A$ by the residually finite quotient $A/\mu_A$ and assume that $A$ embeds densely into $H$.  Since $H$ is idempotent, Lemma \ref{lem:idempotent-dense} implies that $A$ is idempotent, hence a Heyting algebra.  Congruences of $A$ in the Heyting signature and in the residuated-lattice signature coincide because $\odot=\wedge$.  Therefore the profinite completion of $A$ as a Heyting algebra is the same inverse limit as its profinite completion as a residuated lattice, namely $H$.
\end{proof}

The Heyting side of this question was developed in several stages.  Bezhanishvili and Bezhanishvili characterized profinite Heyting algebras in terms of image-finite posets and complete finitely approximable algebras \cite{BezhanishviliBezhanishvili2008}.  Bezhanishvili and Morandi then studied which profinite Heyting algebras arise as profinite completions \cite{BezhanishviliMorandi2009}.  Bezhanishvili, Bezhanishvili, Moraschini, and Stronkowski proved that there are profinite Heyting algebras that are not profinite completions of any Heyting algebra and characterized the Heyting varieties in which this obstruction disappears \cite{BezhanishviliEtAl2021}.  In what follows we use their result in a concrete form: their construction yields an image-finite poset $X$ for which $\Up(X)$ is a profinite Heyting algebra that is not the profinite completion of any Heyting algebra.  Theorem \ref{thm:completion-transfer} strengthens this Heyting obstruction to the category of residuated lattices.

\begin{corollary}[Non-completion profinite residuated lattices]\label{cor:noncompletion-rl}
There exist profinite residuated lattices that are not isomorphic to the profinite completion of any residuated lattice.
\end{corollary}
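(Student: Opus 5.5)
The plan is to combine the Heyting non-completion theorem of Bezhanishvili--Bezhanishvili--Moraschini--Stronkowski \cite{BezhanishviliEtAl2021} with Theorem~\ref{thm:completion-transfer}. As recalled just before the statement, their construction produces an image-finite poset $X$ such that $H=\Up(X)$ is a profinite Heyting algebra which is not isomorphic to the profinite completion of any Heyting algebra. We regard $H$ as a residuated lattice by putting $x\odot y=x\wedge y$, and $H$ is then the candidate witness.

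First, $H$ must be a profinite residuated lattice and not only a profinite Heyting algebra. Write $H$ as an inverse limit of finite discrete Heyting algebras $H_i$ with continuous Heyting homomorphisms. Each $H_i$ is a finite residuated lattice under $\odot=\wedge$. A Heyting homomorphism preserves $\wedge,\vee,\to,0,1$, so it preserves $\odot$ as well and is therefore a residuated-lattice homomorphism. The limit of the $H_i$ computed in residuated lattices has the same carrier, the same product topology and the same operations. Hence $H$ is a profinite topological residuated lattice in the sense of Section~2.

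Second, suppose $H\cong\widehat A$ for some residuated lattice $A$. By Theorem~\ref{thm:completion-transfer}, (i)$\Rightarrow$(ii), $H$ would then be the profinite completion of some Heyting algebra. This contradicts the choice of $X$, so $H$ is not the profinite completion of any residuated lattice.

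There is no genuine obstacle, since the substantive work sits in Theorem~\ref{thm:completion-transfer} and in the cited Heyting construction. The one point to check is that both results use the same notion of isomorphism. Theorem~\ref{thm:completion-transfer} gives an isomorphism of topological residuated lattices, and we need it to be an isomorphism of topological Heyting algebras. This holds because the signatures differ only by $\odot$, which equals $\wedge$, and because congruences in the two signatures coincide, so the two inverse systems are literally the same.
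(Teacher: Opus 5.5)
Your proposal is correct and is essentially the paper's proof: take the Bezhanishvili--Bezhanishvili--Moraschini--Stronkowski algebra $H=\Up(X)$, view it as a residuated lattice with $\odot=\wedge$, and apply (i)$\Rightarrow$(ii) of Theorem~\ref{thm:completion-transfer} to reach a contradiction. Your explicit check that the inverse-limit presentation of $H$ is also one in the residuated-lattice signature is a detail the paper leaves implicit, and you handle it correctly.
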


\begin{proof}
By the proof of \cite[Theorem~5.1]{BezhanishviliEtAl2021}---equivalently, by \cite[Theorem~6.1]{BezhanishviliEtAl2021} applied to the full variety $\HA$, which is not contained in $\DHA$---there exists an image-finite poset $X$ such that
\[
 H:=\Up(X)
\]
is a profinite Heyting algebra not isomorphic to the profinite completion of any Heyting algebra.  Regard this concrete $H$ as a residuated lattice.  If $H$ were the profinite completion of a residuated lattice, Theorem \ref{thm:completion-transfer} would imply that it is the profinite completion of a Heyting algebra, a contradiction.
\end{proof}

The previous corollary already shows that enlarging the category from Heyting algebras to residuated lattices does not remove the Heyting non-completion obstruction.  In that corollary the Heyting algebra is the specific algebra \(H=\Up(X)\) supplied by the cited Heyting construction.  We now strengthen the argument by identifying a general finite-tag obstruction mechanism.  The point is that some finite non-idempotent residuated lattices are invisible to finite Heyting quotients; multiplying by such a finite tag creates a genuinely non-idempotent profinite object without creating new finite Heyting quotients.

\subsection{Finite tags and completion reflection}

\begin{definition}[Heyting-invisible finite tag]
\label{def:heyting-invisible-tag}
A finite residuated lattice \(T\) is called a \emph{Heyting-invisible finite tag} if there is no bounded residuated-lattice homomorphism
\[
 T\to F
\]
into any non-trivial finite subdirectly irreducible Heyting algebra \(F\).
It is enough to test subdirectly irreducible finite Heyting targets for the purposes of the factorization theorem below, because every finite Heyting algebra has a finite subdirect decomposition into finite subdirectly irreducible Heyting algebras.
\end{definition}

\begin{theorem}[Finite tag factorization]
\label{thm:tag-factorization}
Let \(T\) be a Heyting-invisible finite tag, let \(H\) be a Heyting algebra, and let \(F\) be a finite Heyting algebra.  Then every bounded residuated-lattice homomorphism
\[
 f:H\times T\to F
\]
factors uniquely through the first projection
\[
 \pi_1:H\times T\to H.
\]
Equivalently, there is a unique Heyting homomorphism \(\bar f:H\to F\) such that
\[
 f=\bar f\circ\pi_1.
\]
\end{theorem}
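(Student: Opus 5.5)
The plan is to reduce to subdirectly irreducible targets, use the complementary idempotents \((1,0)\) and \((0,1)\) of \(H\times T\) to decide which factor each such homomorphism sees, rule out the \(T\)-factor by Heyting-invisibility, and then reassemble.

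\emph{Reduction.} If \(F\) is trivial, \(f\) factors through \(\pi_1\) via the constant map, so assume \(F\) is non-trivial. As noted after Definition~\ref{def:heyting-invisible-tag}, \(F\) has a finite subdirect decomposition. So there are surjections \(p_j:F\to F_j\), \(1\le j\le k\), onto finite non-trivial subdirectly irreducible Heyting algebras, with \(\bigcap_j\ker p_j\) the identity relation. Put \(f_j=p_j\circ f\). It suffices to show \(\ker\pi_1\subseteq\ker f_j\) for every \(j\). Then \(\ker\pi_1\subseteq\bigcap_j\ker f_j=\ker f\), so \(f\) factors through \(\pi_1\).

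\emph{The key dichotomy for one \(F_j\).} Put \(e=(1,0)\) and \(e'=(0,1)\) in \(H\times T\). Then \(e\vee e'=1\) and \(e\odot e'=0\). Since \(\odot\) is meet in \(F_j\), the elements \(f_j(e)\) and \(f_j(e')\) satisfy
\[
 f_j(e)\vee f_j(e')=1,\qquad f_j(e)\wedge f_j(e')=0 .
\]
A finite subdirectly irreducible Heyting algebra has a unique coatom, so \(1\) is join-irreducible. Hence one of \(f_j(e),f_j(e')\) equals \(1\), and then the other equals \(0\).

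\emph{Excluding the tag.} Suppose \(f_j(e')=1\). Then the \(1\)-class of \(\ker f_j\) is a filter containing \(e'\), hence containing \(\uparrow e'=H\times\{1\}\). The congruence of the filter \(H\times\{1\}\) is
\[
 \{((h,t),(h',t')):(t\to t')\wedge(t'\to t)=1\}=\ker\pi_2 .
\]
By Proposition~\ref{prop:filter-cong}, \(\ker f_j\supseteq\ker\pi_2\), so \(f_j=g\circ\pi_2\) for a homomorphism \(g:T\to F_j\). The map \(g\) is a bounded residuated-lattice homomorphism into a non-trivial finite subdirectly irreducible Heyting algebra, contradicting Heyting-invisibility of \(T\). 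Hence \(f_j(e)=1\).

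The same filter argument applied to \(e=(1,0)\) shows that \([1]_{\ker f_j}\supseteq\uparrow e=\{1\}\times T\). The congruence of this filter is \(\ker\pi_1\), so \(\ker\pi_1\subseteq\ker f_j\), as required.

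\emph{Conclusion.} We obtain \(\bar f:H\to F\) with \(f=\bar f\circ\pi_1\). Because \(\pi_1\) is a surjective homomorphism, \(\bar f\) is unique and is a bounded residuated-lattice homomorphism. Since \(\odot=\wedge\) in both \(H\) and \(F\), it is a Heyting homomorphism.

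The main obstacle I expect is the step excluding the tag: one must see that \(f_j(e')=1\) forces factorization through \(T\) via the filter \(\uparrow(0,1)\), and not merely that \(f_j\) is non-trivial on \(\{0\}\times T\). Note that \(\{0\}\times T\) is not a subalgebra of \(H\times T\), which is why the argument must go through filters and Proposition~\ref{prop:filter-cong} rather than restriction.
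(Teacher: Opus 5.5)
Your proof is correct and follows the paper's route: reduce to finite subdirectly irreducible targets, show that the complementary pair $(1_H,0_T),(0_H,1_T)$ is sent to $\{0,1\}$, exclude the tag case by Heyting-invisibility, and reassemble via the subdirect embedding. The differences are only local. You get the dichotomy from join-irreducibility of $1$ (unique coatom) rather than from the absence of non-trivial complemented elements. You also obtain both factorizations from the principal filters $\uparrow(0,1)=H\times\{1\}$ and $\uparrow(1,0)=\{1\}\times T$ via Proposition~\ref{prop:filter-cong}, which neatly replaces the paper's elementwise computation and its separate check that $t\mapsto f(0,t)$ preserves residuals.
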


\begin{proof}
We first prove the assertion when \(F\) is finite, non-trivial, and subdirectly irreducible.  Put
\[
 e=f(0,1_T),\qquad d=f(1_H,0_T).
\]
The elements \((0,1_T)\) and \((1_H,0_T)\) are complementary in the bounded lattice reduct of \(H\times T\).  Hence \(e\) and \(d\) are complementary elements of the Heyting algebra \(F\).  We use the standard fact that a subdirectly irreducible Heyting algebra has no non-trivial complemented elements.  Indeed, if \(0<e<1\) were complemented by \(d\), then the usual lattice decomposition map
\[
 x\longmapsto (x\wedge e,x\wedge d)
\]
would yield a non-trivial direct product decomposition of \(F\).  A direct product of two non-trivial algebras is not subdirectly irreducible, because its two coordinate kernels are non-trivial congruences with trivial intersection.  This contradicts subdirect irreducibility.  Thus \(e\in\{0,1\}\).

Assume first that \(e=0\).  Then \(d=1\).  If \(t\in T\), then \((0,t)\le (0,1_T)\), and therefore
\[
 f(0,t)\le f(0,1_T)=0.
\]
Thus \(f(0,t)=0\).  For all \(h\in H\) and \(t\in T\),
\[
 f(h,t)=f((h,0_T)\vee(0,t))=f(h,0_T)\vee f(0,t)=f(h,0_T).
\]
Moreover,
\[
 (h,0_T)=(h,1_T)\wedge(1_H,0_T),
\]
and since \(f(1_H,0_T)=1\), we obtain
\[
 f(h,0_T)=f(h,1_T)\wedge1=f(h,1_T).
\]
Therefore \(f(h,t)=f(h,1_T)\), so \(f\) factors through \(\pi_1\).  The induced map
\[
 \bar f:H\to F,\qquad \bar f(h)=f(h,1_T),
\]
is a Heyting homomorphism: the subalgebra \(H\times\{1_T\}\) is closed under \(\wedge,\vee,\odot,\to,0,1\), and in the case under consideration \(\bar f(0)=f(0,1_T)=e=0\).

It remains to rule out \(e=1\).  Then \(d=0\).  Since \((h,0_T)\le(1_H,0_T)\), we have \(f(h,0_T)=0\) for all \(h\in H\).  Hence
\[
 f(h,t)=f((h,0_T)\vee(0,t))=f(0,t).
\]
Define \(k:T\to F\) by
\[
 k(t)=f(0,t).
\]
The map \(k\) preserves \(0,1,\wedge,\vee\), and \(\odot\).  It also preserves residuals: for \(s,t\in T\),
\[
 (0,s)\to(0,t)=(1_H,s\to t),
\]
and the equality \(f(h,u)=f(0,u)\) gives
\[
 f(1_H,s\to t)=f(0,s\to t).
\]
Thus \(k\) is a bounded residuated-lattice homomorphism \(T\to F\), contradicting the assumption that \(T\) is Heyting-invisible.  Hence \(e=1\) is impossible.

If \(F\) is the one-element algebra, the assertion is immediate.  For an arbitrary non-trivial finite Heyting algebra \(F\), take a finite subdirect embedding
\[
 F\hookrightarrow \prod_{i=1}^r F_i
\]
with each \(F_i\) finite subdirectly irreducible.  For each coordinate projection \(p_i:F\to F_i\), the composite \(p_i\circ f\) factors through \(\pi_1\) by the preceding argument.  Since the maps \(p_i\) separate points, \(f\) itself is constant on the fibers of \(\pi_1\).  Hence there is a unique map \(\bar f:H\to F\) with \(f=\bar f\circ\pi_1\), and \(\bar f\) is a Heyting homomorphism because \(f\) is.  Uniqueness follows from the surjectivity of \(\pi_1\).
\end{proof}

\begin{theorem}[Completion reflection through finite tags]
\label{thm:tag-completion-reflection}
Let \(T\) be a Heyting-invisible finite tag and let \(H\) be a profinite Heyting algebra.  If \(H\times T\) is isomorphic, as a topological residuated lattice, to the profinite completion of some residuated lattice, then \(H\) is isomorphic to the profinite completion of a Heyting algebra.
\end{theorem}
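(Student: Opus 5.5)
The plan is to produce a dense Heyting subalgebra of \(H\) and verify the intrinsic criterion of Theorem~\ref{thm:intrinsic-completion-criterion} for it, using Theorem~\ref{thm:tag-factorization} to transport finite quotients. Suppose \(H\times T\cong\widehat A\). By Lemma~\ref{lem:completion-quotient} we may replace \(A\) by \(A/\mu_A\). Hence we assume that \(A\) is a residually finite residuated lattice embedded densely in \(P:=H\times T\), with \(P\) its profinite completion. Put
\[
 B=\pi_1[A]\subseteq H .
\]
Since \(\pi_1:P\to H\) is a continuous surjective homomorphism, \(B\) is a subalgebra of \(H\), and it is dense because \(\pi_1\) maps the dense set \(A\) onto a dense set. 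By Lemma~\ref{lem:idempotent-dense}, or simply because \(B\) is a subalgebra of the idempotent algebra \(H\), \(B\) is a Heyting algebra.

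Next we check condition (ii) of Theorem~\ref{thm:intrinsic-completion-criterion} for \(B\subseteq H\). Let \(\theta\) be a finite-index congruence of \(B\), and let \(q:B\to F=B/\theta\) be the quotient map. Here \(F\) is a finite Heyting algebra, being a quotient of a Heyting algebra. The composite \(q\circ\pi_1|_A:A\to F\) is a homomorphism onto a finite algebra. By the universal property of \(\widehat A\cong P\), used exactly as in the proof of (i)\(\Rightarrow\)(ii) in Theorem~\ref{thm:intrinsic-completion-criterion}, it extends to a continuous homomorphism \(g:P\to F\) with \(g|_A=q\circ\pi_1|_A\).

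By Theorem~\ref{thm:tag-factorization}, \(g=\bar g\circ\pi_1\) for a unique Heyting homomorphism \(\bar g:H\to F\). Continuity of \(\bar g\) follows because \(\pi_1\) is an open continuous surjection; indeed a product projection is open. Thus for \(U\subseteq F\) we have \(\bar g^{-1}(U)=\pi_1[g^{-1}(U)]\), which is open. Hence \(\Theta=\ker\bar g\) is a clopen congruence of \(H\), since its classes are preimages of points of the finite discrete space \(F\). For \(a\in A\) we have \(\bar g(\pi_1 a)=g(a)=q(\pi_1 a)\), so \(\bar g|_B=q\) and therefore \(\Theta\cap(B\times B)=\ker q=\theta\). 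Theorem~\ref{thm:intrinsic-completion-criterion} then gives \(H\cong\widehat B\) as topological residuated lattices.

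Finally, since \(B\) is Heyting, its congruences in the Heyting and residuated-lattice signatures coincide, as in Theorem~\ref{thm:completion-transfer}. So \(\widehat B\) is also its Heyting profinite completion, and \(H\) is the profinite completion of a Heyting algebra. The main obstacle is the middle step: ensuring that the extension \(g\) lands in a finite Heyting target, so that the tag factorization applies, and that the factor \(\bar g\) is continuous and agrees with \(q\) on \(B\). The first point holds because \(F\) is chosen as a quotient of the Heyting algebra \(B\). The second rests on the openness of \(\pi_1\) together with the identity \(g|_A=q\circ\pi_1|_A\).
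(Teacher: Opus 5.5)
Your proof is correct and follows essentially the same route as the paper. You project the dense image of $A$ to get the dense Heyting subalgebra $B=\pi_1[A]\subseteq H$, extend each finite quotient of $B$ via the universal property of $\widehat A\cong H\times T$, factor that extension through $\pi_1$ by Theorem~\ref{thm:tag-factorization}, and conclude with the intrinsic criterion. Your openness argument for the continuity of $\bar g$, and your closing remark that Heyting and residuated-lattice congruences of $B$ coincide, make explicit two points the paper merely asserts; the paper instead appeals to a ``Heyting version'' of Theorem~\ref{thm:intrinsic-completion-criterion}.
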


\begin{proof}
Assume that \(\widehat A\cong H\times T\), and let
\[
 \eta:A\to H\times T
\]
be the canonical dense homomorphism.  Put
\[
 B=\pi_1(\eta(A))\subseteq H.
\]
The set \(B\) is a Heyting subalgebra of \(H\), since \(\pi_1\circ\eta\) is a residuated-lattice homomorphism into the idempotent residuated lattice \(H\).  It is dense because \(\eta(A)\) is dense in \(H\times T\) and \(\pi_1\) is continuous and onto.

Let \(g:B\to F\) be a homomorphism into a finite Heyting algebra \(F\).  The map
\[
 A\to F,\qquad a\mapsto g(\pi_1(\eta(a)))
\]
extends uniquely to a continuous residuated-lattice homomorphism
\[
 \widetilde g:H\times T\to F
\]
by the universal property of \(\widehat A\).  By Theorem~\ref{thm:tag-factorization},
\[
 \widetilde g=\bar g\circ\pi_1
\]
for a unique continuous Heyting homomorphism \(\bar g:H\to F\).  This extension agrees with \(g\) on \(B\): if \(b\in B\), choose \(a\in A\) with \(b=\pi_1(\eta(a))\); then
\[
 \bar g(b)=\bar g(\pi_1(\eta(a)))=\widetilde g(\eta(a))=g(\pi_1(\eta(a)))=g(b).
\]
Thus every homomorphism from \(B\) to a finite Heyting algebra extends uniquely to a continuous homomorphism from \(H\).  Equivalently, for every finite-index Heyting congruence \(\theta\) of \(B\), the quotient map \(B\to B/\theta\) extends to a continuous homomorphism \(H\to B/\theta\); hence \(\theta\) is the restriction to \(B\) of the clopen kernel of this extension.  Since \(B\) is dense in the profinite Heyting algebra \(H\), the Heyting version of the intrinsic criterion of Theorem~\ref{thm:intrinsic-completion-criterion} applies.  Therefore \(H\cong\widehat B\) as a profinite Heyting algebra.  Equivalently, the finite Heyting quotients of \(B\) are exactly the finite continuous quotients seen by \(H\).
\end{proof}

\subsection{The \L{}ukasiewicz tag and genuinely non-idempotent examples}

Let
\[
 \mathbf L_3=\{0<\alpha<1\}
\]
be the three-element \L{}ukasiewicz MV-chain.  Thus \(\alpha^2=0\) and \(\alpha\to0=\alpha\).  We regard Heyting algebras as residuated lattices by putting \(x\odot y=x\wedge y\).

\begin{lemma}[The \L{}ukasiewicz tag]
\label{lem:L3-is-tag}
The finite residuated lattice \(\mathbf L_3\) is a Heyting-invisible finite tag.
\end{lemma}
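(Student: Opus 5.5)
The plan is to argue by contradiction from a single homomorphism. Suppose $k:\mathbf L_3\to F$ is a bounded residuated-lattice homomorphism into a non-trivial finite subdirectly irreducible Heyting algebra $F$, where $F$ is regarded as a residuated lattice with $x\odot y=x\wedge y$. We exploit the clash between nilpotency in $\mathbf L_3$ and idempotency in $F$.

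First compute $k(\alpha)$. Since $\alpha^2=0$ in $\mathbf L_3$, we get
\[
 k(\alpha)=k(\alpha)\wedge k(\alpha)=k(\alpha)\odot k(\alpha)=k(\alpha^2)=k(0)=0 .
\]
Next use the residual. In $\mathbf L_3$ we have $\alpha\to 0=\alpha$, so
\[
 k(\alpha)=k(\alpha\to 0)=k(\alpha)\to k(0)=0\to 0=1
\]
in $F$. Hence $0=k(\alpha)=1$ in $F$, so $F$ is trivial. This contradicts non-triviality, and no such $k$ exists.

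Alternatively, one can use the lattice join: $\alpha\vee(\alpha\to 0)=\alpha$ would give $k(\alpha)=k(\alpha)\vee\neg k(\alpha)=0\vee 1=1$. The residual argument above is shorter and suffices.

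There is no real obstacle here. The only point to check is the convention that homomorphisms preserve $\to$, $0$ and $1$, which is part of ``bounded residuated-lattice homomorphism'' in Definition~\ref{def:heyting-invisible-tag}. The argument does not even use subdirect irreducibility or finiteness of $F$: it shows that no bounded homomorphism from $\mathbf L_3$ into any non-trivial Heyting algebra exists.
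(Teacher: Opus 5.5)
Your proof is correct and is essentially the paper's argument: idempotency in $F$ forces $k(\alpha)=0$, and then preservation of $\alpha\to 0=\alpha$ gives $0=1$ in $F$. As you observe, the paper's proof likewise works for any non-trivial Heyting target, without using finiteness or subdirect irreducibility.
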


\begin{proof}
Let \(k:\mathbf L_3\to F\) be a bounded residuated-lattice homomorphism into a non-trivial Heyting algebra \(F\).  Since \(F\) is Heyting, multiplication in \(F\) is idempotent.  Hence
\[
 k(\alpha)=k(\alpha)^2=k(\alpha^2)=k(0)=0.
\]
But \(\alpha\to0=\alpha\) in \(\mathbf L_3\).  Preservation of implication gives
\[
 0=k(\alpha)=k(\alpha\to0)=k(\alpha)\to k(0)=0\to0=1,
\]
a contradiction.  Thus no such homomorphism exists.
\end{proof}

In Lemma~\ref{lem:L3-factorization}, the letter \(H\) denotes an arbitrary Heyting algebra.  The special choice \(H=\Up(X)\) is not part of the lemma; it is used only when we derive the concrete non-idempotent example in Corollary~\ref{cor:genuine-nonidempotent-noncompletion}.

Figure~\ref{fig:L3-mechanism} summarizes the mechanism.  The top layer starts from the concrete image-finite poset $X$ obtained from \cite[Theorem~5.1]{BezhanishviliEtAl2021}; its upset algebra $H=\Up(X)$ is profinite but not a Heyting profinite completion.  Passing from $H$ to $P=H\times\mathbf L_3$ adds a genuinely non-idempotent factor.  The vertical arrow from $P$ back to $H$ is the first projection $\pi_1$, and the dashed arrow indicates the content of Lemma~\ref{lem:L3-factorization}: every bounded homomorphism from $P$ to a finite Heyting algebra factors uniquely through $\pi_1$.  This corrects the misleading picture suggested by the non-homomorphic map $h\mapsto(h,1)$, whose direction is opposite to that of $\pi_1$ and which does not preserve the bottom element.

\begin{center}
\phantomsection
\refstepcounter{figure}\label{fig:L3-mechanism}
\resizebox{0.86\textwidth}{!}{%
\begin{tikzpicture}[
  >=Latex,
  font=\footnotesize,
  main/.style={draw, rounded corners=6pt, thick, inner sep=7pt, align=center},
  arr/.style={->, thick},
  darr/.style={->, thick, dashed}
]
\node[main, draw=red!70!black, fill=red!3, text width=4.35cm] (X) at (0,4.45)
  {\textbf{image-finite poset $X$}\\ not Esakia representable};
\node[main, draw=blue!70!black, fill=blue!3, text width=4.85cm] (H) at (0,2.20)
  {\textbf{$H=\Up(X)$}\\ profinite Heyting algebra\\ not an HA-completion};
\node[main, draw=green!50!black, fill=green!4, text width=4.85cm] (P) at (0,-0.55)
  {\textbf{$P=H\times\mathbf L_3$}\\ non-idempotent profinite RL\\ not an RL-completion};
\node[main, draw=orange!85!black, fill=orange!5, text width=3.55cm] (F) at (5.55,2.20)
  {\textbf{finite Heyting target $F$}\\ idempotent};
\node[main, draw=purple!75!black, fill=purple!4, text width=3.55cm] (R) at (5.55,-0.55)
  {\textbf{reflection theorem}\\ $P$ RL-completion\\ $\Rightarrow H$ HA-completion};
\node[align=center, text=orange!85!black, text width=3.6cm] at (5.55,0.78)
  {Lemma~\ref{lem:L3-factorization}:\\ $f=\bar f\circ\pi_1$};

\draw[arr, blue!70!black] (X) -- node[right] {$\Up$} (H);
\draw[arr, green!50!black] (H) -- node[right] {$\times\mathbf L_3$} (P);
\draw[arr, purple!70!black] (P.west) to[bend left=42] node[left] {$\pi_1$} (H.west);
\draw[darr, orange!85!black] (H.east) -- node[above] {$\bar f$} (F.west);
\draw[darr, orange!85!black] (P.east) -- node[below] {$f$} (F.south west);
\end{tikzpicture}
}%
\par\smallskip
\begin{minipage}{0.88\textwidth}
\small\textbf{Figure~\thefigure.} Schematic illustration of the non-idempotent non-completion mechanism.  The concrete input is an image-finite poset $X$ such that $H=\Up(X)$ is profinite but not a Heyting profinite completion.  Passing to $P=H\times\mathbf L_3$ adds a non-idempotent MV-factor.  The arrow $H\to P$ denotes formation of the product with $\mathbf L_3$, whereas $\pi_1:P\to H$ is the first projection.  Lemma~\ref{lem:L3-factorization} says that every bounded homomorphism from $P$ to a finite Heyting algebra factors through $\pi_1$, and Theorem~\ref{thm:L3-completion-reflection} uses this to reflect non-completion from $P$ back to $H$.
\end{minipage}
\end{center}

\begin{lemma}[Factorization through the Heyting coordinate]
\label{lem:L3-factorization}
Let \(H\) be a Heyting algebra, and let \(F\) be a finite Heyting algebra, regarded as an idempotent residuated lattice.  Then every bounded residuated-lattice homomorphism
\[
 f:H\times\mathbf L_3\longrightarrow F
\]
factors through the first projection
\[
 \pi_1:H\times\mathbf L_3\to H.
\]
Equivalently, there is a unique Heyting homomorphism \(\bar f:H\to F\) such that
\[
 f=\bar f\circ\pi_1.
\]
\end{lemma}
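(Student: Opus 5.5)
The plan is to obtain the lemma as the special case $T=\mathbf L_3$ of Theorem~\ref{thm:tag-factorization}. By Lemma~\ref{lem:L3-is-tag}, $\mathbf L_3$ is a Heyting-invisible finite tag. That lemma in fact shows more than the definition requires: there is no bounded residuated-lattice homomorphism from $\mathbf L_3$ into any non-trivial Heyting algebra, subdirectly irreducible or not. So the hypotheses of Theorem~\ref{thm:tag-factorization} hold with $T=\mathbf L_3$, and the factorization $f=\bar f\circ\pi_1$, with $\bar f$ a unique Heyting homomorphism, follows at once.

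For a self-contained argument, I would repeat the proof of Theorem~\ref{thm:tag-factorization} directly in this case. First reduce to $F$ finite, non-trivial and subdirectly irreducible, using a finite subdirect decomposition of $F$; the trivial algebra is immediate. Next set $e=f(0,1)$ and $d=f(1_H,0)$. These are complementary in $F$, and a subdirectly irreducible Heyting algebra has no non-trivial complemented elements, so $e\in\{0,1\}$.

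Now split into the two cases.
\begin{enumerate}
\item If $e=1$, then $f(h,t)=f(0,t)$ for all $(h,t)$. The map $t\mapsto f(0,t)$ is then a bounded residuated-lattice homomorphism $\mathbf L_3\to F$. The only non-obvious point is preservation of $\to$, which uses $(0,s)\to(0,t)=(1_H,s\to t)$. This contradicts Lemma~\ref{lem:L3-is-tag}. Concretely, $k(\alpha)=k(\alpha)^2=k(0)=0$, while $k(\alpha)=k(\alpha\to0)=0\to0=1$.
\item If $e=0$, then $f(0,t)=0$ and $f(1_H,0)=1$. From this, $f(h,t)=f(h,0)=f(h,1)$, so $f$ is constant on the fibres of $\pi_1$. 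The map $\bar f(h)=f(h,1)$ is a Heyting homomorphism because $H\times\{1\}$ is closed under all the operations and $\bar f(0)=e=0$.
\end{enumerate}

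Finally, glue the subdirectly irreducible cases along the separating projections $p_i:F\to F_i$. Uniqueness of $\bar f$ comes from the surjectivity of $\pi_1$.

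The main obstacle is the case $e=1$, where one must verify that the would-be map $\mathbf L_3\to F$ really preserves implication. This is exactly where the non-idempotent relation $\alpha\to0=\alpha$ with $\alpha^2=0$ is used, and it is already settled by Theorem~\ref{thm:tag-factorization} together with Lemma~\ref{lem:L3-is-tag}.
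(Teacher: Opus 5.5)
Your proposal is correct and is exactly the paper's argument: the paper proves the lemma in one line as the case \(T=\mathbf L_3\) of Theorem~\ref{thm:tag-factorization}, with Heyting-invisibility supplied by Lemma~\ref{lem:L3-is-tag}. Your self-contained expansion faithfully reproduces that theorem's proof: the complementary pair \(e,d\), the cases \(e=0\) and \(e=1\) with the residual check \((0,s)\to(0,t)=(1_H,s\to t)\), and the gluing over a finite subdirect decomposition.
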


\begin{proof}
This is Theorem~\ref{thm:tag-factorization} applied to the Heyting-invisible finite tag \(\mathbf L_3\), using Lemma~\ref{lem:L3-is-tag}.
\end{proof}

\begin{theorem}[Completion reflection through \(\mathbf L_3\)]
\label{thm:L3-completion-reflection}
Let \(H\) be a profinite Heyting algebra.  If \(H\times\mathbf L_3\) is isomorphic, as a topological residuated lattice, to the profinite completion of some residuated lattice, then \(H\) is isomorphic to the profinite completion of a Heyting algebra.
\end{theorem}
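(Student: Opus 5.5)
The statement is the special case $T=\mathbf L_3$ of Theorem~\ref{thm:tag-completion-reflection}. Lemma~\ref{lem:L3-is-tag} shows that $\mathbf L_3$ is a Heyting-invisible finite tag, so the hypothesis of that theorem holds. The conclusion follows at once. For a self-contained account, I would retrace that argument with Lemma~\ref{lem:L3-factorization} used in place of Theorem~\ref{thm:tag-factorization}.

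\textbf{Step 1: the dense Heyting subalgebra.} Suppose $\widehat A\cong H\times\mathbf L_3$. By Lemma~\ref{lem:completion-quotient} we may assume the canonical map $\eta:A\to H\times\mathbf L_3$ is a dense embedding. Put $B=\pi_1(\eta(A))$. This is a residuated sublattice of the idempotent algebra $H$, so it is a Heyting subalgebra. It is dense in $H$, because $\pi_1$ is a continuous surjection and continuous maps send dense sets onto dense subsets of their image.

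\textbf{Step 2: extending finite Heyting quotients of $B$.} Let $g:B\to F$ be a homomorphism onto a finite Heyting algebra. The composite $g\circ\pi_1\circ\eta:A\to F$ is a residuated-lattice homomorphism into a finite algebra. By the universal property of $\widehat A$, it extends to a continuous homomorphism $\widetilde g:H\times\mathbf L_3\to F$. Lemma~\ref{lem:L3-factorization} then gives $\widetilde g=\bar g\circ\pi_1$ for a Heyting homomorphism $\bar g:H\to F$.

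Two checks remain for $\bar g$. It is continuous, because $\pi_1$ is a quotient map of compact Hausdorff spaces and $\widetilde g$ is continuous. It extends $g$, because $\bar g(\pi_1\eta(a))=\widetilde g(\eta(a))=g(\pi_1\eta(a))$ for every $a\in A$.

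The kernel of $\bar g$ is therefore a clopen congruence of $H$, since $F$ is finite and discrete. Its restriction to $B$ is $\ker g$.

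\textbf{Step 3: the intrinsic criterion.} By Step 2, every finite-index congruence of $B$ is the restriction of a clopen congruence of $H$. The intrinsic criterion, Theorem~\ref{thm:intrinsic-completion-criterion}, now gives $H\cong\widehat B$. That theorem is stated for residuated lattices, but its proof transfers verbatim to Heyting algebras. Alternatively, Heyting and residuated-lattice congruences of $B$ coincide because $\odot=\wedge$, so the residuated-lattice completion of $B$ equals its Heyting completion, as in Theorem~\ref{thm:completion-transfer}. Either way, $H$ is the profinite completion of the Heyting algebra $B$.

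\textbf{Main obstacle.} The only delicate point is the factorization step, specifically ruling out the case in which the map sends $(0,1)$ to $1$. That case would induce a homomorphism $\mathbf L_3\to F$, which Lemma~\ref{lem:L3-is-tag} forbids. This work is already done in Theorem~\ref{thm:tag-factorization} and Lemma~\ref{lem:L3-factorization}, so what remains is only the bookkeeping of continuity and density described above.
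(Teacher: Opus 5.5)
Your proposal is correct and follows the paper's approach. The paper proves this theorem in one line as the case $T=\mathbf L_3$ of Theorem~\ref{thm:tag-completion-reflection}, using Lemma~\ref{lem:L3-is-tag}, and your retraced Steps 1--3 match the paper's proof of that general theorem; you additionally justify the continuity of $\bar g$ via $\pi_1$ being a quotient map, which the paper only asserts.
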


\begin{proof}
This is Theorem~\ref{thm:tag-completion-reflection} applied to \(T=\mathbf L_3\).
\end{proof}

\begin{corollary}[A family of non-idempotent non-completion examples]
\label{cor:tag-noncompletion-family}
Let \(H=\Up(X)\) be a profinite Heyting algebra obtained from the construction of Bezhanishvili--Bezhanishvili--Moraschini--Stronkowski, so that \(H\) is not isomorphic to the profinite completion of any Heyting algebra.  If \(T\) is a non-idempotent Heyting-invisible finite tag, then
\[
 H\times T
\]
is a genuinely non-idempotent profinite residuated lattice which is not isomorphic to the profinite completion of any residuated lattice.
\end{corollary}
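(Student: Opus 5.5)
The plan has three parts: check that $H\times T$ is a profinite residuated lattice, check that it is genuinely non-idempotent, and then get non-completion by contraposition from Theorem~\ref{thm:tag-completion-reflection}. Throughout, $T$ carries the discrete topology and $H\times T$ the product topology.

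\emph{Profiniteness.} Since $H=\Up(X)$ is profinite, write $H\cong\varprojlim_i H_i$ with each $H_i$ finite. Then
\[
 H\times T\cong\varprojlim_i (H_i\times T),
\]
an inverse limit of finite discrete residuated lattices, so $H\times T$ is profinite. Alternatively, $H\times T$ is a product of two Stone spaces, hence Stone, and the operations are continuous coordinatewise, so Theorem~\ref{thm:stone-profinite-rl} applies.

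\emph{Non-idempotency.} Choose $t\in T$ with $t^2\neq t$. Then
\[
 (1_H,t)^2=(1_H,t^2)\neq(1_H,t),
\]
so $H\times T$ is not idempotent. By Proposition~\ref{prop:idempotent-heyting} it is therefore not a Heyting algebra, which is what ``genuinely non-idempotent'' means here.

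\emph{Non-completion.} Suppose $H\times T$ were isomorphic, as a topological residuated lattice, to $\widehat A$ for some residuated lattice $A$. Since $T$ is a Heyting-invisible finite tag, Theorem~\ref{thm:tag-completion-reflection} would make $H$ the profinite completion of a Heyting algebra. This contradicts the choice of $H$ from the Bezhanishvili--Bezhanishvili--Moraschini--Stronkowski construction, cited as in Corollary~\ref{cor:noncompletion-rl}.

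The main point needing care is that the hypotheses of Theorem~\ref{thm:tag-completion-reflection} are met exactly. The isomorphism $\widehat A\cong H\times T$ must be topological, with the product topology as above. The theorem's proof also relies on the Heyting form of the intrinsic criterion, Theorem~\ref{thm:intrinsic-completion-criterion}, which goes through verbatim in the Heyting signature. Everything else is routine. Taking $T=\mathbf L_3$, which is a non-idempotent Heyting-invisible tag by Lemma~\ref{lem:L3-is-tag} (since $\alpha^2=0\neq\alpha$), shows the family is non-empty.
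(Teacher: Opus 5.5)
Your proposal is correct and follows the paper's proof: profiniteness comes from $H$ being profinite and $T$ finite, non-idempotence comes from the tag coordinate, and non-completion follows by contraposition from Theorem~\ref{thm:tag-completion-reflection} together with the choice of $H$. Your additions are the explicit witness $(1_H,t)$ and the remark that the isomorphism must be topological, as the reflection theorem requires; these only spell out what the paper's shorter proof leaves implicit.
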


\begin{proof}
The product \(H\times T\) is profinite because \(H\) is profinite and \(T\) is finite.  It is non-idempotent because \(T\) is non-idempotent.  If \(H\times T\) were a profinite completion of a residuated lattice, Theorem~\ref{thm:tag-completion-reflection} would imply that \(H\) is the profinite completion of a Heyting algebra, contradicting the choice of \(H\).
\end{proof}

\begin{corollary}[A genuinely non-idempotent non-completion example]
\label{cor:genuine-nonidempotent-noncompletion}
There exists a profinite residuated lattice which is not Heyting and is not isomorphic to the profinite completion of any residuated lattice.  More concretely, one may take
\[
 \Up(X)\times\mathbf L_3,
\]
where \(X\) is the image-finite poset supplied by the Heyting non-completion construction.
\end{corollary}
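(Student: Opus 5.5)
The plan is to obtain this as a direct specialization of Corollary~\ref{cor:tag-noncompletion-family} to the tag $T=\mathbf L_3$. Let $X$ be the image-finite poset supplied by the proof of \cite[Theorem~5.1]{BezhanishviliEtAl2021}, as in Corollary~\ref{cor:noncompletion-rl}. Then $H=\Up(X)$ is a profinite Heyting algebra that is not isomorphic to the profinite completion of any Heyting algebra. Put $P=H\times\mathbf L_3$ with the product topology, where $\mathbf L_3$ carries the discrete topology. Since $H$ is profinite and $\mathbf L_3$ is finite, $P$ is an inverse limit of the finite products $H/\theta\times\mathbf L_3$, so $P$ is a profinite residuated lattice. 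Alternatively, its underlying space is Stone, so Theorem~\ref{thm:stone-profinite-rl} applies.

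Next I will verify the two hypotheses on the tag. By Lemma~\ref{lem:L3-is-tag}, $\mathbf L_3$ is Heyting-invisible. It is non-idempotent because $\alpha^2=0\neq\alpha$. Hence $P$ is non-idempotent: $(1_H,\alpha)^2=(1_H,0)\neq(1_H,\alpha)$. By Proposition~\ref{prop:idempotent-heyting}, $P$ is therefore not a Heyting algebra in the sense of the paper's convention, i.e.\ not of the form $x\odot y=x\wedge y$.

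Finally, I will argue by contradiction. Suppose $P\cong\widehat A$ for some residuated lattice $A$. Theorem~\ref{thm:L3-completion-reflection}, which is Theorem~\ref{thm:tag-completion-reflection} with $T=\mathbf L_3$, then yields that $H$ is the profinite completion of a Heyting algebra. This contradicts the choice of $X$.

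The only step needing any care is the appeal to the cited construction. One must ensure that the object it provides is literally of the form $\Up(X)$, with its profinite topology, for an image-finite poset $X$, and not merely some profinite Heyting algebra. This is exactly how it was used in Corollary~\ref{cor:noncompletion-rl}, so I will cite that identification rather than reprove it. Everything else is a formal consequence of the results already established.
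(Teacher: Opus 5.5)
Your proposal is correct and is essentially the paper's proof. Both specialize Corollary~\ref{cor:tag-noncompletion-family} (equivalently Theorem~\ref{thm:L3-completion-reflection}) to the tag $T=\mathbf L_3$, which is Heyting-invisible by Lemma~\ref{lem:L3-is-tag}, and witness that the product is not Heyting by $(1,\alpha)^2=(1,0)\neq(1,\alpha)$. Your extra checks, on profiniteness of the product and on the cited object being of the form $\Up(X)$, are harmless; the latter is automatic anyway, since profinite Heyting algebras are exactly the upset algebras of image-finite posets.
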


\begin{proof}
Choose the same concrete Heyting non-completion algebra as in Corollary~\ref{cor:noncompletion-rl}: by the proof of \cite[Theorem~5.1]{BezhanishviliEtAl2021}, take an image-finite poset \(X\) such that
\[
 H:=\Up(X)
\]
is a profinite Heyting algebra not isomorphic to the profinite completion of any Heyting algebra.  Since \(\mathbf L_3\) is a non-idempotent Heyting-invisible finite tag by Lemma~\ref{lem:L3-is-tag}, Corollary~\ref{cor:tag-noncompletion-family} gives the result.  Explicitly, \(P=H\times\mathbf L_3\) is not Heyting because
\[
 (1,\alpha)^2=(1,\alpha^2)=(1,0)\ne(1,\alpha).
\]
\end{proof}

\begin{remark}
The role of \(\mathbf L_3\) is now seen as a special case of a general finite-tag mechanism.  It adds the smallest possible non-idempotent MV-component while remaining invisible to finite Heyting targets.  The proof uses factorization of finite quotients rather than a splitting of the product; in particular the tempting map \(h\mapsto(h,1)\) is irrelevant, and in the bounded language it is not a homomorphism because it does not preserve \(0\).
\end{remark}

The same transfer gives an exact variety-level statement for Heyting subvarieties viewed inside residuated lattices.

\begin{theorem}\label{thm:DHA-transfer}
Let $\mathcal V$ be a variety of Heyting algebras.  The following are equivalent.
\begin{enumerate}
\item Every profinite member of $\mathcal V$, regarded as a residuated lattice, is a profinite completion of some residuated lattice.
\item Every profinite member of $\mathcal V$ is a profinite completion of some Heyting algebra.
\item $\mathcal V\subseteq\DHA$.
\end{enumerate}
\end{theorem}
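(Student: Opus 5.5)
The plan is to obtain the statement by combining Theorem~\ref{thm:completion-transfer} with the Heyting-algebra theorem of Bezhanishvili--Bezhanishvili--Moraschini--Stronkowski \cite[Theorem~6.1]{BezhanishviliEtAl2021}. That result characterizes the Heyting varieties $\mathcal V$ in which every profinite member is a profinite completion of some Heyting algebra as exactly the subvarieties of $\DHA$. So the only genuinely new content is the equivalence (i)$\Leftrightarrow$(ii), and this follows memberwise from the completion transfer.

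\textbf{Step 1: (i)$\Leftrightarrow$(ii).} Fix a profinite $H\in\mathcal V$, regarded as a profinite residuated lattice with $\odot=\wedge$. Theorem~\ref{thm:completion-transfer} says that $H$ is a profinite completion of some residuated lattice iff it is a profinite completion of some Heyting algebra. Quantifying over all profinite members of $\mathcal V$ gives the equivalence of (i) and (ii).

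Two compatibility points need to be recorded here. First, the topological residuated-lattice structure on $H$ is the same Stone topology as the profinite Heyting structure, because the operations coincide. Second, ``profinite member of $\mathcal V$'' means the same thing in both signatures: an inverse limit of finite Heyting algebras in $\mathcal V$, which by Corollary~\ref{cor:intrinsic-filter-limit} is the same as being a Stone topological algebra lying in $\mathcal V$.

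\textbf{Step 2: (ii)$\Leftrightarrow$(iii).} Here I would cite the BBMS characterization directly. One caveat must be checked: their notion of ``profinite completion of some Heyting algebra'' should not be restricted to algebras in $\mathcal V$. If their theorem is phrased with the abstract algebra taken from $\mathcal V$, I would close the gap as follows.

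Suppose $H\cong\widehat B$ for a Heyting algebra $B$. By Lemma~\ref{lem:completion-quotient}, we may replace $B$ by $B/\mu_B$, so that $B$ embeds densely in $H$. Then $B$ is a subalgebra of $H\in\mathcal V$, and varieties are closed under subalgebras, so $B\in\mathcal V$. Hence the two readings of (ii) agree.

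\textbf{Main obstacle.} I expect the main difficulty to be this bookkeeping around the cited theorem: matching its exact hypotheses (varieties of Heyting algebras, profinite meaning inverse limits of finite members) with the formulation used here. The residuated-lattice side is already fully handled by Theorem~\ref{thm:completion-transfer} together with Lemmas~\ref{lem:completion-quotient} and~\ref{lem:idempotent-dense}.
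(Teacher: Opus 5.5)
Your proposal is correct and follows the paper's proof exactly. The paper obtains (i)$\Leftrightarrow$(ii) by applying Theorem~\ref{thm:completion-transfer} to each profinite member of $\mathcal V$, and (ii)$\Leftrightarrow$(iii) by citing \cite[Theorem~6.1]{BezhanishviliEtAl2021}. Your extra bookkeeping is valid and spells out a point the paper leaves implicit: the dense quotient $B/\mu_B\subseteq H$ lies in $\mathcal V$, so it does not matter whether the cited theorem takes the abstract algebra from $\mathcal V$.
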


\begin{proof}
The equivalence of (ii) and (iii) is Theorem 6.1 of \cite{BezhanishviliEtAl2021}.  The equivalence of (i) and (ii) is Theorem \ref{thm:completion-transfer}, applied objectwise to profinite Heyting algebras.
\end{proof}

\begin{corollary}\label{cor:representable-varieties-transfer}
Let $\mathcal V$ be a variety of Heyting algebras.  If one asks, inside all residuated lattices, which profinite Heyting members of $\mathcal V$ are forced to be profinite completions, the answer is still governed by the Heyting condition
\[
 \mathcal V\subseteq\DHA.
\]
Moreover, the representable Heyting varieties are exactly those satisfying
\[
 \mathcal V\subseteq D_n\cap\DHA
\]
for some positive integer $n$, as in \cite[Theorem~7.6]{BezhanishviliEtAl2021}.
\end{corollary}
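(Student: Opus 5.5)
The corollary has two parts, and I would prove them separately. The first part is a reformulation of Theorem~\ref{thm:DHA-transfer}. For a variety $\mathcal V$ of Heyting algebras, ``which profinite Heyting members of $\mathcal V$ are forced to be profinite completions inside all residuated lattices'' means condition (i) of that theorem. By Theorem~\ref{thm:completion-transfer}, a profinite Heyting algebra is a profinite completion of some residuated lattice exactly when it is a profinite completion of some Heyting algebra. So enlarging the ambient category from $\HA$ to $\RL$ neither adds nor removes completions among profinite Heyting algebras. Hence (i) holds if and only if (ii) holds, and by \cite[Theorem~6.1]{BezhanishviliEtAl2021} this happens if and only if $\mathcal V\subseteq\DHA$.

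I would point out that the argument is objectwise, so nothing beyond Theorem~\ref{thm:completion-transfer} is needed. The key ingredient is Lemma~\ref{lem:idempotent-dense}: any residually finite residuated lattice densely embedded in a profinite Heyting algebra is itself Heyting. Its finite-index congruences are the same in both signatures, because $\odot=\wedge$. Therefore the two inverse systems of finite quotients coincide, and the profinite completion is the same object in either category.

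The second part, about representable varieties, is a direct citation of \cite[Theorem~7.6]{BezhanishviliEtAl2021}. I would add a short justification that the notion transfers to residuated lattices. In that paper, a Heyting variety $\mathcal V$ is called representable in the Gr\"atzer/Esakia sense when the relevant class of posets is determined by $\mathcal V$. By Corollary~\ref{cor:classwise-rep}(v), together with Corollary~\ref{cor:ret-ha} and Proposition~\ref{prop:idempotent-heyting}, $\mathcal V$-representability computed inside residuated lattices coincides with the Heyting notion, because the reticulation of a Heyting algebra is its own lattice reduct. So the characterization $\mathcal V\subseteq D_n\cap\DHA$ carries over verbatim.

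I expect the main obstacle to be making sure that the cited notion of ``representable variety'' really is the poset-spectral one to which Corollary~\ref{cor:classwise-rep}(v) applies. If the cited notion is instead formulated through Esakia spaces or image-finite posets, I would first match it to Theorem~\ref{thm:gratz-rl}(iii). That step works because $\ClopUp$ of the Priestley dual recovers the lattice reduct, and for Heyting algebras this reduct determines the algebra.
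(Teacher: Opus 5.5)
Your proposal is correct and follows the paper's own route; the paper gives no separate proof. The first assertion is just Theorem~\ref{thm:DHA-transfer}, i.e.\ Theorem~\ref{thm:completion-transfer} applied objectwise together with \cite[Theorem~6.1]{BezhanishviliEtAl2021}, and the second is a direct citation of \cite[Theorem~7.6]{BezhanishviliEtAl2021}. Your added check is a harmless supplement that the statement as written does not require: $\mathcal V$-representability is unchanged when Heyting algebras are viewed as residuated lattices, since with $\odot=\wedge$ the residuated-lattice filters are exactly the lattice filters (cf.\ Corollary~\ref{cor:classwise-rep}(v)).
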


\begin{remark}
Theorem \ref{thm:completion-transfer} is the precise sense in which the Heyting $\DHA$ theorem survives inside residuated lattices: it survives only on the idempotent, i.e. Heyting, part of the theory.  It does not become a criterion for arbitrary residuated lattices.
\end{remark}

\section{Why the diamond-Heyting method does not generalize}\label{sec:why-dha-fails}

The results of \cite{BezhanishviliEtAl2021} give a precise answer for Heyting algebras: the relevant profinite-completion and representability phenomena are governed by diamond systems and by the variety $\DHA$.  Together with the earlier structure theory of profinite Heyting algebras \cite{BezhanishviliBezhanishvili2008,BezhanishviliMorandi2009}, this is the correct idempotent theory.  We now make explicit why it is not the general residuated-lattice answer.  Two different mechanisms are involved: finite forbidden posets stop being object-level obstructions, and reticulation forgets the nilpotent information that becomes decisive for completion questions.

\subsection{Forbidden Heyting configurations are not object-level obstructions}

The four finite posets $P_1,P_2,P_3,P_4$ used in \cite{BezhanishviliEtAl2021} axiomatize the variety $\DHA$ by the corresponding Jankov formulas.  They are obstructions in a Heyting-varietal theorem.  They are not object-level obstructions to being a profinite completion in residuated lattices.

\begin{proposition}\label{prop:forbidden-finite-not-obstruction}
Let $P$ be any finite poset.  There is a finite residuated lattice $A_P$ such that
\[
 \Spec_{\RL}(A_P)\cong P.
\]
Moreover, $A_P$ is profinite and is the profinite completion of itself.
\end{proposition}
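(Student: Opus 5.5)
We take $A_P=\Up(P)$, the finite Heyting algebra of upsets of $P$, regarded as an idempotent residuated lattice with $x\odot y=x\wedge y$; the implication is the one described in the proof of Lemma~\ref{lem:finite-godel-forest}. The argument has two parts: a spectral computation, and a purely topological remark about finite algebras.

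\emph{Spectrum.} By Proposition~\ref{prop:idempotent-heyting}, the reticulation map $\lambda_{A_P}$ is a bounded-lattice isomorphism, so $\mathcal L(A_P)\cong\Up(P)$. By Theorem~\ref{thm:muresan-spectrum}, $\Spec_{\RL}(A_P)\cong\Spec(\Up(P))$ as posets. As in the proof of Corollary~\ref{cor:finite-posets}, the assignment $p\mapsto F_p=\{U\in\Up(P):p\in U\}$ is an order isomorphism $P\to\Spec(\Up(P))$. Composing these gives $\Spec_{\RL}(A_P)\cong P$. Alternatively, one could check directly that the prime filters of the idempotent algebra $A_P$ are exactly its prime lattice filters, since filters coincide with lattice filters when $\odot=\wedge$.

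\emph{Profiniteness and self-completion.} Give $A_P$ the discrete topology. All operations are then continuous, and the space is finite, hence Stone. So $A_P$ is profinite, either by Theorem~\ref{thm:stone-profinite-rl} or trivially as the inverse limit of the one-object system $A_P$.

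For the completion claim, note that every congruence of $A_P$ has finite index, and the identity congruence $\Delta$ is among them. Hence the inverse system of finite quotients of $A_P$ has the terminal object $A_P/\Delta\cong A_P$, and $\widehat{A_P}\cong A_P$ via the canonical map $\eta_{A_P}$. This map is injective because $\mu_{A_P}=\Delta$, and surjective because the limit of a directed system with a least element $\Delta$ (under reverse inclusion) is that term. It is a homeomorphism since both sides are discrete.

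Alternatively, apply Theorem~\ref{thm:intrinsic-completion-criterion}(ii) with the dense subalgebra $A=P$ itself. Every congruence of a discrete algebra is clopen in $A_P^2$, so every finite-index congruence is induced by a clopen congruence.

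\emph{Main obstacle.} The only nontrivial point is checking that the isomorphism $\widehat{A_P}\cong A_P$ is topological and not merely algebraic. This holds because the inverse-limit topology on a limit with a terminal finite discrete stage is discrete. Nothing here depends on the choice of $P$, which is the point of the proposition: no finite poset can serve as an object-level obstruction to being a profinite completion.
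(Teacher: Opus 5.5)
Your proposal is correct and follows the paper's proof: take $A_P=\Up(P)$ as an idempotent residuated lattice, identify its spectrum with $P$ via $p\mapsto F_p$ as in Corollary~\ref{cor:finite-posets}, and observe that a finite discrete algebra is profinite and is its own profinite completion. The paper uses your ``alternative'' observation that $\Spec_{\RL}(A_P)=\Spec(A_P)$ holds directly when $\odot=\wedge$, and there is one notational slip: when you invoke Theorem~\ref{thm:intrinsic-completion-criterion}, the dense subalgebra should be $A=A_P$, not the poset $P$.
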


\begin{proof}
Take $A_P=\Up(P)$, the finite bounded distributive lattice of upsets of $P$.  Since it is finite, it is a Heyting algebra and hence a residuated lattice with $\odot=\wedge$.  As in the proof of Corollary \ref{cor:finite-posets}, $P\cong\Spec(A_P)=\Spec_{\RL}(A_P)$.  Since $A_P$ is finite and discrete, it is profinite and its profinite completion is itself.
\end{proof}

\begin{corollary}\label{cor:Pi-not-RL-obstructions}
Each of the finite posets $P_1,P_2,P_3,P_4$ from the diamond-Heyting theorem occurs as the spectrum of a finite residuated lattice that is a profinite completion.  Therefore the forbidden-poset mechanism of $\DHA$ cannot be an object-level criterion for profinite completions of arbitrary residuated lattices.
\end{corollary}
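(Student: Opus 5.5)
The plan is to apply Proposition~\ref{prop:forbidden-finite-not-obstruction} to each of the four posets. The statement concerns the finite posets $P_1,P_2,P_3,P_4$ of \cite{BezhanishviliEtAl2021}, and the only property of them I need is that they are finite. For each $i\in\{1,2,3,4\}$ I set $A_{P_i}=\Up(P_i)$, regarded as a finite Heyting algebra and hence as a finite residuated lattice with $\odot=\wedge$.

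Proposition~\ref{prop:forbidden-finite-not-obstruction} gives $\Spec_{\RL}(A_{P_i})\cong P_i$. The underlying computation is $p\mapsto F_p$ from Corollary~\ref{cor:finite-posets}, combined with Proposition~\ref{prop:idempotent-heyting} to identify $\Spec_{\RL}$ with $\Spec$ of the lattice reduct. The same proposition says that $A_{P_i}$, carrying the discrete topology, is profinite and equal to its own profinite completion. The short reason is this: every finite quotient of $A_{P_i}$ is one of the terms of its inverse system, the identity congruence has finite index, and so the inverse limit is attained at the trivial congruence $\Delta$. Hence $\widehat{A_{P_i}}\cong A_{P_i}$ as topological residuated lattices. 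This establishes the first sentence of the corollary.

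For the second sentence I read ``object-level criterion'' as a putative rule of the following form: a profinite residuated lattice whose prime spectrum contains (or realizes) a forbidden configuration $P_i$ is not a profinite completion. Each $A_{P_i}$ is a counterexample to such a rule, since its spectrum is exactly $P_i$ and yet it is a profinite completion. I will also note that $A_{P_i}$ is finite, so it trivially lies in a variety generated by finite algebras. Consequently the $P_i$ can act only at the varietal level, through the Jankov formulas that axiomatize $\DHA$, and not as obstructions read off the spectrum of an individual algebra.

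None of these steps involves real difficulty. The one point that calls for care is the interpretation of ``cannot be an object-level criterion''. I will state that interpretation explicitly, so that the corollary is understood as refuting a spectrum-based obstruction for individual algebras, rather than as saying anything against the Heyting varietal theorem itself.
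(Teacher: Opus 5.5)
Your proposal is correct and follows the paper's route: the corollary is just Proposition~\ref{prop:forbidden-finite-not-obstruction} applied to each finite $P_i$, where $A_{P_i}=\Up(P_i)$ realizes $P_i$ as its spectrum and, being finite and discrete, is its own profinite completion. Your explicit reading of ``object-level criterion'' as a spectrum-based obstruction for individual algebras, as opposed to the varietal role of the Jankov formulas, is the intended one.
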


\subsection{Reticulation forgets non-idempotent multiplication}

The second obstruction is more structural: reticulation records the radical distributive-lattice shadow of a residuated lattice, not the full monoidal operation.

\begin{example}\label{ex:L3}
Let $\mathbf L_3=\{0,a,1\}$ be the three-element MV-chain.  Then
\[
 a\odot a=0\ne a,
\]
so $\mathbf L_3$ is not a Heyting algebra.  Nevertheless, Lemma \ref{lem:finite-mv-chain-ret} gives
\[
 \mathcal L(\mathbf L_3)\cong\mathbf 2.
\]
Thus $\mathbf L_3$ and the two-element Boolean algebra have the same reticulation and the same one-point prime spectrum, but they are not isomorphic as residuated lattices.
\end{example}

\begin{proposition}\label{prop:ret-not-full}
The Priestley spectrum of $\mathcal L(A)$ does not determine the residuated-lattice structure of $A$ up to isomorphism, even among finite profinite residuated lattices.
\end{proposition}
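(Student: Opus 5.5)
The plan is to exhibit two explicit finite residuated lattices with isomorphic reticulations, hence order-homeomorphic Priestley spectra by Theorem~\ref{thm:muresan-spectrum}, but which are not isomorphic as residuated lattices. Example~\ref{ex:L3} already supplies exactly such a pair: the two-element Boolean algebra $\mathbf 2$ and the three-element MV-chain $\mathbf L_3$. So the proof is essentially a packaging of that example, with a check that both algebras fall in the class named in the statement.

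First I will recall the reticulations. By Lemma~\ref{lem:finite-mv-chain-ret} with $n=2$, $\mathcal L(\mathbf L_3)\cong\mathbf 2$. Since $\mathbf 2$ is idempotent, Proposition~\ref{prop:idempotent-heyting} gives $\mathcal L(\mathbf 2)\cong\mathbf 2$. Hence $\mathcal L(\mathbf 2)\cong\mathcal L(\mathbf L_3)$ as bounded distributive lattices. Their Priestley spectra are therefore both the one-point Priestley space; equivalently, both $\Spec_{\RL}(\mathbf 2)$ and $\Spec_{\RL}(\mathbf L_3)$ consist of the single prime filter $\{1\}$.

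Second, I will note the non-isomorphism: $\mathbf 2$ has two elements and $\mathbf L_3$ has three. If one wants an invariant that survives beyond cardinality, observe that $\mathbf L_3$ is not idempotent, since $a\odot a=0\ne a$, whereas $\mathbf 2$ is. Third, both algebras are finite and carry the discrete topology, so they are profinite, and indeed each is its own profinite completion, as in the argument of Proposition~\ref{prop:forbidden-finite-not-obstruction}. This places the counterexample inside the class of finite profinite residuated lattices, as the statement requires.

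There is no real obstacle. The only point needing care is to make the phrase ``does not determine'' precise: identical Priestley spectra, that is, isomorphic reticulations, together with non-isomorphic algebras. If desired, I would also strengthen the example by taking products: for any finite Heyting algebra $D$, Proposition~\ref{prop:products-ret} shows that $D$ and $D\times\mathbf L_3$ have the reticulations $D$ and $D\times\mathbf 2$. To get an equal-spectrum pair in general, one can instead compare $D\times\mathbf 2$ with $D\times\mathbf L_3$, which are non-isomorphic by cardinality. This yields a whole family of such examples.
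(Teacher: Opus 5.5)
Your proposal is correct and follows the paper's own proof: the same pair $\mathbf 2$ and $\mathbf L_3$, with both reticulations isomorphic to $\mathbf 2$ (so the same one-point spectrum), distinguished by the failure of idempotency in $\mathbf L_3$ (or simply by cardinality). Your extra check that both are finite discrete and hence profinite, and your product family $D\times\mathbf 2$ versus $D\times\mathbf L_3$, are both valid but go beyond what the paper's proof needs.
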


\begin{proof}
The two finite residuated lattices $\mathbf L_3$ and $\mathbf 2$ have isomorphic reticulations by Example \ref{ex:L3}.  Hence their reticulation spectra are isomorphic.  But they are not isomorphic as residuated lattices, because $\mathbf 2$ is idempotent while $\mathbf L_3$ is not.
\end{proof}

\begin{remark}\label{rem:precise-difference}
This is the precise methodological difference.  In Heyting algebras, reticulation is essentially the algebra itself, and Esakia duality adds enough order-topological structure to recover implication.  In arbitrary residuated lattices, reticulation collapses nilpotent and radical information.  Therefore a general residuated-lattice Gr\"atzer problem must first pass through $\Ret(\mathcal K)$; the Heyting diamond-system analysis applies only when $\odot=\wedge$.

\end{remark}

\subsection{The product obstruction beyond the Heyting fragment}

Corollary \ref{cor:tag-noncompletion-family} gives a stronger obstruction than the idempotent transfer theorem alone.  It shows that non-completion is not confined to the Heyting subcategory and is not tied to the particular algebra \(\mathbf L_3\).  If \(T\) is any non-idempotent Heyting-invisible finite tag, the product \(H\times T\) has the same Heyting obstruction in its first coordinate, while the tag coordinate forces genuine non-idempotence.  The factorization theorem \ref{thm:tag-factorization} is the mechanism that keeps this obstruction visible to profinite completions: every finite Heyting quotient of a dense algebra mapping into \(H\times T\) must ignore the \(T\)-coordinate.

This construction also explains why reticulation alone cannot settle completion questions.  The reticulation of \(\mathbf L_3\) is \(\mathbf 2\), so the reticulation of \(H\times\mathbf L_3\) is isomorphic to the reticulation of \(H\times\mathbf 2\).  The nilpotent equation \(\alpha^2=0\), however, is crucial for the factorization argument.  Thus reticulation is the correct invariant for the Gr\"atzer spectrum problem, while profinite-completion obstructions may require additional finite algebraic tags.  This is consistent with the MV-completion literature: profinite MV-objects have a rich product structure \cite{Nganou2015}, but their reticulations can collapse to Boolean data in the finite-valued cases considered here.

\section{Conclusion}

The results of the paper can be summarized around the three profinite questions posed in the introduction.  First, Stone profiniteness for topological residuated lattices is purely topological but not merely reduct-theoretic.  The semiring reduct supplies finite clopen quotients, yet the residual structure is recovered through the cofinal lifting
\[
 \rho \longmapsto [1]_{\rho}\longmapsto \theta_{[1]_{\rho}}\subseteq \rho.
\]
Thus the inverse-limit representation is genuinely internal to residuated lattices: the finite quotients are controlled by clopen filters.

Second, the residuated-lattice form of Gr\"atzer's representation problem is governed by reticulation.  The spectrum of a residuated lattice is the Priestley spectrum of its reticulation, and classwise representability is therefore controlled by the reticulation image $\Ret(\mathcal K)$.  The sharpness theorem shows that this reduction cannot be uniformly replaced by a class-independent finite forbidden-poset description: arbitrary isomorphism-closed classes of finite bounded distributive lattices occur as finite reticulation images.  The explicit finite calculations then show that the invariant has concrete algebraic content.  Arbitrary finite posets occur for finite residuated lattices, whereas finite MTL- and BL-representable posets are precisely finite root systems, and finite-valued MV-varieties collapse at the reticulation level to Boolean data.  The $n$-potent theorem further shows that in power-bounded varieties the reticulation order is governed by a single inequality $a^n\le b$, linking the spectrum problem with the DPC/EDPC theory of substructural logics.

Third, the profinite-completion problem remains non-trivial in the full category of residuated lattices.  We do not claim a classification of all profinite residuated lattices that are completions; rather, the intrinsic completion criterion identifies the obstruction as a mismatch between the finite-index congruences of a dense abstract subalgebra and the clopen congruences induced from the profinite algebra, placing the problem in the broader universal-algebraic tradition of finite congruence control while keeping the genuinely completion-theoretic nature of the question.  The positive $n$-potent criterion identifies a substantial situation in which this mismatch disappears: in EDPC varieties, if the finite-index congruences of the dense subalgebra are finitely generated, then the ambient profinite algebra is its profinite completion.  Thus DPC/EDPC and CEP provide strong congruence control, but not a blanket completion theorem.  The Heyting non-completion obstruction does not disappear after passing to residuated lattices; in fact, the finite-tag reflection theorem turns it into a family of genuinely non-idempotent obstructions.  The key point is not reticulation, which forgets the nilpotent information in $\mathbf L_3$, but finite-target factorization: homomorphisms from $H\times T$ to finite Heyting algebras must ignore the tag coordinate whenever $T$ is Heyting-invisible.

The resulting picture is that the Heyting theorem is exactly right on the idempotent fragment, but it is not the general residuated-lattice mechanism.  Beyond Heyting algebras, reticulation is the correct invariant for spectrum representation, while profinite completion requires additional control of finite-index congruence topologies and finite non-idempotent factors.  Two natural directions remain open.  The first is to compute $\Ret(\mathcal K)$ for further algebraically significant varieties beyond the finite MTL/BL and finite-valued MV cases treated here.  The recent solution of the MV-spectrum problem in size $\aleph_1$ by Plo\v{s}\v{c}ica and Wehrung shows that, for infinite and non-locally finite classes, such computations can require substantial additional representation theory; Theorem~\ref{thm:reticulation-image-sharpness} explains why such class-sensitive information cannot be avoided in general.  The second is to classify finite Heyting-invisible tags and determine how their internal algebraic features affect the resulting families of genuinely non-idempotent non-completion examples.  A related completion-theoretic direction is to identify varieties or dense subalgebras in which finite-index congruences are automatically finitely generated, since Theorem~\ref{thm:positive-completion-npotent} then turns EDPC into a positive profinite-completion theorem.

\end{document}